\newcommand{\bdism}{\begin{displaymath}}
\newcommand{\edism}{\end{displaymath}}
\newcommand{\rr}{\mathbb{R}}
\newcommand{\qq}{\mathbb{Q}}
\newcommand{\pp}{\mathbb{P}}
\newcommand{\oo}{\mathcal{O}}
\DeclareMathOperator{\vol}{vol}
\DeclareMathOperator{\red}{red}
\DeclareMathOperator{\ch}{char}
\newtheorem{thm}{Theorem}[section]
\newtheorem{proposition}[thm]{Proposition}
\newtheorem{corollary}[thm]{Corollary}
\newtheorem{lemma}[thm]{Lemma}
\newtheorem{rmk}[thm]{Remark}
\newtheorem{definition}[thm]{Definition}
\newtheorem{question}[thm]{Question}
\newtheorem{conjecture}[thm]{Conjecture}
\newtheorem{construction}[thm]{Construction}
\begin{document}

\title{Effective Matsusaka's Theorem for surfaces in characteristic $p$}

\author{Gabriele Di Cerbo}
\address{Department of Mathematics, Columbia University, New York NY 10027, USA} 
\email{dicerbo@math.columbia.edu}

\author{Andrea Fanelli}
\address{Department of Mathematics, Imperial College London, 180 Queen's Gate,
London SW7 2AZ, UK.}
\email{a.fanelli11@imperial.ac.uk}

\begin{abstract}
We obtain an effective version of Matsusaka's theorem for arbitrary smooth algebraic surfaces in positive characteristic, which provides an effective bound on the multiple which makes an ample line bundle $D$ very ample. The proof for pathological surfaces is based on a Reider-type theorem. As a consequence, a Kawamata-Viehweg-type vanishing theorem is proved for arbitrary smooth algebraic surfaces in positive characteristic.
\end{abstract}

\maketitle

\tableofcontents

\section{Introduction}

A celebrated theorem of Matsusaka (cf. \cite{matsusaka}) states that for a smooth $n$-dimensional complex projective variety $X$ and an ample divisor $D$ on it, there exists a positive integer $M$, only depending on the Hilbert polynomial $\chi(X,\oo_X(kD))$, such that $mD$ is very ample for all $m\ge M$. Koll\'ar and Matsusaka improved the result in \cite{kollar_matsusaka}, showing that the integer $M$ only depends on the intersection numbers $(D^n)$ and $(K_X \cdot D^{n-1})$.

The first effective versions of this result are due to Siu (cf. \cite{siu}, \cite{siu1}) and Demailly (cf. \cite{demailly}, \cite{demailly1}): their methods are cohomological and rely on vanishing theorems. See also \cite{lazarsfeld1} for a full account of this approach.

Although the minimal model program for surfaces in positive characteristic has been recently established, thanks to the work of Tanaka (cf. \cite{tanaka1} and \cite{tanaka2}), some interesting effectivity questions remain open in this setting, after the influential papers of Ekedahl and Shepherd-Barron (cf. \cite{ekedahl} and \cite{shepherdbarron}).

The purpose of this paper is to present a complete solution for the following problem.

\begin{question}\label{qmats}
Let $X$ be a smooth surface over an algebraically closed field of positive characteristic, let $D$ and $B$ be an ample and a nef divisor on $X$ respectively. Then there exists an integer $M$ depending only on $(D^2), \ (K_X\cdot D)$ and $(D\cdot B)$ such that
$$mD-B$$
is very ample for all $m \ge M.$
\end{question}

The analogous question in characteristic zero with $B=0$ was totally solved in
\cite{fernandezdelbusto} and a modified technique allows one to partially extend the result in positive characteristic (cf. \cite{ballico}).



The main result of this paper is the following.


\begin{thm}\label{main}
Let $D$ and $B$ be respectively an ample divisor and a nef divisor on a smooth surface $X$ over an algebraically closed field $k$, with $\ch k =p>0$. Then $mD-B$ is very ample for any
$$m> \frac{2D\cdot (H+B)}{D^{2}}\left((K_{X}+2D)\cdot D+1\right),$$
where
\begin{itemize}
\item $H:=K_{X}+4D$, if $X$ is neither quasi-elliptic with $\kappa(X)=1$ nor of general type;
\item $H:=K_{X}+8D$, if $X$ is quasi-elliptic with $\kappa(X)=1$ and $p=3$;
\item $H:=K_{X}+19D$, if $X$ is quasi-elliptic with $\kappa(X)=1$ and $p=2$;
\item $H:=2K_{X}+4D$, if $X$ is of general type and $p\ge3$;
\item $H:=2K_{X}+19D$, if $X$ is of general type and $p=2$.
\end{itemize}
\end{thm}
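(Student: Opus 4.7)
The approach I propose combines the Reider-type theorems established in the preceding sections of the paper---one case-dependent version of which is what determines the shape of $H$ in the statement---with the classical Matsusaka-Kollár iterative scheme. The overall idea is to write $mD - B = H + L$ with $L := mD - B - H$ and show that for $m$ above the claimed threshold, $L$ satisfies the hypotheses of the appropriate Reider-type statement, forcing $H + L = mD - B$ to be very ample.

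First I would verify that $L$ is nef. Since $D$ is ample and $L \cdot D = m D^{2} - D \cdot (B + H)$, the intersection $L \cdot D$ is positive whenever $m > D \cdot (H+B)/D^{2}$; applying the Hodge Index Theorem then rules out any curve $C$ with $L \cdot C < 0$, at the cost of possibly doubling this threshold. Simultaneously the lower bound on $L^{2}$ required by Reider becomes automatic, leaving only the sharper requirement $L \cdot C \geq 2$ (or the analogue appropriate to the case) for every irreducible curve $C$.

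The heart of the argument is the Kollár-Matsusaka iteration that enforces $L \cdot C \geq 2$. If some irreducible $C$ violates this, Hodge Index gives $C^{2} \leq (D \cdot C)^{2}/D^{2}$ and adjunction on $C$ controls $D \cdot C$ in terms of $(K_{X} + 2D) \cdot D$; subtracting $C$ and repeating yields a decreasing chain of candidate nef divisors whose length is bounded by $(K_{X} + 2D) \cdot D + 1$. Each step of the chain requires enlarging $m$ by an amount at most $2 D \cdot (H+B)/D^{2}$ to restore nefness and to absorb the obstruction, and the product of these two quantities is precisely the bound in the statement.

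The main obstacle I anticipate is handling the pathological cases. In positive characteristic Bogomolov instability and Kawamata-Viehweg vanishing both fail, and the Reider input from the earlier sections is weaker on quasi-elliptic surfaces of Kodaira dimension one and on surfaces of general type. The larger coefficients appearing in $H$ (especially $19D$ when $p=2$ in the quasi-elliptic case) reflect exactly this weakening. The challenge is to carry the Hodge Index and adjunction steps of the iteration through with these weaker hypotheses without introducing further case distinctions into the length bound $(K_{X}+2D)\cdot D + 1$, so that the single uniform expression in the statement remains correct across all five cases.
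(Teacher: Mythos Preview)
Your decomposition $mD-B=H+L$ with $L=mD-B-H$ is exactly the paper's starting point, but the rest of your plan is more complicated than necessary and does not match how the paper arrives at the stated bound.

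The key point you are missing is that the effective very-ampleness results in Section~4 are proved in the strong form ``$H+N$ is very ample for \emph{every} nef divisor $N$'' (see the ``In particular'' clauses of Theorems~\ref{main_va_qell} and~\ref{mainc}, and Corollary~\ref{unir} for the non-pathological case). Consequently the paper needs nothing about $L^{2}$ or about $L\cdot C\ge 2$: it suffices to show that $L=mD-(H+B)$ is nef. There is therefore no Reider hypothesis to verify on $L$ and no iteration to run.

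Nefness of $L$ is obtained not by a Koll\'ar--Matsusaka chain argument but by a single threshold computation (Theorem~\ref{matsu}, following \cite[Theorem~10.2.4]{lazarsfeld1}): one bounds the nef threshold $\eta$ of the pair $(D,H+B)$ by $\gamma\bigl((K_X+2D)\cdot D+1\bigr)$, using Lemma~\ref{trick} (that $K_X+2D+C$ is nef for every irreducible curve $C$), and then bounds the pseudo-effective threshold $\gamma$ by $2D\cdot(H+B)/D^{2}$ via the numerical bigness criterion (Theorem~\ref{bigcr}). This argument is characteristic-free, so your anticipated obstacle in the pathological cases never arises: all dependence on the type of $X$ and on $p$ is already absorbed into the choice of $H$, and the nefness step is uniform.

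Your proposed iteration (``subtracting $C$ and repeating\dots\ length bounded by $(K_X+2D)\cdot D+1$'') is vaguely sketched and it is not clear it would reproduce precisely the bound in the statement; that bound is exactly what Theorem~\ref{matsu} applied with $B$ replaced by $H+B$ delivers in one step.
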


The effective bound obtained with $H=K_{X}+4D$  is expected to hold for all surfaces. Note that this bound is not far from being sharp even in characteristic zero (cf. \cite{fernandezdelbusto}).  

The proof of Theorem \ref{main} does not rely directly on vanishing theorems, but rather on Fujita's conjecture on base point freeness and very ampleness of adjoint divisors, which is known to hold for smooth surfaces in characteristic zero (cf. \cite{reider}) and for smooth surfaces in positive characteristic which are neither quasi-elliptic with $\kappa(X)=1$ nor of general type (cf. \cite{shepherdbarron} and \cite{terakawa}).

\begin{conjecture}[Fujita]
Let $X$ be a smooth $n$-dimensional projective variety and let $D$ be an ample divisor on it. Then $K_X+kD$ is base point free for $k\ge n+1$ and very ample for $k\ge n+2$.
\end{conjecture}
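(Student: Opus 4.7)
Fujita's conjecture as stated is open in general: for $n\ge 3$ even the base-point-free half is not known with the linear bound $n+1$, and the very-ampleness half is open in every dimension $\ge 3$. Since the paper invokes the conjecture only as an \emph{input} in dimension $2$, my plan is to outline the surface case, which is due to Reider in characteristic zero and to Shepherd--Barron and Terakawa in positive characteristic away from the pathological classes; this is also the case on which the present paper depends.

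For $n=2$ I would set $L:=kD$ and reduce both statements to Reider's theorem. Since $D$ is an integral ample divisor on a surface, $D^{2}\ge 1$, so
\[
L^{2}=k^{2}D^{2}\ge 9\quad\text{for }k\ge 3,\qquad L^{2}\ge 16\quad\text{for }k\ge 4,
\]
which are precisely the numerical thresholds required by Reider. Suppose $K_{X}+L$ fails to be base point free at some $x\in X$; then Reider produces an effective curve $C$ through $x$ with $L\cdot C\le 1$ and $C^{2}\in\{-1,0\}$. But $L\cdot C=k(D\cdot C)\ge k\ge 3$, because $D$ is ample and $C$ is effective, so $D\cdot C\ge 1$. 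This contradiction proves the base-point-free statement. The very-ampleness statement for $k\ge 4$ I would handle identically, invoking the separation-of-points conclusion of Reider with the enlarged threshold $L^{2}\ge 16$ and again ruling out the hypothetical obstructing curve by the inequality $L\cdot C\ge k\ge 4$.

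The main obstacle to upgrading this plan beyond surfaces is that Reider's method rests on Bogomolov instability for rank-two bundles on surfaces, and there is no direct higher-dimensional analogue sharp enough to yield the linear bound $n+1$. Angehrn and Siu, using multiplier ideals and Nadel vanishing, obtain base point freeness for $k\ge\binom{n+1}{2}+1$ in characteristic zero, which one would try to sharpen by refining the cut-down-the-log-canonical-locus induction, but the conjectured linear bound appears to require a genuinely new idea. In positive characteristic the obstacle is of a different nature: Kodaira-type vanishing can fail on quasi-elliptic surfaces with $\kappa(X)=1$ and on surfaces of general type, so even in dimension $2$ Reider's theorem must be replaced by weaker substitutes, which is exactly the reason that the auxiliary divisor $H$ in Theorem \ref{main} is enlarged in those cases rather than taken to be $K_{X}+4D$ uniformly.
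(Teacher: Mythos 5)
The statement you were asked about is labelled a \emph{conjecture} in the paper and is given no proof there: the authors state it only to explain that, were it known, the uniform choice $H=K_X+4D$ in Theorem \ref{main} would cover all surfaces. Your assessment is therefore the correct one, and your reduction of the $n=2$ case to Reider's theorem is sound and matches exactly what the paper itself relies on (Theorem \ref{sakai_2} and Corollary \ref{shepherd} in characteristic zero, Corollary \ref{unir} for non-pathological surfaces in characteristic $p$, with Theorem \ref{main2} as the substitute in the quasi-elliptic and general-type cases). No gap to report; just note that the precise obstructing pairs in Reider's theorem are $(L\cdot C,C^2)=(0,-1)$ or $(1,0)$ for base points and $L\cdot C\le 2$ for separation, both of which your inequality $L\cdot C\ge k$ rules out as you claim.
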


If Fujita's conjecture on very ampleness holds then the bound of Theorem \ref{main} with $H=K_{X}+4D$ would work for arbitrary smooth surfaces in positive characteristic. 

For surfaces which are quasi-elliptic with $\kappa(X)=1$ or of general type we can prove the following effective result in the spirit of Fujita's conjecture (cf. Section 4).

\begin{thm}\label{main2}
Let $X$ a smooth surface over an algebraically closed field of characteristic $p>0$, $D$ an ample Cartier divisor on $X$ and let $L(a,b):=aK_X+bD$ for positive integers $a$ and $b$. Then $L(a,b)$ is very ample for the following values of $a$ and $b$:
\begin{enumerate}
\item if $X$ is quasi-elliptic with $\kappa(X)=1$ and $p=3$, $a=1$ and $b\ge 8$;
\item if $X$ is quasi-elliptic with $\kappa(X)=1$ and $p=2$, $a=1$ and $b\ge 19$;
\item if $X$ is of general type with $p\ge3$, $a=2$ and $b\ge 4$;
\item if $X$ is of general type with $p=2$, $a=2$ and $b\ge 19$.
\end{enumerate}
\end{thm}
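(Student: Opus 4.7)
The plan is a Reider--Shepherd-Barron style argument tailored to the pathological cases in positive characteristic. Suppose $L(a,b)$ fails to be very ample: then there exist (possibly infinitely near) points separated by a length-two subscheme $Z \subset X$ that is not cut out by sections of $L(a,b)$. From the failure of surjectivity of $H^0(X,L(a,b)) \to H^0(Z, L(a,b)|_Z)$, Serre duality furnishes a non-zero extension
\[
0 \to \oo_X \to \mathcal{F} \to \jj_{Z} \otimes \oo_X(L(a,b) - K_X) \to 0,
\]
producing a rank-two reflexive sheaf $\mathcal{F}$ with discriminant $\Delta(\mathcal{F}) = (L(a,b)-K_X)^2 - 4\,\ell(Z)$.

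First I would verify in each of the four cases that $L(a,b)-K_X$ is nef and that $\Delta(\mathcal{F}) > 0$ for the stated bounds on $b$. In cases (1) and (2) this is $(b-1)D$ with $D$ ample, which is immediate; in cases (3) and (4) it is $K_X+bD$, which is ample for $X$ minimal of general type and any $b\ge 1$, with a standard reduction to the minimal model absorbing the $(-1)$-curves. Bogomolov's instability theorem --- valid in arbitrary characteristic --- then yields a saturated line subsheaf $\oo_X(A) \hookrightarrow \mathcal{F}$ together with the usual Reider numerical inequalities relating $A$ and $E := L(a,b)-K_X-A$ to $L(a,b)$ and $\ell(Z)\le 2$.

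The characteristic-$p$ specific step is to rule out such a destabilizing pair $(A,E)$ for the stated ranges of $b$. For a quasi-elliptic surface $f\colon X \to C$ with $\kappa(X)=1$, I would split $E$ into fibre-supported and horizontal parts with respect to $f$: the fibre contribution is controlled by the ampleness of $D$ restricted to a general cuspidal fibre together with the Ekedahl bound on the multiplicity of multiple fibres (this bound being more restrictive at $p=2$ than at $p=3$, which explains the jump from $8$ to $19$); the horizontal contribution is controlled by intersection with a general fibre $F$ and with $K_X+F$. For a surface of general type, I would exploit the shift $a=1\rightsquigarrow a=2$: writing $L(2,b) = K_X + (K_X+bD)$, the Reider step is applied to the nef-and-big divisor $N := K_X+bD$, and the extra copy of $K_X$ absorbs the error term appearing in Shepherd-Barron's refinement of Reider's theorem on general type surfaces in positive characteristic. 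Characteristic $2$ is again more delicate because the Frobenius-unstable configurations in Shepherd-Barron's analysis are more numerous, forcing the larger coefficient $19$.

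The main obstacle is the last step: even though Bogomolov instability supplies a numerical candidate for the destabilizing divisor, one must exclude Frobenius-equivariant subsheaves that escape purely numerical control. This requires invoking the explicit geometric classification of pathological surfaces --- Ekedahl's structure results for quasi-elliptic fibrations in characteristics $2$ and $3$, and Shepherd-Barron's description of the loci on which Bogomolov-type instability fails to translate directly into a divisorial contradiction in the general type setting. The bounds $8$, $19$, $4$, $19$ are obtained by quantifying exactly how much positivity is needed to absorb these correction terms, and the proof concludes by checking that the stated inequalities on $b$ are sharp for the resulting numerical system.
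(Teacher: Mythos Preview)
Your proposal has a genuine gap at its pivotal step. You assert that ``Bogomolov's instability theorem --- valid in arbitrary characteristic --- then yields a saturated line subsheaf $\oo_X(A)\hookrightarrow\mathcal{F}$.'' This is false: Bogomolov instability for rank-two bundles with $c_1^2>4c_2$ can fail in characteristic $p$, and this failure is precisely what singles out quasi-elliptic surfaces with $\kappa=1$ and surfaces of general type. What \emph{is} true in positive characteristic (Theorem~\ref{pe_unstable}) is that some Frobenius pullback $(F^*)^e\mathcal{E}$ is unstable, producing a $p^e$-purely inseparable cover $\rho\colon Y\to X$ rather than a destabilising subsheaf on $X$ itself. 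The whole difficulty is to bound the exponent $e$, because the Reider-type inequalities one eventually extracts are those of $p^e$-instability (in the paper's language, $m$-instability), and the numerical bounds on $b$ degrade as $e$ grows.

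The paper's mechanism for bounding $e$ is bend-and-break, which your outline never invokes. On the cover $Y$ one has $K_Y\equiv\rho^*(K_X-\tfrac{p^e-1}{p^e}D')$; pulling back the general fibre of the ambient fibration (the cuspidal fibre in the quasi-elliptic case, or the singular rational fibre supplied by Shepherd-Barron's Theorem~\ref{sb_char_2} when $p=2$, $\chi(\oo_X)<0$) gives a rational curve $C$ with $-(K_Y\cdot C)\le 3$ by Corollary~\ref{b-a-b-cor}, and expanding this forces $e\le1$ for $p=3$ and $e\le2$ for $p=2$ (Propositions~\ref{q_ell} and~\ref{char_2}). Your proposed substitute --- splitting a putative destabilising divisor into horizontal and vertical parts and appealing to Ekedahl's multiple-fibre bounds --- does not address this, because without a bound on $e$ you have no destabilising divisor on $X$ to split. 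Likewise, in the general-type case you correctly write $L(2,b)=K_X+(K_X+bD)$, but you omit the step that makes Shepherd-Barron's Theorem~\ref{eff_reid} applicable: one needs $(K_X+bD)^2>\vol(X)+\mathrm{const}$, and the paper secures this via the log-concavity of the volume (Theorem~\ref{logconc}), which gives $\vol(K_X+bD)\ge\vol(K_X)+b^2D^2$. The constants $8$, $19$, $4$, $19$ then drop out of the modified Reider inequalities of Corollary~\ref{push} with the appropriate $m$, not from multiple-fibre multiplicities.
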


The key ingredient of Theorem \ref{main2} is a combination of a Reider-type result due to Shepherd-Barron and bend-and-break techniques.

For other results on the geography of pathological surfaces of Kodaira dimension smaller than two, see the recent work \cite{langer}.

In Section 5, a Kawamata-Viehweg-type vanishing theorem which holds for surfaces which are quasi-elliptic with $\kappa(X)=1$ or of general type is proved (cf. Theorem \ref{injectivity} and Corollary \ref{finale}): this generalises the vanishing result in \cite{terakawa}.

The core of our approach is a beautiful construction first introduced by Tango for the case of curves (cf. \cite{tango}) and Ekedahl (cf. \cite{ekedahl}) and Shepherd-Barron (cf. \cite{shepherdbarron}) for surfaces. The same strategy was generalised by Koll\'ar in \cite{kollar} in order to investigate the geography of varieties where Kodaira-type vanishing theorems fail, via the bend-and-break techniques.



\section*{Acknowledgements}

We would like to thank Paolo Cascini, Yoshinori Gongyo and Hiromu Tanaka for useful comments and the referee for helping us to improve our exposition. 

The second author is funded by the grant {\it Roth Scholarship} of the Department of Mathematics at Imperial College London.

\section{Preliminary results}

In this section we recall some techniques we will need later in this paper.

\subsection{Volume of divisors}

Let $D$ be a Cartier divisor on a normal variety $X$, not necessarily a surface. The volume of $D$ measures the asymptotic growth of the space of global sections of multiples of $D$. We will recall here few properties of the volume and we refer to \cite{lazarsfeld1} for more details. 

\begin{definition}
Let $D$ be a Cartier divisor on $X$ with $\dim(X)=n$. The volume of $D$ is defined by
\bdism
\vol(D):=\limsup_{m\to \infty}\frac{h^{0}(X,\oo_{X}(mD))}{m^{n}/n!}.
\edism
The volume of $X$ is defined as $\vol(X):=\vol(K_X)$.
\end{definition}

It is easy to show that if $D$ is big and nef then $\vol(D)=D^{n}$. In general, it is a hard invariant to compute but thanks to Fujita's approximation theorem, some of its properties can be deduced from the case where $D$ is ample. For a proof of the theorem in characteristic $0$ we refer to \cite{lazarsfeld1}. More recently, Takagi gave a proof of the same theorem in positive characteristic (see \cite{takagi}). In particular, we can deduce the log-concavity of the volume function even in positive characteristic. The proof is exactly the same as Theorem 11.4.9 in \cite{lazarsfeld1}.

\begin{thm} \label{logconc}
Let $D$ and $D'$ be big Cartier divisors on a normal variety $X$ defined over an algebraically closed field. Then 
\bdism
\vol(D+D')^{1/n}\geq \vol(D)^{1/n}+\vol(D')^{1/n}.
\edism
\end{thm}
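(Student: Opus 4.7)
The plan is to mimic the characteristic-zero proof (Lazarsfeld, Theorem 11.4.9), reducing the inequality from big divisors to ample (or nef and big) divisors via Fujita's approximation, and then invoking a purely intersection-theoretic Brunn--Minkowski (Khovanskii--Teissier) inequality for nef divisors, which holds in arbitrary characteristic. The only genuinely positive-characteristic input needed is Takagi's version of Fujita's approximation \cite{takagi}, cited just above the theorem.

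More concretely, fix $\varepsilon>0$. First I would apply Fujita's approximation to $D$ and $D'$ separately: one obtains projective birational morphisms $\mu\colon Y\to X$ and $\mu'\colon Y'\to X$ and decompositions
$$\mu^{*}D\sim_{\qq} A+E,\qquad (\mu')^{*}D'\sim_{\qq} A'+E',$$
with $A$, $A'$ ample $\qq$-divisors and $E$, $E'$ effective, satisfying $(A^{n})>\vol(D)-\varepsilon$ and $((A')^{n})>\vol(D')-\varepsilon$. Passing to a common projective model $\pi\colon Z\to X$ that dominates both $Y$ and $Y'$ (resolving the graph of the induced birational map), I can pull everything back to $Z$, so that $\pi^{*}D$ and $\pi^{*}D'$ admit decompositions into a nef-and-big part plus an effective part. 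Since the volume is a birational invariant and is monotone under the addition of effective divisors,
$$\vol(D+D')=\vol\bigl(\pi^{*}(D+D')\bigr)\geq \vol(A+A'),$$
and because $A+A'$ is nef and big on $Z$, its volume coincides with the top self-intersection $(A+A')^{n}$.

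The final step is the classical Khovanskii--Teissier inequality for nef divisors,
$$(A+A')^{n}\geq \bigl((A^{n})^{1/n}+((A')^{n})^{1/n}\bigr)^{n},$$
which is proven by slicing with general hyperplane sections and applying the Hodge index theorem on a surface; both ingredients are valid over an algebraically closed field of arbitrary characteristic (Bertini holds since the ground field is infinite). Taking $n$th roots and combining with the estimates from Fujita's approximation yields
$$\vol(D+D')^{1/n}\geq (\vol(D)-\varepsilon)^{1/n}+(\vol(D')-\varepsilon)^{1/n},$$
and letting $\varepsilon\to 0$ gives the stated inequality.

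The main obstacle, or rather the only point at which the proof could fail in positive characteristic, is the availability of Fujita's approximation; but this is precisely what Takagi provides. Every other ingredient (birational invariance of the volume, monotonicity on effective differences, computation of the volume of a nef and big divisor as a top self-intersection, and the Khovanskii--Teissier inequality for nef divisors) is characteristic-free, so no substantive modification of the characteristic-zero argument is required.
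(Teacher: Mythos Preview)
Your proposal is correct and matches the paper's approach exactly: the paper does not spell out a proof but simply remarks that, given Takagi's Fujita approximation in positive characteristic, the argument is identical to Lazarsfeld's Theorem~11.4.9, which is precisely the reduction-to-nef-plus-Khovanskii--Teissier argument you have written out.
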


\subsection{Bogomolov's inequality and Sakai's theorems}

We start with the notion of semi-stability for rank-two vector bundles on surfaces. Let $X$ be a smooth surface defined over an algebraically closed field.

\begin{definition}\label{inst_bog}
A rank-two vector bundle $\mathcal{E}$ on $X$ is {\it unstable} if it fits in a short exact sequence
$$0 \rightarrow \oo_X(D_1)\rightarrow \mathcal{E}\rightarrow \mathcal{I}_Z \cdot \oo_X(D_2)\rightarrow 0$$
where $D_1$ and $D_2$ are Cartier divisors such that $D':=D_1-D_2$ is big with $(D'^2)>0$ and $Z$ is an effective $0$-cycle on $X$.

The vector bundle $\mathcal{E}$ is {\it semi-stable} if it is not unstable.
\end{definition}

In characteristic zero, the following celebrated result holds (cf. \cite{bogomolov}).

\begin{thm}[Bogomolov]
Let $X$ be defined over a field of characteristic zero. Then every rank-two vector bundle $\mathcal{E}$ for which $c_1^2(\mathcal{E}) > 4c_2(\mathcal{E})$ is unstable.
\end{thm}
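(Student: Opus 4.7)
The plan is to combine a Riemann--Roch calculation on symmetric powers of $\mathcal{E}$ with the characteristic-zero factorization of a binary form over the generic point of $X$. Set $\Delta := c_1^2(\mathcal{E})-4c_2(\mathcal{E}) > 0$ and introduce the trace-adjusted symmetric power
\[\mathcal{F}_m := \mathrm{Sym}^{2m}(\mathcal{E}) \otimes \det(\mathcal{E})^{-m}.\]
Applying the splitting principle to the Chern roots $\alpha, \beta$ of $\mathcal{E}$, one sees that $\mathcal{F}_m$ has Chern roots $(i-m)(\alpha-\beta)$ for $i=0,\dots,2m$, whence $c_1(\mathcal{F}_m)=0$ and $c_2(\mathcal{F}_m) = -\tfrac{m(m+1)(2m+1)}{6}\Delta$ (as $(\alpha-\beta)^2=\Delta$). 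Hirzebruch--Riemann--Roch on $X$ therefore yields
\[\chi(X, \mathcal{F}_m) = (2m+1)\chi(\mathcal{O}_X) + \tfrac{m(m+1)(2m+1)}{6}\Delta = \tfrac{m^3}{3}\Delta + O(m^2),\]
which is positive and unbounded as $m \to \infty$.

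Since $\mathcal{E}$ has rank two, the isomorphism $\mathcal{E}^\vee \cong \mathcal{E}\otimes\det(\mathcal{E})^{-1}$ shows that $\mathcal{F}_m$ is self-dual, and Serre duality gives $h^2(\mathcal{F}_m) = h^0(\mathcal{F}_m \otimes K_X)$. Combined with the Euler-characteristic estimate, at least one of $\mathcal{F}_m$ or $\mathcal{F}_m\otimes K_X$ must have $h^0 \ge \tfrac{m^3}{6}\Delta - O(m^2)$ for $m\gg 0$. Such a section amounts to a nonzero element of $H^0(X, \mathrm{Sym}^{2m}(\mathcal{E})\otimes M_m)$ with $c_1(M_m) = -m\,c_1(\mathcal{E}) + O(1)$. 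Via the projective bundle $\pi\colon Y := \mathbb{P}(\mathcal{E})\to X$, this section corresponds to an effective divisor $D \sim 2m\xi + \pi^* M_m$ on $Y$, where $\xi=c_1(\mathcal{O}_Y(1))$.

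The remaining task is to extract a destabilizing sub-line bundle from $D$, and this is the step where the characteristic-zero assumption is indispensable. Restricting $D$ to the generic fiber $\mathbb{P}^1_{K(X)}$ of $\pi$ gives a zero-dimensional scheme cut out by a binary form of degree $2m$ over $K(X)$; in characteristic zero this form admits a non-trivial factorization into irreducible binary forms, so we may pick an irreducible component $D_0\subset D$ mapping to $X$ as a $d$-fold cover for some $d\ge 1$. Saturating and, when $d>1$, descending by the (separable) trace map, we obtain a sub-line bundle $\mathcal{O}_X(D_1)\hookrightarrow \mathcal{E}$ with quotient $\mathcal{I}_Z\otimes\mathcal{O}_X(D_2)$. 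A volume comparison, using the log-concavity of $\vol$ from Theorem~\ref{logconc} together with the cubic-in-$m$ growth of $h^0$, then rules out $(D_1-D_2)^2 \le 0$, so $D':=D_1-D_2$ is big with $(D')^2>0$, yielding the sequence of Definition~\ref{inst_bog}. The principal obstruction for the plan is this last factorization step: in characteristic $p$ the binary form can be an inseparable $p$-th power coming from a Frobenius pullback, and the associated component of $D$ does not descend to a genuine sub-line bundle of $\mathcal{E}$, which is precisely why the statement fails in positive characteristic.
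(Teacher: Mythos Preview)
The paper does not prove this theorem at all; it simply records the statement and cites Bogomolov's original paper. There is therefore no ``paper's own proof'' to compare against, and I will assess your argument on its own merits.

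Your Riemann--Roch computation on $\mathcal{F}_m=\mathrm{Sym}^{2m}\mathcal{E}\otimes\det(\mathcal{E})^{-m}$ is correct and is indeed the engine of Bogomolov's argument: the cubic growth of $\chi(\mathcal{F}_m)$, together with the self-duality $\mathcal{F}_m^\vee\cong\mathcal{F}_m$ and Serre duality, forces $h^0$ of $\mathcal{F}_m$ or $\mathcal{F}_m\otimes K_X$ to grow like $m^3\Delta$, and hence produces an effective divisor on $\mathbb{P}(\mathcal{E})$ of fiber-degree $2m$ with first Chern class of order $O(1)$ on the base.

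The genuine gap is the step where you pass from an irreducible component $D_0\subset\mathbb{P}(\mathcal{E})$ of fiber-degree $d$ to a sub-line bundle $\mathcal{O}_X(D_1)\hookrightarrow\mathcal{E}$. A component of fiber-degree $d$ corresponds to a saturated line sub-bundle of $\mathrm{Sym}^d\mathcal{E}$ (after a twist), \emph{not} of $\mathcal{E}$, and there is no ``trace map'' that descends such a datum to an honest sub-line bundle of $\mathcal{E}$ when $d>1$. The trace of the finite separable cover $D_0\to X$ is a map $f_*\mathcal{O}_{D_0}\to\mathcal{O}_X$; it does not interact with $\mathcal{E}$ in any way that would produce the required injection. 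Equivalently, over the generic point an irreducible binary form of degree $d>1$ does not yield a $K(X)$-rational point of $\mathbb{P}^1_{K(X)}$, separable or not. The standard characteristic-zero argument avoids this altogether: one proves instead that $\mu_H$-semistability of $\mathcal{E}$ implies $\mu_H$-semistability of every $\mathrm{Sym}^n\mathcal{E}$ (this is precisely the step requiring characteristic zero, via complete reducibility of $\mathrm{GL}_2$-representations), and then a uniform $O(m^2)$ upper bound on $h^0$ of a semistable sheaf of the given slope and rank contradicts the cubic growth you computed. If you want to stay closer to your geometric picture, the correct replacement is to take $D_1$ to be the maximal destabilizing sub-line bundle of $\mathcal{E}$ with respect to an ample $H$ and then to run the numerics directly, rather than to manufacture $D_1$ from $D_0$.

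Finally, the closing ``volume comparison'' that is supposed to rule out $(D_1-D_2)^2\le 0$ is asserted rather than argued. Once one actually has a maximal destabilizing $\mathcal{O}_X(D_1)\hookrightarrow\mathcal{E}$, the bigness of $D_1-D_2$ and the positivity of $(D_1-D_2)^2$ follow from the Hodge index theorem combined with $c_1^2-4c_2>0$ and $(D_1-D_2)\cdot H>0$; log-concavity of the volume is not needed here, and invoking Theorem~\ref{logconc} obscures rather than clarifies the point.
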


As a consequence, one can deduce the following theorem, due to Sakai (cf. \cite[Proposition 1]{sakai}), which turns out to be an equivalent statement 
as shown in \cite{dicerbo}.

\begin{thm}\label{sakai_1}
Let $D$ be a nonzero big divisor with $D^2 > 0$ on a smooth projective surface $X$ over a field of characteristic zero. If $H^1(X, \oo_X (K_X + D))\neq 0$ then there exists a non-zero effective divisor $E$ such that
\begin{itemize}
\item $D - 2E$ is big; 
\item $(D-E)\cdot E \le 0$.
\end{itemize}
\end{thm}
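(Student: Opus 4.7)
The plan is to convert the non-vanishing of $H^{1}(X,\oo_{X}(K_{X}+D))$ into a rank-two extension whose Chern numbers trigger Bogomolov's instability theorem, and then to read off $E$ from the destabilising sub-line bundle. Concretely, Serre duality gives $H^{1}(X,\oo_{X}(-D))\cong H^{1}(X,\oo_{X}(K_{X}+D))^{\vee}\neq 0$, and since this group coincides with $\mathrm{Ext}^{1}(\oo_{X}(D),\oo_{X})$, we obtain a non-split extension
$$0\to \oo_{X}\to \mathcal{E}\to \oo_{X}(D)\to 0,$$
whose middle term is a rank-two vector bundle with $c_{1}(\mathcal{E})=D$ and $c_{2}(\mathcal{E})=0$. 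Since $c_{1}^{2}(\mathcal{E})=D^{2}>4\,c_{2}(\mathcal{E})$, Bogomolov's theorem guarantees that $\mathcal{E}$ is unstable, so there is a short exact sequence
$$0\to \oo_{X}(D_{1})\to \mathcal{E}\to \mathcal{I}_{Z}\cdot \oo_{X}(D_{2})\to 0$$
with $D_{1}+D_{2}=D$, $D_{1}\cdot D_{2}+\deg(Z)=0$, and $D':=D_{1}-D_{2}$ big with $(D')^{2}>0$.

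I would then set $E:=D_{2}$. The two asserted inequalities follow at once: $D-2E=D_{1}-D_{2}=D'$ is big, and $(D-E)\cdot E=D_{1}\cdot D_{2}=-\deg(Z)\leq 0$. What requires a small argument is the effectivity and non-triviality of $E$. For this, I would analyse the composition $\varphi\colon \oo_{X}(D_{1})\hookrightarrow \mathcal{E}\to \oo_{X}(D)$. Being a map between line bundles, $\varphi$ is either zero or injective; if $\varphi=0$, then $\oo_{X}(D_{1})\subset \mathcal{E}$ factors through $\oo_{X}$, and since the quotient $\mathcal{I}_{Z}\cdot \oo_{X}(D_{2})$ is torsion-free---so $\oo_{X}(D_{1})$ is saturated in $\mathcal{E}$---this would force $D_{1}=0$, giving $D'=-D$ big and contradicting the bigness of $D$. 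Hence $\varphi$ is injective and produces a non-zero section of $\oo_{X}(D-D_{1})=\oo_{X}(E)$, so $E$ is effective. Finally, $E$ is non-zero, because otherwise $\varphi$ would be a non-zero endomorphism of $\oo_{X}(D)$, hence an isomorphism, contradicting the non-splitness of the extension.

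The only step requiring genuine care is the verification that the destabilising sub-line bundle $\oo_{X}(D_{1})$ produces an effective $E=D-D_{1}$; the rest of the argument is a direct computation with Chern classes and extensions. The characteristic zero hypothesis enters solely through Bogomolov's theorem, which is used here as a black box.
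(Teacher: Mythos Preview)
Your proof is correct and is precisely the classical argument due to Sakai and Bogomolov that the paper cites (the paper does not give its own proof of this statement, referring instead to \cite[Proposition 1]{sakai}). The same structure appears explicitly in the paper's Remark~\ref{rem_eff} for the Frobenius-pullback variant. The only cosmetic difference is in the effectivity step: you show $E=D_{2}$ is effective by proving the composition $\oo_{X}(D_{1})\hookrightarrow\mathcal{E}\to\oo_{X}(D)$ is nonzero, whereas the paper's Remark~\ref{rem_eff} argues via the dual composition $\oo_{X}\hookrightarrow\mathcal{E}\to\mathcal{I}_{Z}\cdot\oo_{X}(D_{2})$; both routes are standard and equivalent.
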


The previous result easily implies a weaker version of Reider's theorem.

\begin{thm}\label{sakai_2}
Let $D$ be a nef divisor with $D^2 > 4$ on a smooth projective surface $X$ over a field of characteristic zero. Then $K_X + D$ has no base point unless there exists a non-zero effective divisor $E$ such that $D \cdot E = 0$ and $(E^2) = -1$ or $D \cdot E = 1$ and $(E^2) = 0$.
\end{thm}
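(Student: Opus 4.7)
The plan is to reduce to Theorem \ref{sakai_1} via a blow-up. Suppose, for contradiction, that some $x \in X$ is a base point of $K_X + D$, and let $\pi : \tilde X \to X$ be the blow-up of $X$ at $x$ with exceptional divisor $F$. Set $\tilde D := \pi^* D - 2F$, so that $\tilde D^2 = D^2 - 4 > 0$ and $K_{\tilde X} + \tilde D = \pi^*(K_X + D) - F$. Since $\pi^* D$ is nef and $\tilde D \cdot \pi^* D = D^2 > 0$, the class $\tilde D$ lies in the same component of the positive cone as $\pi^* D$ and is therefore big. Using the identity $R\pi_* \oo_{\tilde X}(-F) = \mathcal{I}_x$, the Leray spectral sequence yields
\[
H^1(\tilde X, K_{\tilde X} + \tilde D) \cong H^1(X, \mathcal{I}_x \otimes \oo_X(K_X + D)),
\]
and the latter is non-zero because $x$ is a base point, as the long exact sequence attached to $0 \to \mathcal{I}_x(K_X + D) \to \oo_X(K_X + D) \to k(x) \to 0$ shows.

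Theorem \ref{sakai_1} then provides a non-zero effective divisor $\tilde E$ on $\tilde X$ with $\tilde D - 2\tilde E$ big and $(\tilde D - \tilde E) \cdot \tilde E \leq 0$. Writing $\tilde E = \pi^* E + s F$ with $E := \pi_* \tilde E$ effective on $X$ and $s \in \zz$, a direct intersection computation rearranges the second condition to
\[
(D - E) \cdot E + s(s+2) \leq 0,
\]
while bigness of $\tilde D - 2\tilde E$ paired with nefness of $\pi^* D$ gives $(\tilde D - 2\tilde E) \cdot \pi^* D \geq 0$, i.e.\ $D \cdot E \leq D^2/2$. The possibility $E = 0$ is excluded immediately: it forces $s > 0$ and then $(\tilde D - \tilde E)\cdot \tilde E = s(s+2) > 0$, a contradiction.

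I would finish by a case analysis on the non-negative integer $D \cdot E$, guided by the Hodge index inequality $(D \cdot E)^2 \geq D^2 \cdot E^2$. If $D \cdot E = 0$, Hodge forces $E^2 < 0$ (a non-zero effective divisor is not numerically trivial), and the displayed inequality gives $E^2 \geq s(s+2) \geq -1$, so $E^2 = -1$. If $D \cdot E = 1$, Hodge gives $E^2 \leq 1/D^2 < 1$, hence $E^2 \leq 0$, while the inequality rearranges to $E^2 \geq (s+1)^2 \geq 0$, so $E^2 = 0$. The remaining case $D \cdot E \geq 2$ has to be ruled out, and this is the main obstacle: there $E^2 \geq D \cdot E - 1 \geq 1$, and Hodge produces the quadratic inequality $t^2 - D^2 t + D^2 \geq 0$ in $t := D \cdot E$; an elementary root analysis shows that, precisely when $D^2 > 4$, the smaller root is strictly less than $2$ and the larger strictly greater than $D^2/2$, so the conditions $t \geq 2$ and $t \leq D^2/2$ are incompatible. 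This is exactly where the hypothesis $D^2 > 4$ is used in an essential way.
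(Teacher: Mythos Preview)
Your argument is correct and is precisely the deduction the paper has in mind: the paper does not spell out a proof of Theorem~\ref{sakai_2} beyond the remark that it follows from Theorem~\ref{sakai_1}, and your blow-up/push-down computation is exactly the standard way to carry this out. In fact it is the same mechanism the paper later packages (via Sakai's \cite[Proposition~2]{sakai}) as Corollary~\ref{push} and applies in Proposition~\ref{quasi_ell_reider}, so your proof is essentially a self-contained version of that same approach specialised to $m=1$ and $\alpha=4$.
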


The following result, conjectured by Fujita, can be deduced for smooth surfaces in characteristic zero.

\begin{corollary}[Fujita Conjectures for surfaces, char 0]\label{shepherd}
Let $D_1, \ldots, D_k$ be ample divisors on $X$ smooth, over a field of characteristic zero. Then $K_X+D_1+\ldots+D_k$ is base-point free if $k\ge3$ and very ample if $k\ge 4$.
\end{corollary}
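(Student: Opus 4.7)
The plan is to deduce the corollary from the Reider-type Theorem~\ref{sakai_2} for the base-point-free half, together with the natural length-two analogue of Sakai's Theorem~\ref{sakai_1} for the very ample half. In both halves the positivity of the summands will make the numerical exceptional cases vacuous in one line.

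First I would address base-point-freeness. Set $D:=D_{1}+\ldots+D_{k}$ with $k\ge 3$. Since each $D_{i}$ is ample on a smooth projective surface, $D_{i}^{2}\ge 1$ and $D_{i}\cdot D_{j}\ge 1$ for $i\ne j$; expanding gives $D^{2}\ge k+k(k-1)=k^{2}\ge 9>4$. Being ample, $D$ is nef, so Theorem~\ref{sakai_2} applies to $K_{X}+D$. Its exceptional alternative would produce a nonzero effective divisor $E$ with either $D\cdot E=0,\ E^{2}=-1$ or $D\cdot E=1,\ E^{2}=0$. However, for each $i$ the ampleness of $D_{i}$ together with the effectivity of $E$ forces $D_{i}\cdot E\ge 1$, hence $D\cdot E\ge k\ge 3$. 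Both alternatives are excluded and $K_{X}+D$ is base-point free.

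For $k\ge 4$ I would argue analogously with a very ample Reider-type statement. First, I would promote Theorem~\ref{sakai_1} to a statement about $H^{1}\bigl(X,\mathcal{I}_{Z}\otimes\oo_{X}(K_{X}+D)\bigr)$ for any length-two $0$-dimensional subscheme $Z\subset X$, by running Bogomolov's destabilisation on the rank-two bundle obtained from a Serre-type extension of $\mathcal{I}_{Z}$, exactly as in Definition~\ref{inst_bog}. The destabilising line subbundle cuts out an effective divisor $E$ passing through $Z$, with $D\cdot E\le 2$ and bounded self-intersection. Failure of very ampleness of $K_{X}+D$ on $Z$ is equivalent to nonvanishing of this $H^{1}$, hence to the existence of such an $E$. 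With $k\ge 4$ ample summands, $D^{2}\ge 16$ and $D\cdot E\ge k\ge 4$, ruling out every exceptional $E$ and showing that $K_{X}+D$ separates all length-two subschemes, hence is very ample.

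The main obstacle is precisely the upgrade of Sakai's theorem from base points to length-two subschemes: Theorem~\ref{sakai_2} as recorded in this excerpt controls only base points. Once the Bogomolov destabilisation is repeated with $\mathcal{I}_{Z}$ in place of $\oo_{X}$, the numerical bookkeeping $D\cdot E\ge k$ makes all Reider-style exceptional divisors impossible for the required $k$, and the corollary drops out uniformly in both cases.
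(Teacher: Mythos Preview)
Your proposal is correct and is precisely the standard Reider-type argument that the paper has in mind. In fact the paper offers no proof of this corollary at all: it is stated as a known consequence of Reider's theorem (the paper cites \cite{reider}), with Theorem~\ref{sakai_2} recorded only as the base-point half. Your base-point-free argument from Theorem~\ref{sakai_2} is complete and correct as written; for the very-ample half you rightly note that one must rerun the Bogomolov--Reider construction with a length-two subscheme $Z$ in place of a single point, which is exactly Reider's original argument and yields an exceptional $E$ with $D\cdot E\le 2$, killed by $D\cdot E\ge k\ge 4$. So there is nothing to compare: your sketch \emph{is} the intended proof, and your identification of the ``main obstacle'' simply reflects that the paper quoted only the base-point version of Reider while tacitly invoking the full statement.
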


We remark that Theorem \ref{sakai_1} is not known in general for smooth surfaces in positive characteristic, although Fujita's conjectures are expected to hold.

\subsection{Ekedahl's construction and Shepherd-Barron's theorem}

In this section we recall some classical results on the geography of smooth surfaces in positive characteristic (see \cite{ekedahl}, \cite{shepherdbarron} and \cite{shepherdbarron1}).

For a good overview on the geography of surfaces in positive characteristic, see \cite{liedtke}.


We discuss here a construction which is due to Tango for the case of curves (cf. \cite{tango}) and Ekedahl (cf. \cite{ekedahl}) for surfaces. There are many variations on the same theme, but we will focus on the one which is more related to stability of vector bundles. We need this fundamental result.

\begin{thm}[Bogomolov]\label{pe_unstable}
Let $\mathcal{E}$ be a rank-two vector bundle on a smooth projective surface $X$ over a field of positive characteristic such that the Bogomolov's inequality does not hold (i.e. such that $c_1^2(\mathcal{E}) > 4c_2(\mathcal{E})$). Then there exists a reduced and irreducible surface $Y$ contained in the ruled threefold $\pp(\mathcal{E})$ such that
\begin{itemize}
\item the restriction $\rho\colon Y\rightarrow X$ is $p^e$-purely inseparable for some $e>0$;
\item $(F^*)^e (\mathcal{E})$ is unstable.
\end{itemize}
\end{thm}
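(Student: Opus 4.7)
\smallskip
\noindent\textbf{Proof plan.} The strategy is to reduce to the ordinary definition of instability (Definition~\ref{inst_bog}) at the level of some Frobenius pullback $\mathcal{E}_e := (F^*)^e\mathcal{E}$, and then transport the resulting destabilising subvariety of $\pp(\mathcal{E}_e)$ down to $\pp(\mathcal{E})$ via the canonical projection. First I would observe that Chern classes scale as $c_i(\mathcal{E}_e)=p^{ei}c_i(\mathcal{E})$, so the Bogomolov discriminant $c_1^2(\mathcal{E}_e)-4c_2(\mathcal{E}_e)=p^{2e}(c_1^2(\mathcal{E})-4c_2(\mathcal{E}))$ stays strictly positive for every $e\ge 0$. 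I would then invoke the positive-characteristic Bogomolov inequality for \emph{strongly semistable} rank-two bundles (due to Langer, cf.\ also \cite{shepherdbarron}): if $(F^*)^e\mathcal{E}$ were semistable for every $e$, it would satisfy $c_1^2\le 4c_2$, a contradiction. Hence I can choose the smallest $e\ge 1$ such that $\mathcal{E}_e$ is unstable in the sense of Definition~\ref{inst_bog}, which already settles the second bullet of the statement.

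For the geometric construction I would exploit the destabilising extension
\[
0\to\oo_X(D_1)\to\mathcal{E}_e\to\mathcal{I}_Z\cdot\oo_X(D_2)\to 0.
\]
The surjection is a genuine line-bundle quotient over $X\setminus\supp Z$, so it defines a rational section of $\pi_e\colon\pp(\mathcal{E}_e)\to X$; I let $Y'\subset\pp(\mathcal{E}_e)$ be its closure, a reduced irreducible surface birational to $X$. Using the canonical identification $\pp(\mathcal{E}_e)=\pp(\mathcal{E})\times_{X,F^e}X$, the base-change morphism $\phi\colon\pp(\mathcal{E}_e)\to\pp(\mathcal{E})$ is finite, purely inseparable, of total degree $p^{2e}$. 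I would then set $Y:=\phi(Y')$, which is automatically reduced and irreducible because $\phi$ is a universal homeomorphism.

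To finish I would compute $\deg\rho$. Commutativity $\pi\circ\phi=F^e\circ\pi_e$ yields $\deg(Y'\to Y)\cdot\deg\rho=p^{2e}$, and the minimality of $e$ prevents $Y'$ from descending through any of the intermediate Frobenius quotients $\pp(\mathcal{E}_k)\to\pp(\mathcal{E}_{k-1})$; a careful step-by-step degree count (each intermediate projection being purely inseparable of degree $p^2$) then forces $\deg\rho=p^e$, possibly after enlarging $e$ (which preserves instability of $(F^*)^e\mathcal{E}$ since the destabilising sequence pulls back under Frobenius flatness on a smooth surface). The deepest ingredient, and the one I expect to be the main obstacle, is the very first step: the Bogomolov-type inequality for strongly semistable rank-two bundles in characteristic $p>0$. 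Everything afterwards — producing the section $Y'$ from the destabilising sequence and tracking Frobenius factorisations — is essentially formal once this input is available.
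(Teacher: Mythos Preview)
The paper does not prove this theorem: its entire proof is the single line ``See \cite[Theorem~1]{shepherdbarron}''. So there is no in-paper argument to compare against; what you have written is a sketch of (a variant of) the cited Shepherd-Barron proof.

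Your overall architecture is sensible, but two points deserve comment. First, the ``deepest ingredient'' you invoke --- the Bogomolov inequality for strongly semistable rank-two bundles --- is not an external input in Shepherd-Barron's original argument; it is essentially proved \emph{together} with the geometric statement. Shepherd-Barron works directly on $\pp(\mathcal{E})$: a Riemann--Roch computation on the ruled threefold produces a divisor in a suitable tautological linear system, and one argues that some irreducible component $Y$ of it must dominate $X$ purely inseparably (a generically \'etale component would, after base change, destabilise $\mathcal{E}$ itself and contradict semistability). The instability of $(F^*)^e\mathcal{E}$ is then \emph{read off} from $Y$, not assumed beforehand. Your route --- take Langer's later theorem as a black box to get instability of some $(F^*)^e\mathcal{E}$, then geometrise --- is logically valid but anachronistic relative to \cite{shepherdbarron}, and it obscures that the two bullets are really one phenomenon.

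Second, your degree computation for $\rho$ is the weakest step. From $\deg(Y'\to Y)\cdot\deg\rho=p^{2e}$ you cannot conclude $\deg\rho=p^e$ merely by ``minimality of $e$'': minimality tells you $\mathcal{E}_{e-1}$ is semistable, not that the quasi-section $Y'$ fails to descend one Frobenius step. Your hedge ``possibly after enlarging $e$'' confirms you have not pinned this down. In the original argument the degree of $Y\to X$ is controlled directly by the class of $Y$ in $\operatorname{Pic}\pp(\mathcal{E})$, which is what makes the identification with a specific power of $p$ clean; recovering that control from your Frobenius-pullback picture requires an extra argument you have not supplied.
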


\proof
See \cite[Theorem 1]{shepherdbarron}
\endproof

The previous result also provides an explicit construction of the purely inseparable cover (cf. \cite{shepherdbarron}).

\begin{construction}\label{constr}
Take a rank-two vector bundle $\mathcal{E}$ such that the Bogomolov's inequality does not hold and let $e$ be an integer such that $F^{e*}\mathcal{E}$ is unstable. We have the following commutative diagram
$$\begin{xymatrix}
{
  \pp(F^{e*}\mathcal{E}) \ar[r]^G \ar[d]_{p'} &  \pp(\mathcal{E})\ar[d]^p\\
  X  \ar[r]^{F^e}   &  X
 }
 \end{xymatrix}.$$
The fact that $F^{e*}\mathcal{E}$ is unstable gives an exact sequence
$$0 \rightarrow \oo_X(D_1)\rightarrow F^{e*}\mathcal{E}\rightarrow \mathcal{I}_Z \cdot \oo_X(D_2)\rightarrow 0$$
and a quasi-section $X_0$ of $\pp(F^{e*}\mathcal{E})$ (i.e. $p'_{|X_0}\colon X_0\rightarrow X$ is birational). Let $Y$ be the image of $X_0$ via $G$. One can show that the induced morphism.
$$\rho\colon Y \rightarrow X$$
is $p^e$-purely inseparable. Let us define $D':=D_1-D_2$. One can show (cf. \cite[Corollary 5]{shepherdbarron}) that
$$K_Y \equiv \rho^*\bigg(K_X - \frac{p^e-1}{p^e}D'\bigg).$$
\end{construction}

\begin{rmk}\label{rem_eff}
We will be particularly interested in the case when the rank-two vector bundle $\mathcal{E}$ comes as a nontrivial extension
$$0\rightarrow \oo_X \rightarrow \mathcal{E} \rightarrow \oo_X(D)\rightarrow 0$$
associated to a non-zero element $\gamma \in H^1(X,\oo_X(-D)),$ where $D$ is a big Cartier divisor such that $(D^2)>0$. Indeed, the instability of $F^{e*}\mathcal{E}$ guarantees the existence of a diagram (keeping the notation as in Definition \ref{inst_bog})
$$\begin{xymatrix}
{
  & & 0 \ar[d] & & \\
  & & \oo_X(D'+D_2)\ar[d]^{f_1} & & \\
  0 \ar[r] & \oo_X \ar[r]^{g_1} \ar[rd]_\sigma & F^{e*}\mathcal{E} \ar[r]^{g_2} \ar[d]^{f_2} & \oo_X(p^e D) \ar[r] & 0 \\
  & & \mathcal{I}_Z \cdot \oo_X(D_2) \ar[d] & & \\
  & & 0 & & 
 }
 \end{xymatrix}.$$
First, we claim that the composition map $\sigma$ is non-zero. Assume, by contradiction that $\sigma \equiv 0$. This gives a nonzero section $\sigma'\colon \oo_X \rightarrow \oo_X(D'+D_2)$. This forces the composition $\tau:= g_2 \circ f_1$ to be zero. But this implies that $D'+D_2 \le 0$. This is a contradiction (cf. the proof of \cite[Proposition 1]{sakai} and \cite[Lemma 16]{shepherdbarron}). 

This implies that $D_2 \simeq E \ge 0$: one can then rewrite the vertical exact sequence as follows:
$$0 \rightarrow \oo_X(p^eD-E)\rightarrow F^{e*}\mathcal{E}\rightarrow \mathcal{I}_Z \cdot \oo_X(E)\rightarrow 0.$$

\end{rmk}

Since \cite[Corollary 8]{shepherdbarron} guarantees that Corollary \ref{shepherd} holds true for smooth surfaces in positive characteristic which are neither quasi-elliptic with $\kappa(X)=1$ nor of general type, we need to deduce effective base-point-freeness and very ampleness results only for these two classes of surfaces.

We recall here the following key result from \cite{shepherdbarron}.

\begin{thm}
Let $\mathcal{E}$ be a rank-two vector bundle on a smooth projective surface $X$ over an algebraically closed field of positive characteristic such that the Bogomolov's inequality does not hold and is semi-stable. Then
\begin{itemize}
\item if $X$ is not of general type, then $X$ is quasi-elliptic with $\kappa(X)=1$;
\item if X is of general type and 
$$c_1^2(\mathcal{E}) - 4c_2(\mathcal{E}) > \frac{\vol(X)}{(p-1)^2},$$
then $X$ is purely inseparably uniruled. More precisely, in the notation of Theorem \ref{pe_unstable}, $Y$ is uniruled.
\end{itemize}
\end{thm}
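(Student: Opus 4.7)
The plan is to exploit Construction \ref{constr} applied to $\mathcal{E}$. Since $\mathcal{E}$ is semi-stable but fails the Bogomolov inequality, there is a smallest integer $e \geq 1$ such that $F^{e*}\mathcal{E}$ is unstable. Applying Theorem \ref{pe_unstable} and Construction \ref{constr} produces a reduced irreducible surface $Y \subset \pp(\mathcal{E})$, a $p^e$-purely inseparable morphism $\rho \colon Y \to X$, and a big divisor $D' = D_1 - D_2$ on $X$ with
\[
K_Y \equiv \rho^*\!\left(K_X - \frac{p^e - 1}{p^e}\, D'\right).
\]
A direct Chern class computation on the destabilising extension for $F^{e*}\mathcal{E}$ gives $(D')^2 \geq p^{2e}\bigl(c_1^2(\mathcal{E}) - 4c_2(\mathcal{E})\bigr)$, and Zariski decomposition on $X$ yields $\vol(D') \geq (D')^2$.

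The heart of the argument is the following volume dichotomy: if $Y$ is not uniruled, then $(p-1)^2\bigl(c_1^2(\mathcal{E}) - 4c_2(\mathcal{E})\bigr) \leq \vol(X)$. Indeed, if $Y$ is not uniruled then $K_{\widetilde{Y}}$ is pseudo-effective on any resolution $\mu \colon \widetilde{Y} \to Y$; testing against the nef class $\mu^*\rho^* H$ for $H$ ample on $X$ and applying the projection formula (using $\mu_* K_{\widetilde Y} = K_Y$) shows that $K_X - \frac{p^e - 1}{p^e}\, D'$ has non-negative intersection with every ample divisor on $X$, so it is pseudo-effective. Writing $K_X$ as the sum of this pseudo-effective class and $\frac{p^e - 1}{p^e}\, D'$, perturbing by $\epsilon A$ with $A$ ample so that both summands become big, applying the log-concavity of the volume (Theorem \ref{logconc}), and letting $\epsilon \to 0$ yields
\[
\vol(X)^{1/2} \geq \frac{p^e - 1}{p^e}\vol(D')^{1/2} \geq (p-1)\sqrt{c_1^2(\mathcal{E}) - 4c_2(\mathcal{E})},
\]
and squaring gives the claimed bound.

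The second statement follows at once from the dichotomy: the hypothesis $c_1^2(\mathcal{E}) - 4c_2(\mathcal{E}) > \vol(X)/(p-1)^2$ excludes the non-uniruled case, so $Y$ must be uniruled. For the first statement, if $X$ is not of general type then $\vol(X) = 0$, and since $D'$ is big (so $\vol(D') > 0$), the dichotomy again forces $Y$ to be uniruled, so that $X$ admits a purely inseparable uniruled cover. At this point I would invoke the Enriques--Kodaira classification of smooth surfaces in positive characteristic and rule out each remaining class (ruled and rational surfaces; surfaces of Kodaira dimension zero; honest elliptic surfaces with $\kappa(X) = 1$) by appealing to Bogomolov's inequality for semi-stable rank-two bundles, which is known to hold on these surfaces via arguments using the fibration structure or Frobenius-flat subbundles. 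This leaves quasi-elliptic surfaces with $\kappa(X) = 1$ as the only possibility.

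The main obstacle is the classification step in the first statement: eliminating each class of non-general-type surface except quasi-elliptic ones with $\kappa(X) = 1$ requires a case-by-case verification that Bogomolov's inequality holds for semi-stable rank-two bundles in each of those geometries, and this is where the deep structural results of Ekedahl and Shepherd-Barron are indispensable. By contrast, the volume estimate and the uniruledness conclusion are essentially formal once Construction \ref{constr} and Theorem \ref{logconc} are in hand.
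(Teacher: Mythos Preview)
The paper does not actually prove this statement: its entire proof is a one-line citation of \cite[Theorem 7]{shepherdbarron}, together with the observation that $\vol(X)=(K_{X'}^2)$ for the minimal model $X'$. Your proposal, by contrast, is a genuine reconstruction of Shepherd-Barron's argument using the tools the paper has assembled (Construction \ref{constr}, Theorem \ref{pe_unstable}, and the log-concavity of volume from Theorem \ref{logconc}).

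Your volume dichotomy is correct in outline. The Chern-class identity $(D')^2 = p^{2e}\bigl(c_1^2(\mathcal{E})-4c_2(\mathcal{E})\bigr)+4\deg Z \ge p^{2e}\bigl(c_1^2(\mathcal{E})-4c_2(\mathcal{E})\bigr)$ follows from the destabilising sequence, Zariski decomposition gives $\vol(D')\ge (D')^2$, and the pseudo-effectivity step goes through because $Y$ has hypersurface (hence Gorenstein) singularities in $\pp(\mathcal{E})$, so $K_Y$ is Cartier and the comparison $K_{\widetilde Y}\cdot\mu^*\rho^*H = K_Y\cdot\rho^*H$ holds after accounting for the conductor (which is effective and only helps). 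The perturbation-and-limit use of Theorem \ref{logconc} is legitimate since Fujita approximation, hence continuity of the volume, is available in positive characteristic by Takagi's result cited in the paper. One small slip: the equality $\mu_*K_{\widetilde Y}=K_Y$ is not literally what you use and is delicate if $Y$ is non-normal; what you actually need is that exceptional and conductor contributions vanish against $\mu^*\rho^*H$, which is fine.

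You are right that the classification step for the first bullet is where the real content lies, and that it requires Shepherd-Barron's case analysis showing Bogomolov's inequality holds for semi-stable bundles on every non-general-type surface except quasi-elliptic ones with $\kappa=1$. Since the paper itself simply defers to \cite{shepherdbarron} for the whole statement, your honest acknowledgement of this dependence is exactly in line with what the paper does---you have just unpacked more of the mechanism than the authors chose to.
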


\proof
This is \cite[Theorem 7]{shepherdbarron}, since the volume of a surface $X$ with minimal model $X'$ equals $(K_{X'}^2).$
\endproof

The consequence we are interested in is the following (cf. \cite[Corollary 8]{shepherdbarron}).

\begin{corollary}[Shepherd-Barron]\label{unir}
Corollary \ref{shepherd} holds in positive characteristic if $X$ is neither of general type nor quasi-elliptic.
\end{corollary}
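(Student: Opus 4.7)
The strategy is to reduce the statement to the characteristic-zero proofs by first upgrading the previous theorem to an unconditional Bogomolov instability theorem on $X$. Under the hypothesis that $X$ is neither of general type nor quasi-elliptic with $\kappa(X)=1$, the previous theorem says that no rank-two vector bundle $\mathcal{E}$ with $c_1^2(\mathcal{E})>4c_2(\mathcal{E})$ can be semi-stable; equivalently, every such $\mathcal{E}$ is unstable in the sense of Definition \ref{inst_bog}. This is exactly the statement of Bogomolov's instability theorem on $X$.

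With Bogomolov's instability available, Theorem \ref{sakai_1} extends verbatim to $X$: given $D$ big with $D^2>0$ and $H^1(X,\oo_X(K_X+D))\neq 0$, Serre duality supplies a non-split extension
\[
0 \rightarrow \oo_X \rightarrow \mathcal{E} \rightarrow \oo_X(D) \rightarrow 0,
\]
with $c_1^2(\mathcal{E})-4c_2(\mathcal{E})=D^2>0$. Instability produces a destabilizing subsheaf, and the diagram chase carried out in the proof of \cite[Proposition 1]{sakai} (in the same spirit as Remark \ref{rem_eff}) yields the effective divisor $E$ with $D-2E$ big and $(D-E)\cdot E\leq 0$. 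The Reider-type statement of Theorem \ref{sakai_2}, together with its analogue for very ampleness obtained by applying Sakai's theorem to the ideal sheaf of a length-two subscheme, then follows by the usual numerical arguments.

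The corollary is now immediate. For $D:=D_1+\cdots+D_k$ with each $D_i$ ample, any non-zero effective curve $E$ on $X$ satisfies $D\cdot E=\sum_i D_i\cdot E\geq k$, since ampleness forces $D_i\cdot E\geq 1$. When $k\geq 3$ this rules out the alternatives $D\cdot E\in\{0,1\}$ of Theorem \ref{sakai_2}, so $K_X+D_1+\cdots+D_k$ is base-point free; when $k\geq 4$, the very-ampleness version of Reider produces analogous exceptional curves with $D\cdot E$ bounded by a constant strictly smaller than $k$, and the same count gives very ampleness. The delicate step is the first paragraph: in arbitrary positive characteristic, Bogomolov's instability genuinely fails, and it is precisely the geometric classification of the pathological surfaces supplied by the previous theorem that allows the rest of the argument to proceed exactly as in characteristic zero.
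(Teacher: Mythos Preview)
Your proposal is correct and is precisely the argument behind \cite[Corollary~8]{shepherdbarron}, which the paper simply cites without reproducing. You correctly observe that the preceding theorem upgrades to an unconditional Bogomolov instability statement on such $X$, after which Sakai's and Reider's characteristic-zero arguments run verbatim to yield Corollary~\ref{shepherd}.
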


\subsection{Bend-and-break lemmas}

We recall here a well-known result in birational geometry, based on a celebrated method due to Mori (see \cite{kollar} for an insight into these techniques).

First we need to recall some notation. Mori theory deals with effective 1-cycles in a variety $X$; more specifically we will consider non-constant morphisms $h\colon C\rightarrow X$, where $C$ is a smooth curve. In particular, these techniques allow to deform curves for which
$$(K_X \cdot C):= \deg_C h^* K_X <0.$$
In what follows, we will denote with $\overset{e}{\approx}$ the {\it effective algebraic equivalence} defined on the space of effective 1-cycles $Z_1(X)$ (see \cite[Definition II.4.1]{kollar}).

\begin{thm}[Bend-and-break]\label{b-a-b}
Let $X$ be a variety over an algebraically closed field and let $C$ be a smooth, projective and irreducible curve with a morphism $h\colon C\rightarrow X$ such that $X$ has local complete intersection singularities along $h(C)$ and $h(C)$ intersects the smooth locus of $X$. Assume the following numerical condition holds:
$$(K_X\cdot C) <0.$$
Then for every point $x \in C$, there exists a rational curve $C_x$ in $X$ passing through $x$ such that 
\begin{equation}\label{alg_rel1}
h_*[C]\overset{e}{\approx} k_0 [C_x] + \sum_{i\neq 0} k_i[C_i]
\end{equation} 
(as algebraic cycles) with $k_i \ge 0$ for all $i$ and
$$-(K_X\cdot C_x) \le \dim X +1.$$
\end{thm}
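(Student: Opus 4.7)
The plan is to follow Mori's classical bend-and-break argument, combining deformation theory, Frobenius twists (in positive characteristic) or spreading out to $\zz$-schemes (in characteristic zero), and the degeneration of a one-parameter family of pointed curves.

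First, I would use deformation theory to bound below the dimension of the scheme $\mathrm{Mor}(C, X; h|_{\{x\}})$ parametrising morphisms $C \to X$ that agree with $h$ at $x$. Since $X$ is l.c.i.\ along $h(C)$ and $h(C)$ meets the smooth locus, the standard tangent--obstruction estimate applied to $h^*T_X$ gives
\[
\dim_{[h]} \mathrm{Mor}(C, X; h|_{\{x\}}) \;\ge\; \chi(C, h^*T_X) - \dim X \;=\; -(K_X \cdot C) + (1-g)\dim X - \dim X.
\]

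Second, I would force this bound to become strictly positive by inflating the degree of $h$. In characteristic $p > 0$, precomposing with an iterated Frobenius $F^e \colon C \to C$ produces $h_e := h \circ F^{e}$ satisfying $(K_X \cdot h_e(C)) = p^e (K_X \cdot C)$, which is still negative and grows arbitrarily in absolute value; for $e \gg 0$ the above bound is positive. In characteristic zero, one spreads $(X, C, h)$ out to a scheme of finite type over a finitely generated $\zz$-algebra and reduces modulo a sufficiently large prime, obtaining the same conclusion on the generic fibre and then lifting back via semicontinuity.

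Third, a positive-dimensional component of $\mathrm{Mor}(C, X; h_e|_{\{x\}})$ through $[h_e]$ yields, after restricting to a smooth affine curve $T$ passing through $[h_e]$, a morphism $H \colon C \times T \to X$ with $H(\{x\} \times T) = h(x)$. Compactifying $T \hookrightarrow \pp^{1}$ and resolving the induced rational map $C \times \pp^{1} \dashrightarrow X$ produces a ruled surface $S \to \pp^{1}$ and a morphism $\psi \colon S \to X$ collapsing the section corresponding to $\{x\}$ to the single point $h(x)$. By the rigidity lemma, $\psi$ cannot collapse every fibre of $S \to \pp^{1}$, so at least one fibre over $\pp^{1} \setminus T$ must become reducible in the resolution, and among its components at least one is a rational curve $C_x$ through $h(x)$. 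Reading the degeneration back into $Z_1(X)$ yields the effective algebraic equivalence
\[
h_*[C] \;\overset{e}{\approx}\; k_0 [C_x] + \sum_{i \neq 0} k_i [C_i]
\]
with $k_0 > 0$ and $k_i \ge 0$ for all $i$.

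Finally, the sharp numerical bound $-(K_X \cdot C_x) \le \dim X + 1$ is obtained by iterating the procedure: as long as a rational component $C_j$ produced so far still satisfies $-(K_X \cdot C_j) > \dim X + 1$, the same deformation count applied to its normalisation $\pp^{1} \to C_j \subset X$ --- now with $g = 0$ and two fixed points --- still has strictly positive expected dimension, and a further bend-and-break break-off is possible. I expect the main obstacle to lie precisely in this last step: ensuring that each bend-and-break iteration can be performed while keeping $h(x)$ fixed, that the $-K_X$-degree of the resulting rational component strictly decreases, and that the procedure terminates exactly at the sharp threshold $\dim X + 1$. Controlling the geometry of the degenerate fibre of the ruled surface carefully enough to extract the rational component through the prescribed point, rather than through some other base point created by the resolution, is the most delicate piece of the argument.
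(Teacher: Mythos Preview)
Your proposal is correct and coincides with what the paper invokes: the paper gives no independent argument for this theorem, but simply cites \cite[Theorem II.5.14, Remark II.5.15, Corollary II.5.6, Theorem II.5.7]{kollar}, and your outline is precisely the content of those references.

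Two small refinements in the last step. First, ``strictly positive expected dimension'' of the Mor scheme with two marked points is not quite the right threshold: you need the dimension to exceed that of $\mathrm{Aut}(\pp^{1},\{0,\infty\})$, which is one-dimensional, so the breaking condition is $-(K_X\cdot C_j)-\dim X\geq 2$, i.e.\ $-(K_X\cdot C_j)>\dim X+1$, which is exactly what you want. Second, termination of the iteration is not controlled by the $-K_X$-degree, since $-K_X$ need not be nef and the $-K_X$-degree of a component may well increase after a break; the standard fix is to choose an ample class $H$ and note that each break strictly lowers the positive integer $(H\cdot C_x)$ of the component through $h(x)$.
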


\proof
See \cite[Theorem II.5.14 and Remark II.5.15]{kollar}. The relation (\ref{alg_rel1}) can be deduced looking directly at the proofs of the bend-and-break lemmas (cf. \cite[Corollary II.5.6 and Theorem II.5.7]{kollar}): our notation is slightly different, since in (\ref{alg_rel1}) we have isolated a rational curve with the required intersection properties.
\endproof

In this paper we will need the following consequence of the previous theorem.

\begin{corollary}\label{b-a-b-cor}
Let $X$ be a surface which fibres over a curve $C$ via $f\colon X\rightarrow C$ and let $F$ be the general fibre of $f$. Assume that $X$ has only local complete intersection singularities along $F$ and that $F$ is a (possibly singular) rational curve such that
$$(K_X\cdot F) <0.$$
Then
$$-(K_X\cdot F) \le 3.$$
\end{corollary}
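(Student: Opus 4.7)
The plan is to reduce the statement to Theorem \ref{b-a-b} applied to the normalization of $F$. Since $F$ is a (possibly singular) rational curve, its normalization is $\mathbb{P}^1$; composing with the inclusion $F\hookrightarrow X$ produces a morphism $h\colon\mathbb{P}^1\to X$ with image $F$. The hypothesis that $X$ has LCI singularities along $F$ is exactly the LCI assumption in Theorem \ref{b-a-b} (and $F$ being a general fibre of a morphism to a curve ensures it meets the smooth locus of $X$), while the numerical assumption $(K_X\cdot F)<0$ coincides with $\deg_{\mathbb{P}^1}h^*K_X<0$.

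Applying Theorem \ref{b-a-b} to $h$ at a chosen point $x\in\mathbb{P}^1$ produces a rational curve $C_x\subset X$ through $h(x)$ satisfying
$$h_*[\mathbb{P}^1]\overset{e}{\approx}k_0[C_x]+\sum_{i\neq0}k_i[C_i],\qquad k_i\geq0,$$
together with the bound $-(K_X\cdot C_x)\leq\dim X+1=3$. The problem is thus reduced to identifying $C_x$ with $F$ numerically, so that $(K_X\cdot C_x)=(K_X\cdot F)$.

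To carry out this identification I would intersect the above relation with the nef Cartier divisor $D:=f^*(p)$ for a closed point $p\in C$. Since $F$ is a fibre of $f$ we have $(D\cdot F)=(D\cdot h_*[\mathbb{P}^1])=0$, whereas each summand on the right is nonnegative because $D$ is nef and all $k_i\geq 0$. Hence $(D\cdot C_x)=0$, which forces $C_x$ to be contained in a fibre of $f$. Choosing $h(x)$ to be a general point of $F$, the only fibre passing through it is $F$ itself, so $C_x\subseteq F$ set-theoretically. Because $F$ is irreducible (being rational) and $C_x$ is an irreducible curve with the same support, $[C_x]=[F]$ as reduced $1$-cycles, and consequently $(K_X\cdot C_x)=(K_X\cdot F)$. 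Combining with the bend-and-break bound gives $-(K_X\cdot F)\leq 3$.

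The main (really the only) subtle step is the identification $C_x=F$; everything else is a direct invocation of bend-and-break on the normalization of $F$. The subtlety is handled cleanly by using the nefness of $f^*(p)$ to force all pieces of the algebraic equivalence to be vertical with respect to $f$, combined with the genericity of the fibre $F$.
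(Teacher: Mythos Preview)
Your proof is correct and follows essentially the same strategy as the paper's own argument: apply bend-and-break to the general fibre $F$, then argue that all curves appearing in the resulting effective algebraic equivalence are vertical for $f$, whence $C_x=F$. The only cosmetic difference is that the paper outsources the verticality step to \cite[Exercise~II.4.1.10]{kollar}, whereas you carry it out directly by intersecting with the nef divisor $f^*(p)$; the two arguments are interchangeable.
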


\proof
We are in the hypothesis of Theorem \ref{b-a-b}, so we can take a point $x$ in the smooth locus of $X$ and deduce the existence of a rational curve $C'$ passing through $x$ such that
$$-(K_X\cdot C') \le 3 \mbox{   and } [F]\overset{e}{\approx} k_0[C'] +\sum_{i\neq 0} k_i[C_i].$$

By Exercise II.4.1.10 in \cite{kollar}, the curves appearing on the right hand side of the previous equation must be contained in the fibers of $f$.  
Since $F$ is the general fibre, the second relation implies that $k_0=1$ and $k_i=0$ for all $i\neq 0$ and so that $C'=F$.
\endproof




\section{Effective Matsusaka's theorem}\label{6}

In this section we prove Theorem \ref{main} assuming the results on effective very ampleness that we will prove in the next section. If not specified, $X$ will denote a smooth surface over an algebraically closed field of arbitrary characteristic.

First, let us recall the following numerical criterion for bigness, whose characteristic-free proof is based on Riemann-Roch (cf. \cite{lazarsfeld}, Theorem 2.2.15).

\begin{thm}\label{bigcr}
Let $D$ and $E$ be nef $\qq$-divisors on $X$ and assume that
$$D^2>2(D\cdot E).$$
Then $D-E$ is big.
\end{thm}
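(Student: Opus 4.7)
The plan is to reduce bigness of $D-E$ to an asymptotic Riemann-Roch estimate on $X$, replacing any appeal to vanishing theorems (which are unavailable in positive characteristic) by a direct nefness argument to kill the top cohomology.

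First, I would clear denominators, choosing $k \in \nn$ so that $kD$, $kE$, and hence $L := k(D-E)$, are integral Cartier divisors; bigness of $D-E$ is equivalent to bigness of $L$. Using $E^{2}\ge 0$ (since $E$ is nef) together with the hypothesis $D^{2}>2(D\cdot E)$, one immediately obtains $L^{2}=k^{2}(D-E)^{2}>0$, so Riemann-Roch on the smooth surface $X$ gives
$$\chi(X,\oo_{X}(mL)) = \frac{m^{2}}{2}L^{2} - \frac{m}{2}L\cdot K_{X} + \chi(\oo_{X}),$$
which tends to $+\infty$ like $m^{2}$ as $m\to\infty$.

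Next, to pass from $\chi$ to $h^{0}$, I would kill $h^{2}$ via Serre duality and a nefness pairing. By duality, $h^{2}(X,\oo_{X}(mL)) = h^{0}(X,\oo_{X}(K_{X}-mL))$, so if this is nonzero there is an effective divisor $G\sim K_{X}-mL$. Intersecting with the nef divisor $D$ forces $0\le G\cdot D = K_{X}\cdot D - m(L\cdot D)$. But the inequality $D^{2}>2(D\cdot E)$ combined with $D\cdot E\ge 0$ gives $L\cdot D = k(D^{2}-D\cdot E) > 0$, so this last inequality fails for every $m$ exceeding $(K_{X}\cdot D)/(L\cdot D)$, yielding $h^{2}(X,\oo_{X}(mL))=0$ for $m\gg 0$.

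Combining these two steps, for $m\gg 0$ we have
$$h^{0}(X,\oo_{X}(mL)) \ge \chi(X,\oo_{X}(mL)) - h^{2}(X,\oo_{X}(mL)) = \frac{m^{2}}{2}L^{2} + O(m),$$
which grows quadratically; hence $L$, and therefore $D-E$, is big. The only delicate point is the $h^{2}$-vanishing, since in positive characteristic we cannot call on Kawamata-Viehweg; the saving observation is that one only needs to rule out an \emph{effective} representative of $K_{X}-mL$, and pairing against the nef divisor $D$ with $L\cdot D>0$ accomplishes exactly this for all sufficiently large $m$.
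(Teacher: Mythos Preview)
Your argument is correct and is precisely the characteristic-free Riemann--Roch proof the paper alludes to (it does not spell out its own proof but cites \cite{lazarsfeld}, Theorem~2.2.15, noting the argument is Riemann--Roch based). The two key inputs---$(D-E)^{2}>0$ from $E^{2}\ge 0$ and $D^{2}>2(D\cdot E)$, and $h^{2}$-vanishing via Serre duality plus pairing an effective $K_{X}-mL$ against the nef class $D$ using $(D-E)\cdot D>0$---are exactly the standard ones, so there is nothing to add.
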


Before proving Theorem \ref{main}, we need some lemmas.

\begin{lemma} \label{trick}
Let $D$ be an ample divisor on $X$. Then $K_{X}+2D+C$ is nef for any irreducible curve $C\subset X$. 
\end{lemma}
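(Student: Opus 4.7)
The plan is to verify that $(K_X+2D+C)\cdot C'\geq 0$ on every irreducible curve $C'\subset X$; nefness then follows from the standard numerical criterion. I would split the argument according to whether $C'=C$ and according to the sign of $(C')^2$.

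First, for $C'=C$, I would apply adjunction directly: $(K_X+C)\cdot C=2p_a(C)-2\geq -2$, since $C$ is a prime divisor on a smooth surface so $p_a(C)\geq 0$. Together with $2D\cdot C\geq 2$ (because $D$ is ample Cartier, hence $D\cdot C\geq 1$), this gives $(K_X+2D+C)\cdot C=(K_X+C)\cdot C+2D\cdot C\geq 0$.

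For $C'\neq C$ one has $C\cdot C'\geq 0$, being the intersection of two distinct prime divisors on a smooth surface. If moreover $(C')^2\leq 0$, adjunction on $C'$ yields $K_X\cdot C'=2p_a(C')-2-(C')^2\geq -2$, and combining with $2D\cdot C'\geq 2$ and $C\cdot C'\geq 0$ closes the estimate.

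The remaining case $(C')^2>0$ is the main obstacle. Here $C'$ is automatically nef (an irreducible curve with positive self-intersection meets every other irreducible curve non-negatively) and big. I would invoke the Hodge index theorem applied to the ample $D$ and the positive class $C'$, namely $(D\cdot C')^2\geq D^2(C')^2$, and combine it with adjunction on $C'$ together with the integrality of the intersection numbers, to obtain the inequality $2D\cdot C'+C\cdot C'\geq (C')^2+2-2p_a(C')$. In the extremal situation where Hodge is an equality, $D$ is numerically proportional to $C'$, so by Nakai--Moishezon $C'$ itself is numerically ample; in particular $C\cdot C'\geq 1$, and this extra intersection compensates for the gap left by the pure Hodge bound.
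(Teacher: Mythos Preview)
Your treatment of the cases $C'=C$ and $C'\neq C$ with $(C')^2\leq 0$ is correct and matches the paper's adjunction step. The problem is the remaining case $(C')^2>0$: what you wrote there is a sketch, not an argument, and the sketch cannot be completed along the lines you indicate. The Hodge index inequality only yields
\[
D\cdot C'\;\geq\;\sqrt{D^2}\,\sqrt{(C')^2},
\]
a bound growing like $\sqrt{(C')^2}$, whereas what you need is
\[
2D\cdot C'+C\cdot C'\;\geq\;(C')^2+2-2p_a(C'),
\]
which is linear in $(C')^2$ when $p_a(C')$ is small. Concretely, on $X=\mathbb{P}^1\times\mathbb{P}^1$ take $D$ of type $(1,1)$ and $C'$ a smooth rational curve of type $(1,b)$, so $(C')^2=2b$ and $-K_X\cdot C'=2b+2$. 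The inequality you must prove reads $D\cdot C'\geq b+1$; it happens to hold with equality here, but Hodge delivers only $D\cdot C'\geq 2\sqrt{b}$, which is far too weak for large $b$. Adding the observation $C\cdot C'\geq 1$ in the Hodge-equality case contributes a bounded constant and does not bridge a linearly growing gap. So there is a genuine missing idea.

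The paper sidesteps this difficulty by first proving that $K_X+2D$ is \emph{already nef} whenever $X\neq\mathbb{P}^2$. This comes from the cone theorem together with the classification of extremal contractions of surfaces: every $K_X$-negative extremal ray is spanned by a rational curve $\ell$ with $-K_X\cdot\ell\leq 3$, and the value $3$ occurs only for a line in $\mathbb{P}^2$. After disposing of $\mathbb{P}^2$ by hand, one gets $(K_X+2D)\cdot C'\geq 0$ for every curve $C'$, so the only curve that could violate nefness of $K_X+2D+C$ is $C$ itself --- which is exactly your first (correct) case. The moral is that the global input you are missing is precisely the bound on the length of extremal rays; purely local tools like Hodge index and adjunction on $C'$ do not see it.
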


\begin{proof}
If $X=\pp^{2}$ then the lemma is trivial. By the cone theorem and the classification of surfaces with extremal rays of maximal length, we have that $K_{X}+2D$ is always a nef divisor. This implies that $K_{X}+2D+C$ may have negative intersection number only when intersected with $C$. On the other hand, by adjunction, $(K_{X}+C)\cdot C=2g-2\geq -2$, where $g$ is the arithmetic genus of $C$. Since $D$ is ample, the result follows.
\end{proof}

We can now prove on of the main results of this section (cf. \cite{lazarsfeld1}, Theorem 10.2.4).

\begin{thm}\label{matsu}
Let $D$ be an ample divisor and let $B$ be a nef divisor on $X$. Then $mD-B$ is nef for any 
$$
m\geq \frac{2D\cdot B}{D^{2}}\left((K_{X}+2D)\cdot D+1\right)+1.
$$
\end{thm}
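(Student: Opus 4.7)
The plan is to argue by contradiction: assume $mD-B$ is not nef and produce an irreducible curve $C\subset X$ with $(mD-B)\cdot C<0$, i.e.\ $B\cdot C>m(D\cdot C)$. Writing $d:=D\cdot C\geq 1$, $\alpha:=(D\cdot B)/D^{2}$ and $\beta:=(K_{X}+2D)\cdot D$, the task is to bound $B\cdot C$ from above in terms of $d,\alpha,\beta$ and force a contradiction once $m$ is large enough. The central tool will be Lemma~\ref{trick} applied to this very curve $C$, which guarantees that $L:=K_{X}+2D+C$ is nef; the same proof also yields that $K_{X}+2D$ is nef, so $\beta\geq 0$ and $(K_{X}+2D)^{2}\geq 0$.

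The main technical input is the Hodge index theorem, which I would invoke twice. First, applied to $B$ and $C$ with respect to $D$ (and using $B^{2}\geq 0$ because $B$ is nef), the Cauchy--Schwarz inequality on the negative-definite subspace $D^{\perp}$ produces
\[ B\cdot C \;\leq\; \alpha\bigl(d+u\bigr),\qquad u:=\sqrt{d^{2}-(D^{2})(C^{2})}. \]
Second, the same argument applied to the nef divisor $K_{X}+2D$ and to $C$ gives
\[ (K_{X}+2D)\cdot C\;\leq\;\frac{\beta}{D^{2}}(d+u). \]
Now Lemma~\ref{trick} enters decisively: since $L=(K_{X}+2D)+C$ is nef and $C$ is an irreducible curve, $L\cdot C\geq 0$, equivalently $(K_{X}+2D)\cdot C\geq -C^{2}$. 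Combined with the previous display this reads $-(D^{2})(C^{2})\leq \beta(d+u)$, i.e.\ $u^{2}-d^{2}\leq \beta(d+u)$, and dividing by $u+d>0$ yields the key bound $u\leq d+\beta$.

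Substituting back into the first Hodge estimate gives $B\cdot C\leq \alpha(2d+\beta)$, which together with $md<B\cdot C$ forces
\[ m\;<\;2\alpha+\frac{\alpha\beta}{d}\;\leq\;2\alpha+\alpha\beta, \]
contradicting the hypothesis $m\geq 2\alpha(\beta+1)+1$, since the two sides differ by $\alpha\beta+1>0$. The hard part is precisely the control on the Hodge correction $u$: a priori $C^{2}$ could be very negative, making $u$ arbitrarily large and both Hodge estimates useless. It is Lemma~\ref{trick} — the nefness of $K_{X}+2D+C$ for \emph{every} irreducible curve $C$ — that furnishes the needed lower bound on $C^{2}$ through the elementary identity $L\cdot C\geq 0$, and thereby makes the two applications of Hodge index fit together.
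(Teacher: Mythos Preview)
Your argument is correct. Both proofs ultimately rest on Lemma~\ref{trick}, but the route is genuinely different from the paper's. The paper works with the nef threshold $\eta=\inf\{t:tD-B\text{ nef}\}$ and the pseudo-effective threshold $\gamma=\inf\{t:tD-B\text{ pseff}\}$: at the boundary either $(\eta D-B)^{2}=0$ (giving $\eta\leq 2\alpha$ directly) or there is a curve $C$ with $(\eta D-B)\cdot C=0$, and then one pairs the nef class $K_{X}+2D+C$ against the pseudo-effective class $G=\gamma D-B$ to get $\eta-\gamma\leq (K_{X}+2D)\cdot G$, bounding $\gamma$ afterwards by the numerical bigness criterion (Theorem~\ref{bigcr}). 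No Hodge index appears explicitly. Your approach instead fixes a bad curve from the start and replaces the threshold/bigness machinery by two Cauchy--Schwarz estimates in $D^{\perp}$; Lemma~\ref{trick} then enters to bound $u$ via $-C^{2}\leq (K_{X}+2D)\cdot C$. This is more hands-on but has the pleasant side effect of yielding a sharper inequality: you actually obtain $m<\alpha(\beta+2)$, which is stronger than the paper's $\eta<2\alpha(\beta+1)$ by $\alpha\beta$. The paper's proof, on the other hand, is shorter and more conceptual, and makes transparent why the pseudo-effective threshold (hence the bigness criterion) governs the problem.
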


\begin{proof}
To simplify the notation in the proof let us define the following numbers: 
\begin{align}\notag
&\eta=\eta(D,B):=\inf\left\{t\in\rr_{>0}\:|\: t D- B \;\text{is nef}\right\}, \\ \notag
&\gamma=\gamma(D,B):=\inf\left\{t\in\rr_{>0}\:|\: t D- B \;\text{is pseudo-effective}\right\}
\end{align}

The theorem will follow if we find an upper bound on $\eta$. Note that $\gamma\leq \eta$ since a nef divisor is also pseudo-effective.

By definition $\eta D -B$ is in the boundary of the nef cone and by Nakai's theorem we have two possible cases: either
\begin{itemize}
\item $(\eta D-B)^{2}=0$, or 
\item there exists an irreducible curve $C$ such that $(\eta D-B)\cdot C=0$. 
\end{itemize}
If $(\eta D-B)^{2}=0$, then it is easy to see that 
$$
\eta\leq 2\frac{D\cdot B}{D^{2}}.
$$
So we can assume that there exists an irreducible curve $C$ such that $\eta D\cdot C=B\cdot C$. Let us define $G:=\gamma D-B$. Then
$$
G\cdot C=(\gamma-\eta)D\cdot C\leq (\gamma-\eta).
$$
Let us define $A:=K_{X}+2D$. By Lemma \ref{trick} and the definition of $G$, we have that $(A+C)\cdot G\geq 0$. Combining with the previous inequality we get 
$$
(\eta-\gamma)\leq -G\cdot C\leq A\cdot G= \gamma A\cdot D-A\cdot B.
$$
In particular, 
$$\eta\leq \gamma (A\cdot D+1)-A\cdot B\leq \gamma (A\cdot D+1).$$ 
The statement of our result follows from Theorem \ref{bigcr}, which guarantees that $\gamma< \frac{2D\cdot B}{D^{2}}$.

\end{proof}

\begin{rmk}
The previous proof is characteristic-free, although the new result is for surfaces in positive characteristic.
\end{rmk}

We can now prove our main theorem, assuming the results in the next section.

\begin{proof}[Proof of Theorem \ref{main}]
By Corollary \ref{unir}, if $X$ is neither of general type nor quasi-elliptic and $H=K_X+4D$ then $H+N$ is very ample for any nef divisor $N$. By Theorem \ref{matsu}, $mD-(H+B)$ is nef for any $m$ as in the statement. Then $K_X+4D+(mD-K_X-4D-B)$ is very ample. For surfaces in the other classes use Theorem \ref{main_va_qell} and Theorem \ref{mainc} to obtain the desired very ample divisor $H$.
\end{proof}

\section{Effective very ampleness in positive characteristic}\label{6}

The aim of this section is to complete the proof of Theorem \ref{main} for quasi-elliptic surfaces of Kodaira dimension one and for surfaces of general type. Our ultimate goal is to prove Theorem \ref{main2} via a case-by-case analysis.

First we need some notation (cf. Theorem \ref{sakai_1}).

\begin{definition}
A big divisor $D$ on a smooth surface $X$ with $(D^2)>0$ is $m$-unstable for a positive integer $m$ if either 
\begin{itemize}
\item $H^1(X,\oo_X(-D))= 0$; or  
\item $H^1(X,\oo_X(-D))\neq 0$ and  there exists a nonzero effective divisor $E$ such that
\begin{itemize}
\item $mD - 2E$ is big; 
\item $(mD-E)\cdot E \le 0$.
\end{itemize}
\end{itemize}
\end{definition}

\begin{rmk}
Theorem \ref{sakai_1} tells us that in characteristic zero every big divisor $D$ on a smooth surface $X$ with $(D^2)>0$ is $1$-unstable. The same holds in positive characteristic, if we assume that the surface is neither of general type nor quasi-elliptic of maximal Kodaira dimension: this is a consequence of Corollary \ref{unir}. Our goal here is to clarify the picture in the remaining cases.
\end{rmk}

We can start our analysis with quasi-elliptic surfaces of maximal Kodaira dimension.

\begin{proposition}\label{q_ell}
Let $X$ be a quasi-elliptic with $\kappa(X)=1$ and let $D$ be a big divisor on $X$ with $(D^2)>0$. Then
\begin{enumerate}
\item if $p=3$, then $D$ is $3$-unstable;
\item if $p=2$, then $D$ is $4$-unstable.
\end{enumerate}
\end{proposition}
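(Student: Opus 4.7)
The plan is to run an $F^{e*}$-enhanced version of the Sakai--Bogomolov proof of Theorem \ref{sakai_1}, and then bound the Frobenius exponent $e$ by applying bend-and-break to the quasi-elliptic fibration on $X$. First I would reduce to the case $H^1(X,\oo_X(-D))\neq 0$, since otherwise $D$ is $m$-unstable by definition. A nonzero class $\gamma\in H^1(X,\oo_X(-D))$ yields a non-split extension
$$0\to\oo_X\to\mathcal{E}\to\oo_X(D)\to 0,$$
for which $c_1^2(\mathcal{E})-4c_2(\mathcal{E})=D^2>0$, so Bogomolov's inequality fails on $\mathcal{E}$.

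If $\mathcal{E}$ is already unstable, the argument of \cite[Proposition 1]{sakai} (parallel to Remark \ref{rem_eff}, but applied to $\mathcal{E}$ itself rather than to its Frobenius pullback) produces an effective divisor $E$ with $D-2E$ big and $(D-E)\cdot E\le 0$, so $D$ is $1$-unstable. Otherwise $\mathcal{E}$ is semi-stable, and Theorem \ref{pe_unstable} gives a smallest $e\ge 1$ with $F^{e*}\mathcal{E}$ unstable, together with a $p^e$-purely inseparable cover $\rho\colon Y\to X$ and, via Remark \ref{rem_eff} plus the Sakai argument applied to $F^{e*}\mathcal{E}$, an effective divisor $E$ on $X$ satisfying $p^eD-2E$ big and $(p^eD-E)\cdot E\le 0$. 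Thus $D$ is $p^e$-unstable. Since any $m_0$-unstable divisor is also $(km_0)$-unstable with witness $kE$ (bigness of $km_0D-2kE$ follows by scaling, and the second inequality scales by $k^2$), it then suffices to prove $p^e\le 3$ when $p=3$ and $p^e\le 4$ when $p=2$.

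To bound $p^e$, I would exploit the quasi-elliptic fibration $\pi\colon X\to B$ coming from $\kappa(X)=1$: its generic fibre $F$ is a cuspidal rational curve with $F^2=0$ and $K_X\cdot F=0$. The purely inseparable cover $\rho\colon Y\to X$ pulls the fibration back to a fibration $Y\to B$ whose generic fibre $\tilde F$ is again rational, being a purely inseparable image of $F$. The canonical bundle formula
$$K_Y\equiv\rho^{*}\bigl(K_X-\tfrac{p^e-1}{p^e}(p^eD-2E)\bigr)$$
of Construction \ref{constr}, combined with Kodaira's lemma for the big divisor $p^eD-2E$ and the nefness of $F$, gives $(K_Y\cdot\tilde F)<0$ with magnitude controlled by a positive multiple of $p^e-1$. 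Corollary \ref{b-a-b-cor} then forces $-(K_Y\cdot\tilde F)\le 3$, and comparing the two estimates yields the desired bound on $p^e$; the asymmetry between $p=2$ and $p=3$ reflects the different shapes of cuspidal quasi-elliptic fibres in these characteristics. The hardest step will be this final comparison: carrying out the intersection computation sharply enough to extract the exact bounds, and verifying the local complete intersection hypothesis of Corollary \ref{b-a-b-cor} along $\tilde F$, which is delicate because the cover $Y$ is typically singular.
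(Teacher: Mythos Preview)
Your outline matches the paper's proof almost exactly: form the extension, pull back by Frobenius until it destabilises, read off the effective divisor $E$ from Remark \ref{rem_eff}, pull the quasi-elliptic fibration up to $Y$, and bound $-(K_Y\cdot\tilde F)$ by $3$ via Corollary \ref{b-a-b-cor}. The paper handles the LCI hypothesis exactly as you anticipate (hypersurface singularities, since $Y$ sits as a divisor in a $\pp^1$-bundle), and uses Stein factorisation to get a genuine fibration on $Y$.

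One point to correct: the asymmetry between $p=2$ and $p=3$ has nothing to do with the geometry of cuspidal fibres in the two characteristics. The computation is identical in both cases. Since $K_X\cdot F=0$ and $(p^eD-2E)\cdot F\ge 1$ (a big divisor meeting a moving nef curve in a positive integer), one gets
\[
3\ge -(K_Y\cdot\tilde F)=(p^e-1)\bigl((p^eD-2E)\cdot F\bigr)\ge p^e-1,
\]
so $p^e\le 4$ in every characteristic. The asymmetry is then purely arithmetic: for $p=3$ this forces $e=1$, hence $p^e=3$; for $p=2$ it allows $e\le 2$, hence $p^e\le 4$. So the ``hardest step'' you flag is in fact a one-line inequality, and you should not look for a characteristic-dependent geometric input that is not there.
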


\proof
Assume that $p=3$ and $H^1(X,\oo_X(-D))\neq 0$. This nonzero element gives a non-split extension
$$0\rightarrow \oo_X \rightarrow \mathcal{E} \rightarrow \oo_X(D)\rightarrow 0.$$
Theorem \ref{pe_unstable} implies that $(F^*)^e\mathcal{E}$ is unstable for $e$ sufficiently large.
To prove the proposition in this case we need to show that $e=1$. Assume $e\ge 2$ and let $F$ be the general element of the pencil which gives the fibration in cuspidal curves $f\colon X\rightarrow B$. Let $\rho\colon Y \rightarrow X$ be the $p^e$-purely inseparable morphism of Construction \ref{constr}. Then $\{C_i:=\rho^*F\}$ is a family of movable rational curves in $Y$. Let us define $g:= f \circ \rho$ and consider its Stein factorisation:
$$\xymatrix{
Y \ar[dr]^h \ar[d]_\rho \ar@/_{2.0pc}/[dd]_g \\
X  \ar[d]_f & B' \ar[dl]^c \\
B \\}.$$
Since the family $\{C_i\}$ are precisely the fibres of $h$, we can use Corollary \ref{b-a-b-cor} on $h\colon Y\rightarrow B'$ ($Y$ is defined via a quasi-section in a $\pp^1$-bundle over $X$, so it has hyper surface singularities along the general element of $\{C_i\}$) and deduce that
$$0<-(K_Y\cdot C_i)\le 3.$$
This gives a contradiction, since
$$3\ge-(K_Y\cdot C_i) = \bigg(\rho^*\bigg(\frac{p^e-1}{p^e}(p^eD-2E)-K_X\bigg)\cdot C_i\bigg) = p^e\bigg(\bigg(\frac{p^e-1}{p^e}(p^eD-2E)-K_X\bigg)\cdot  F\bigg)$$
$$=((p^e-1)(p^eD-2E)\cdot F)\ge p^e-1 \ge 8,$$
where $E$ is the one appearing in Remark \ref{rem_eff}.

The same proof works for $p=2$, although in this case we can only prove that $e\le 2.$
\endproof

We can now focus on the general type case. We need the following theorem by Shepherd-Barron (cf. \cite[Theorem 12]{shepherdbarron}).

\begin{thm}\label{eff_reid}
Let $D$ be a big Cartier divisor on a smooth surface $X$ of general type which verifies one of the following hypothesis:
\begin{itemize}
\item $p\ge 3$ and $(D^2)>\vol(X);$
\item $p=2$ and $(D^2)>\max\{\vol(X),\vol(X)-3\chi(\oo_X)+2\}.$
\end{itemize}
Then $D$ is $1$-unstable.
\end{thm}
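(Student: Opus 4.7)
The plan is to adapt Sakai's characteristic-zero argument behind Theorem \ref{sakai_1} by controlling when a Frobenius pullback is really necessary, using the structural result of Shepherd-Barron cited just before Corollary \ref{unir}. First I would assume $H^1(X,\oo_X(-D))\neq 0$, for otherwise $D$ is trivially $1$-unstable. A nonzero class produces a non-split extension
$$0\to \oo_X\to \mathcal{E}\to \oo_X(D)\to 0,$$
so $\mathcal{E}$ has $c_1(\mathcal{E})=D$, $c_2(\mathcal{E})=0$, and hence $c_1^2(\mathcal{E})-4c_2(\mathcal{E})=D^2>0$: Bogomolov's inequality fails for $\mathcal{E}$.

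The argument then splits according to whether $\mathcal{E}$ is itself unstable. If it is, a destabilizing sequence
$$0\to \oo_X(D_1)\to \mathcal{E}\to \mathcal{I}_Z\cdot \oo_X(D_2)\to 0$$
gives $E:=D_2\geq 0$, since the composition $\oo_X\to \mathcal{E}\to \oo_X(D_2)$ is nonzero by non-splitness (exactly as in Remark \ref{rem_eff}). Then $D-2E=D_1-D_2$ is big by Definition \ref{inst_bog}, and computing $c_2(\mathcal{E})$ directly from this sequence yields $D_1\cdot D_2+\deg Z=0$, whence
$$(D-E)\cdot E=D_1\cdot D_2=-\deg Z\leq 0,$$
so $D$ is $1$-unstable.

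The main obstacle is to rule out the semistable case, and this is precisely where the numerical hypothesis on $(D^2)$ enters. Suppose $\mathcal{E}$ is semistable. Since $X$ is of general type and
$$D^2=c_1^2(\mathcal{E})-4c_2(\mathcal{E})>\vol(X)\geq \frac{\vol(X)}{(p-1)^2}$$
for every $p\geq 2$, the Shepherd-Barron theorem cited before Corollary \ref{unir} forces the purely inseparable cover $Y$ of Construction \ref{constr} to be uniruled. To derive a contradiction I would mimic Proposition \ref{q_ell}: combining the canonical-bundle formula
$$K_Y\equiv \rho^*\Big(K_X-\tfrac{p^e-1}{p^e}D'\Big),\qquad D':=p^eD-2E,$$
with Corollary \ref{b-a-b-cor} applied to a general rational curve $C$ of the ruling of $Y$, yields $-(K_Y\cdot C)\leq 3$, which pushes forward to an explicit numerical inequality on $X$. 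Combining this with the slope constraint imposed by semistability on the ruling and with Riemann-Roch on the minimal model of $X$ should force $D^2\leq \vol(X)$, contradicting the hypothesis.

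The dichotomy between $p\geq 3$ and $p=2$ exactly mirrors Proposition \ref{q_ell}: for $p\geq 3$ one can take $e=1$, which produces the clean bound $D^2>\vol(X)$, whereas for $p=2$ one only gets $e\leq 2$, and the weaker control of the Frobenius step introduces an Euler-characteristic correction in the numerics. Tracking this correction through the bend-and-break inequality via a Noether-type bound such as $\vol(X)\geq 2\chi(\oo_X)-6$ on the minimal model is what should produce the additional $-3\chi(\oo_X)+2$ term. The hard step throughout is this last one: converting the geometric uniruledness of $Y$ into the sharp numerical inequality on $X$; everything else reduces to bookkeeping on the constructions already collected in Section 2.
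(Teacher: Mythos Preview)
The paper does not prove this theorem at all: it is quoted verbatim as \cite[Theorem 12]{shepherdbarron} and used as a black box. So there is no ``paper's own proof'' to compare against; your proposal is an attempt to reconstruct Shepherd-Barron's argument from the tools assembled in Section 2.

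Your treatment of the case where $\mathcal{E}$ is already unstable is fine and is exactly Sakai's argument. The gap is in the semistable case. You invoke the structural theorem before Corollary \ref{unir} to conclude that $Y$ is uniruled, and then propose to apply Corollary \ref{b-a-b-cor} to ``a general rational curve $C$ of the ruling of $Y$''. But uniruledness of $Y$ does not hand you a fibration $Y\to B'$ to which that corollary applies; and even granting one after a birational modification, the inequality $-(K_Y\cdot C)\le 3$ only pushes forward to
$$\Big(\tfrac{p^e-1}{p^e}D'-K_X\Big)\cdot \rho_*C\le 3,$$
for some unknown curve $\rho_*C$ on $X$. Unlike in Proposition \ref{q_ell}, this curve does not come from a pre-existing fibration on $X$, so you have no control over $(K_X\cdot \rho_*C)$ or $(D'\cdot \rho_*C)$ separately, and no mechanism to extract a bound of the shape $D^2\le \vol(X)$ from it. The analogy with Proposition \ref{q_ell} is misleading: there the fibration lives on $X$ and is pulled back to $Y$, which is what makes the numerics rigid; here you are starting from curves on $Y$ with no structure on $X$ to anchor them.

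The $p=2$ correction is likewise only a hope: you cite a Noether-type inequality and assert it ``should produce'' the $-3\chi(\oo_X)+2$ term, but nothing in your outline actually produces it. Since the sharp constants are the entire content of the statement, and since you yourself flag this conversion as ``the hard step'', what you have is a plausible plan for the easy half plus a wish for the hard half. Shepherd-Barron's actual proof proceeds differently, via a direct comparison of discriminants on the inseparable cover rather than bend-and-break on a ruling; you would need to consult \cite[Theorem 12]{shepherdbarron} to see how the numerical bounds are obtained.
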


Since the bound of the previous theorem depends on $\chi(\oo_X)$ if $p=2$, we need an additional result for this case. First we recall a result by Shepherd-Barron (cf. \cite[Theorem 8]{shepherdbarron1}).

\begin{thm}\label{sb_char_2}
Let $X$ be a surface in characteristic $p=2$ of general type with $\chi(\oo_X)<0$. Then there is
a fibration $f\colon X\rightarrow C$ over a smooth curve $C$, whose generic fibre is a singular rational curve with arithmetic genus $2\le g\le 4$.
\end{thm}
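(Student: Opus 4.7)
The approach is to produce a fibration over a curve from $\chi(\oo_X)<0$ and then analyse its generic fibre using characteristic-$2$ specific techniques.

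First, from $\chi(\oo_X)=1-q+p_g<0$ we deduce that the irregularity satisfies $q(X)\geq p_g(X)+2\geq 2$, so the Albanese morphism $a\colon X\to \mathrm{Alb}(X)$ is non-trivial. I would then rule out that $a$ has two-dimensional image: for a smooth surface of general type with maximal Albanese dimension, a positivity argument for $\Omega^1_X$ (a Severi-type inequality or its characteristic-$p$ analogue, combined with bigness of $K_X$) would force $\chi(\oo_X)\geq 0$, contradicting the hypothesis. Hence the Albanese image is a curve, and the Stein factorisation of $a$ yields a fibration $f\colon X\to C$ onto a smooth curve $C$ with connected fibres.

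Next, I would analyse the generic fibre $F$ of $f$. If $f$ were separable, then $F$ would be smooth of some arithmetic genus $g$. The cases $g\leq 1$ are excluded because $X$ is of general type (otherwise $X$ would be uniruled or admit an elliptic fibration forcing $\kappa(X)\leq 1$). For $g\geq 2$ with $f$ separable, standard positivity results for $f_*\omega_{X/C}$ (nefness, combined with the slope inequality) yield $\chi(\oo_X)\geq 0$, again a contradiction. Consequently $f$ must be inseparable. Since $p=2$, the fibration factors through the absolute Frobenius, and a refined analysis of the purely inseparable degree-$2$ residue field extension shows that the geometric generic fibre has geometric genus zero, so $F$ is a singular rational curve of some arithmetic genus $g\geq 1$.

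Finally, I would establish the bounds $2\leq g\leq 4$. The lower bound $g\geq 2$ follows because $g=1$ would make $f$ a quasi-elliptic fibration, forcing $\kappa(X)\leq 1$. The upper bound $g\leq 4$ is the main obstacle: it requires identifying the inseparable fibration with a $1$-foliation $\mathcal{F}\subset T_X$, computing the local invariants of $\mathcal{F}$ at its singularities, and combining these with the global Chern class relations on $X$ and the input $\chi(\oo_X)<0$. The hard work lies in the characteristic-$2$ foliation calculus: one must enumerate the possible singularity types contributing to the arithmetic genus of a singular rational fibre, extract the resulting numerical identity involving $g$, $K_X^2$ and $\chi(\oo_X)$, and verify that the pathological regime $\chi(\oo_X)<0$ is compatible only with $g\in\{2,3,4\}$.
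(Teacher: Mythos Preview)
The paper does not prove this statement at all: it is quoted verbatim as a result of Shepherd-Barron, with the reference \cite[Theorem 8]{shepherdbarron1}, and then used as a black box in the proof of Proposition~\ref{char_2}. So there is no in-paper argument to compare your proposal against.

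That said, your sketch has genuine gaps precisely at the characteristic-$p$ pressure points. The two places where you invoke ``standard positivity'' are both characteristic-zero facts that are \emph{not} available in the form you need. First, the claim that maximal Albanese dimension together with bigness of $K_X$ forces $\chi(\oo_X)\geq 0$ relies on generic vanishing or a Severi-type inequality; neither was known in characteristic $p$ at the time of Shepherd-Barron's paper, and the naive statement can fail. Second, for a separable fibration with $g\geq 2$ you appeal to nefness of $f_*\omega_{X/C}$ to get $\chi(\oo_X)\geq 0$; but semipositivity of $f_*\omega_{X/C}$ is precisely one of the things that can fail in positive characteristic (Raynaud's examples), so this step does not go through as written. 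Shepherd-Barron's actual argument bypasses these positivity statements entirely and works directly with the $1$-foliation associated to the failure of $\chi(\oo_X)\geq 0$, via Ekedahl's analysis of foliation singularities in characteristic $2$.

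For the bound $g\leq 4$ you correctly identify the mechanism (foliation calculus and the local contributions at the singular points of $\mathcal{F}$), but what you have written is a description of the method rather than a proof; the actual numerical extraction of $g\in\{2,3,4\}$ from the Chern-class identities is the content of Shepherd-Barron's argument and is not reproduced here.
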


We can prove now our result.

\begin{proposition}\label{char_2}
Let $D$ be a big Cartier divisor on a surface in characteristic $p=2$ of general type with $\chi(\oo_X)<0$ such that $(D^2)>\vol(X)$. Then $D$ is $4$-unstable.
\end{proposition}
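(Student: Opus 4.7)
The plan is to adapt the argument of Proposition \ref{q_ell}, with the fibration produced by Theorem \ref{sb_char_2} playing the role of the quasi-elliptic fibration.

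If $H^1(X,\oo_X(-D))=0$ the conclusion is immediate, so assume this group is nonzero and form the non-split extension
\[
0\to \oo_X \to \mathcal{E}\to \oo_X(D)\to 0.
\]
A direct Chern class computation gives $c_1^2(\mathcal{E})-4c_2(\mathcal{E})=D^2>\vol(X)>0$, so Bogomolov's inequality fails for $\mathcal{E}$. By Theorem \ref{pe_unstable} together with Remark \ref{rem_eff}, there exist a minimal integer $e\ge 1$ and an effective divisor $E\ge 0$ such that $D':=p^eD-2E$ is big, $(p^eD-E)\cdot E\le 0$, and $F^{e*}\mathcal{E}$ is destabilised by $\oo_X(p^eD-E)$. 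To prove $4$-instability it suffices to show $e\le 2$, so I assume $e\ge 3$ aiming at a contradiction.

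Since $\chi(\oo_X)<0$ and $p=2$, Theorem \ref{sb_char_2} supplies a fibration $f\colon X\to C$ whose general fibre $F$ is a singular rational curve of arithmetic genus $2\le g\le 4$; in particular $F^2=0$ and $K_X\cdot F=2g-2$. Let $\rho\colon Y\to X$ be the $p^e$-purely inseparable cover from Construction \ref{constr} and consider the Stein factorisation $Y\xrightarrow{h} B'\xrightarrow{c} C$ of $f\circ\rho$, exactly as in the proof of Proposition \ref{q_ell}. The general fibre $C_i$ of $h$ is set-theoretically the preimage $\rho^{-1}(F)$, hence topologically a (possibly singular) rational curve, and $Y$ has hypersurface singularities along $C_i$ (being a quasi-section of a $\pp^1$-bundle), so Corollary \ref{b-a-b-cor} becomes available as soon as $K_Y\cdot C_i<0$.

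Using the canonical bundle formula of Construction \ref{constr} and the projection formula one computes
\[
-K_Y\cdot C_i \;=\; (p^e-1)\, D'\cdot F \;-\; p^e(2g-2),
\]
and Corollary \ref{b-a-b-cor} would then force $-K_Y\cdot C_i\le 3$. The desired contradiction follows once the sharp lower bound $D'\cdot F\ge 2g$ is established: plugging it into the identity above gives
\[
-K_Y\cdot C_i \;\ge\; 2g(p^e-1)-p^e(2g-2)\;=\;2p^e-2g\;\ge\; 16-2g\;\ge\;8>3.
\]
The main obstacle is precisely this inequality $D'\cdot F\ge 2g$. The naive estimate $D'\cdot F\ge 1$, coming from the bigness of $D'$ and the fact that $F$ is a moving nef curve, is insufficient in the general-type range where $K_X\cdot F=2g-2$ can be as large as $6$. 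To upgrade it one has to combine the bigness of $p^eD-2E$ with the inequality $(p^eD-E)\cdot E\le 0$, decompose $E$ into its vertical and horizontal components relative to $f$, and use $F^2=0$ via the Hodge index theorem; once the bound $D'\cdot F\ge 2g$ is in place, the remainder of the argument is formally identical to the char-$2$ case of Proposition \ref{q_ell}.
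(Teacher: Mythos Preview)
Your setup matches the paper's exactly through the construction of $\rho\colon Y\to X$, the Stein factorisation, and the formula
\[
-K_Y\cdot C_i \;=\; (2^e-1)\,D'\cdot F \;-\; 2^e(2g-2).
\]
The divergence, and the gap, is in how you try to force $e\le 2$. You want the direct estimate $-K_Y\cdot C_i>3$ for $e\ge 3$, and you correctly identify that this would follow from $D'\cdot F\ge 2g$. But you do not prove that inequality, and the tools you list (bigness of $D'$, the instability inequality $(p^eD-E)\cdot E\le 0$, splitting $E$ into horizontal/vertical parts, Hodge index) give no control involving the genus $g$ of $F$; they only bound things like $D'\cdot F\ge 1$. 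There is no evident mechanism by which the number $2g$ should enter, so this is a genuine missing idea, not a routine omission. Note also that even the applicability of Corollary~\ref{b-a-b-cor} (which needs $K_Y\cdot C_i<0$) remains contingent on this unproven bound.

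The paper avoids this entirely with a parity/divisibility trick. Since $p=2$, the number $D'\cdot F=(2^eD-2E)\cdot F$ is automatically even, hence so is $-K_Y\cdot C_i$. Granting $0<-K_Y\cdot C_i\le 3$ from bend-and-break, the only possibility is $-K_Y\cdot C_i=2$, which rewrites as
\[
(2^e-1)\bigl((2^{e-1}D-E)\cdot F\bigr)\;=\;2^e(g-1)+1.
\]
Reducing modulo $2^e-1$ (using $2^e\equiv 1$) gives $(2^e-1)\mid g$, and since $2\le g\le 4$ this forces $2^e-1\le 4$, i.e.\ $e\le 2$. So the paper turns the bend-and-break bound into an \emph{equation} and exploits arithmetic, rather than trying to push the intersection number past~$3$.
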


\proof
Assume that $H^1(X,\oo_X(-D))\neq 0$. As in the proof of Proposition \ref{q_ell} we have a non-split extension
$$0\rightarrow \oo_X \rightarrow \mathcal{E} \rightarrow \oo_X(D)\rightarrow 0.$$
Using Theorem \ref{pe_unstable} we deduce the instability of $(F^*)^e\mathcal{E}$ for $e$ sufficiently large.
Let $F$ be the general element of the pencil which gives the fibration in singular rational curves given by Theorem \ref{sb_char_2}. Let $\rho\colon Y \rightarrow X$ be the $p^e$-purely inseparable morphism of Construction \ref{constr}. Like in the proof of Proposition \ref{q_ell}, we use Corollary \ref{b-a-b-cor} on $Y$ and deduce that $0<-(K_Y\cdot C_i)\le 3.$
This gives
$$3\ge-(K_Y\cdot C_i) = \bigg(\rho^*\bigg(\frac{2^e-1}{2^e}(2^eD-2E)-K_X\bigg)\cdot C_i\bigg) = 2^e\bigg(\bigg(\frac{2^e-1}{2^e}(2^eD-2E)-K_X\bigg)\cdot  F\bigg)$$
$$=(((2^e-1)(2^eD-2E)-2^eK_X)\cdot F)\ge 1.$$
This implies that
$$(((2^e-1)(2^{e-1}D-E)-2^{e-1}K_X)\cdot F)= 1.$$
As a consequence, we apply Theorem \ref{sb_char_2} to bound the intersection $(K_X\cdot F)$:
$$(2^e-1)((2^{e-1}D-E)\cdot F) = 2^{e}(g-1)+1,$$
where $g$ is the arithmetic genus of $F$.
Some basic arithmetic give that the only possibilities for the pair $(g,e)$ are $(2,1)$, $(3,1)$, $(3,2)$ and $(4,1)$.

\endproof


We will use Theorem \ref{eff_reid} to prove a variant of Reider's theorem in positive characteristic. We state a technical proposition 
we will need later (cf. \cite[Proposition 2]{sakai}). 

\begin{proposition}
Let $\pi:Y\rightarrow X$ be a birational morphism between two normal surfaces. Let $\tilde{D}$ be a Cartier divisor on $Y$ such that 
$\tilde{D}^{2}>0$. Assume there is a nonzero effective divisor $\tilde{E}$ such that 
\begin{itemize}
\item $\tilde{D}-2\tilde{E}$ is big and 
\item $(\tilde{D}-\tilde{E})\cdot \tilde{E}\leq 0$.
\end{itemize}
Set $D:=\pi_{*}\tilde D$, $E:=\pi_{*}\tilde{E}$ and $\alpha= D^{2}-\tilde{D}^{2}$. If $D$ is nef and $E$ is a non-zero effective divisor, then
\begin{itemize}
\item $0\leq D\cdot E < \alpha/2$, 
\item $D\cdot E -\alpha/4 \leq E^{2} \leq (D\cdot E)^{2} / D^{2}$.
\end{itemize} 
\end{proposition}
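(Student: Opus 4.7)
The plan is to pull everything back to $Y$ (which we may assume smooth after a further resolution, using Mumford's $\qq$-pullback if $Y$ is only normal) and reduce the statement to the Hodge Index Theorem together with the negative definiteness of the intersection form on the $\pi$-exceptional locus. Write $\pi^{*}D=\tilde D+F$ and $\pi^{*}E=\tilde E+G$ with $F,G$ being $\pi$-exceptional $\qq$-divisors. The identities $\pi^{*}D\cdot F_{i}=0$ on each exceptional prime $F_{i}$, together with the projection formula, give $\tilde D\cdot F=-F^{2}=\alpha$, $E^{2}=\tilde E^{2}-G^{2}$, and $F\cdot\tilde E=-F\cdot G$ (from $F\cdot\pi^{*}E=0$), with $F^{2},G^{2}\le0$.

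The non-negativity $D\cdot E=\pi^{*}D\cdot\tilde E\ge0$ is immediate. For the upper bound $D\cdot E<\alpha/2$, I would first observe that the hypothesis $(\tilde D-\tilde E)\tilde E\le0$ gives
\bdism
(\tilde D-2\tilde E)^{2}=\tilde D^{2}+4(\tilde E^{2}-\tilde D\tilde E)\ge\tilde D^{2}>0,
\edism
so $\tilde D-2\tilde E$ is big with positive self-intersection, while $\pi^{*}D$ is nef and big (since $D^{2}\ge\tilde D^{2}>0$); hence $(\tilde D-2\tilde E)\cdot\pi^{*}D>0$, which simplifies to $D^{2}-2D\cdot E>0$. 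Applying Hodge Index on $Y$ to $(\tilde D-2\tilde E,\pi^{*}D)$ yields
\bdism
(D^{2}-2D\cdot E)^{2}\ge(\tilde D-2\tilde E)^{2}(\pi^{*}D)^{2}\ge\tilde D^{2}\cdot D^{2},
\edism
which solves to $D\cdot E\le\tfrac{1}{2}(D^{2}-\sqrt{D^{2}\tilde D^{2}})<\alpha/2$ as soon as $\alpha>0$. To exclude $\alpha=0$: in that case $\pi^{*}D=\tilde D$ is nef and big with $\tilde D\cdot\tilde E=0$, and Hodge Index forces the effective divisor $\tilde E$ to be numerically trivial, hence $E=\pi_{*}\tilde E=0$, contradicting the hypothesis.

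For the upper bound on $E^{2}$, Hodge Index on $Y$ applied to $\pi^{*}D$ and $\pi^{*}E$ gives $(D\cdot E)^{2}=(\pi^{*}D\cdot\pi^{*}E)^{2}\ge D^{2}\cdot E^{2}$. For the lower bound, I would expand
\bdism
E^{2}-D\cdot E=(\tilde E^{2}-\tilde D\tilde E)-G^{2}-F\cdot\tilde E\ge F\cdot G-G^{2},
\edism
using $\tilde E^{2}\ge\tilde D\tilde E$ and $F\cdot\tilde E=-F\cdot G$; since the intersection form on $\pi$-exceptional divisors is negative definite, $(G-F/2)^{2}\le 0$, which rearranges to $F\cdot G-G^{2}\ge F^{2}/4=-\alpha/4$, and the desired inequality follows.

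The main obstacle will be the strict inequality $D\cdot E<\alpha/2$: it requires both a Hodge-type quadratic bound on $(D^{2}-2D\cdot E)^{2}$ and a separate exclusion of the degenerate case $\alpha=0$ via the non-triviality of $E$. By contrast, the remaining bounds are almost formal once the pullback bookkeeping is set up, the upper bound on $E^{2}$ being pure Hodge Index and the lower bound being essentially a rearrangement of $(G-F/2)^{2}\le 0$ coming from the negative definiteness of the exceptional lattice.
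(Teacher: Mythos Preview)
The paper does not supply its own proof of this proposition; it merely cites Sakai \cite[Proposition~2]{sakai}. Your argument is correct and is essentially the standard proof: decompose $\pi^{*}D=\tilde D+F$ and $\pi^{*}E=\tilde E+G$ with $F,G$ exceptional, then use the Hodge Index Theorem together with negative definiteness of the intersection form on the exceptional locus. The bookkeeping identities $\alpha=-F^{2}$, $E^{2}=\tilde E^{2}-G^{2}$, $F\cdot\tilde E=-F\cdot G$ are all derived correctly from $\pi^{*}D\cdot(\text{exc})=\pi^{*}E\cdot(\text{exc})=0$.

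A couple of minor points worth making explicit. First, in the $\alpha=0$ case you assert $\tilde D\cdot\tilde E=0$ without quite saying why: it follows from your earlier inequality $D\cdot E\le\tfrac12\bigl(D^{2}-\sqrt{D^{2}\tilde D^{2}}\bigr)$, which collapses to $D\cdot E\le 0$ when $D^{2}=\tilde D^{2}$, and then the nonnegativity forces equality. Second, the strict positivity $(\tilde D-2\tilde E)\cdot\pi^{*}D>0$ deserves a word: it holds because $\pi^{*}D$ is nef with $(\pi^{*}D)^{2}>0$ and $\tilde D-2\tilde E$ is big, so writing a multiple of the latter as ample plus effective gives the strict inequality via Hodge index applied to the ample part. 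With these clarifications your proof is complete, and it agrees with Sakai's original approach.
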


The corollary we need is the following.

\begin{corollary}\label{push}
Let $\pi:Y\rightarrow X$ be a birational morphism between two smooth surfaces and let $\tilde{D}$ be a big Cartier divisor on $Y$ such that $(\tilde{D}^2)>0$. Assume that
\begin{itemize}
\item $H^1(X,\oo_X(-\tilde{D}))\neq 0$;
\item $\tilde{D}$ is $m$-unstable for some $m>0$.
\end{itemize}
Set $D:=\pi_{*}\tilde D$ and $\alpha= D^{2}-\tilde{D}^{2}$. Then if $D$ is nef, there exists a nonzero effective divisor $E$ on $X$ such that
\begin{itemize}
\item $0\leq D\cdot E < m\alpha/2$, 
\item $mD\cdot E -m^2\alpha/4 \leq E^{2} \leq (D\cdot E)^{2} / D^{2}$.
\end{itemize} 

\end{corollary}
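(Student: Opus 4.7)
The plan is to reduce directly to the unnamed Proposition immediately preceding the Corollary, applied not to $\tilde{D}$ itself but to the rescaled divisor $m\tilde{D}$. Indeed, by hypothesis $\tilde{D}$ is $m$-unstable and $H^1(Y,\oo_Y(-\tilde{D}))\neq 0$, so the ``effective'' branch of the definition of $m$-unstability produces a nonzero effective divisor $\tilde{E}$ on $Y$ with $m\tilde{D}-2\tilde{E}$ big and $(m\tilde{D}-\tilde{E})\cdot\tilde{E}\le 0$. But these are exactly the two bulleted hypotheses of the Proposition for the pair $(m\tilde{D},\tilde{E})$: the divisor $m\tilde{D}$ is big (being a positive multiple of the big divisor $\tilde{D}$) and satisfies $(m\tilde{D})^{2}=m^{2}\tilde{D}^{2}>0$, while the two numerical conditions hold by $m$-unstability.

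Next, I would set $E:=\pi_{*}\tilde{E}$ and trace how the bookkeeping transforms under the rescaling $\tilde{D}\rightsquigarrow m\tilde{D}$. Push-forward commutes with multiplication by integers, so the role of ``$D$'' in the Proposition is now played by $\pi_{*}(m\tilde{D})=mD$, which is nef by our assumption that $D$ is nef; the role of $\alpha$ becomes $(mD)^{2}-(m\tilde{D})^{2}=m^{2}\alpha$. Substituting these into the two conclusions of the Proposition yields $0\le (mD)\cdot E<m^{2}\alpha/2$ and $(mD)\cdot E-m^{2}\alpha/4\le E^{2}\le((mD)\cdot E)^{2}/(mD)^{2}$. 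Dividing the first inequality by $m$ and noting that the $m^{2}$ factors cancel in the upper bound of the second inequality gives precisely the two displayed conclusions of the Corollary.

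The only nontrivial input is therefore verifying the remaining hypothesis of the Proposition, namely that $E=\pi_{*}\tilde{E}$ is a nonzero effective divisor on $X$. This is the main obstacle: a priori $\tilde{E}$ could be supported entirely on the exceptional locus of $\pi$, in which case its push-forward would vanish and the Proposition would not apply. To handle this I would argue that such a degeneration is incompatible with the numerical constraints, e.g. combining $(m\tilde{D}-\tilde{E})\cdot\tilde{E}\le 0$ with the negative-definiteness of the intersection form on exceptional curves and the bigness of $m\tilde{D}-2\tilde{E}$ (which forces a component of $\tilde{E}$ to meet the non-exceptional locus in a controlled way); alternatively one may simply replace $\tilde{E}$ by the union of its non-exceptional components without losing either bulleted property, since contracting an exceptional summand only decreases $(m\tilde{D}-\tilde{E})\cdot\tilde{E}$. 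Once $E\neq 0$ is secured, the Proposition applies verbatim and the rescaling argument above finishes the proof.
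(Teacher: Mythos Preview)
Your reduction is exactly the intended one: the paper states the Corollary as an immediate consequence of the preceding Proposition, and the only way to match the numerics is to feed $m\tilde D$ (rather than $\tilde D$) into that Proposition, after which your rescaling computation is correct on the nose.

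Your identification of the one genuine issue --- that $E=\pi_*\tilde E$ must be nonzero --- is also correct, but your two proposed fixes do not work as written. The second suggestion (drop the exceptional components of $\tilde E$) is not valid: the difference
\[
(m\tilde D-\tilde E')\cdot\tilde E'-(m\tilde D-\tilde E)\cdot\tilde E
= -cm(\tilde D\cdot F)+2c(\tilde E\cdot F)-c^2F^2
\]
for $\tilde E=\tilde E'+cF$ has no definite sign in general, so the inequality $(m\tilde D-\tilde E)\cdot\tilde E\le 0$ need not survive; nor is it clear that $m\tilde D-2\tilde E'$ remains big. The first suggestion (negative definiteness plus bigness of $m\tilde D-2\tilde E$) is in the right spirit but you have not actually extracted a contradiction from $m\tilde D\cdot\tilde E\le\tilde E^2<0$ alone.

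In fact the paper does \emph{not} prove $E\neq 0$ inside the Corollary. If you look at how the Corollary is invoked (e.g.\ in the proof of Proposition~\ref{quasi_ell_reider}), the non-vanishing of $E$ is checked there by hand, using the explicit shape $\tilde D=\pi^*D-2F$ with $F$ a $(-1)$-curve: if $\tilde E=kF$ then $(m\tilde D-\tilde E)\cdot\tilde E=2mk+k^2>0$, contradicting $m$-unstability. So the honest reading is that the Corollary should be understood with the implicit extra hypothesis ``and $\pi_*\tilde E\neq 0$'', verified in each application. Your rescaling argument is then a complete proof.
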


We can derive our effective base point freeness results. We will start with quasi-elliptic surfaces, applying Theorem \ref{q_ell} and the previous corollary.

\begin{proposition}\label{quasi_ell_reider}
Let $X$ be a quasi-elliptic surface with maximal Kodaira dimension. Let $D$ be a big and nef divisor on $X$. Then the following holds.
\begin{itemize}
\item $p=3:$ 
\begin{itemize}
\item if $D^{2}>4$ and $|K_{X}+D|$ has a base point at $x\in X$, there exists a curve $C$ such that $D\cdot C \leq 5$;
\item if $D^{2}>9$ and $|K_{X}+D|$ does not separate any two points $x,y\in X$, there exists a curve $C$ such that $D\cdot C \leq 13$;
\end{itemize}
\item $p=2:$ 
\begin{itemize}
\item if $D^{2}>4$ and $|K_{X}+D|$ has a base point at $x\in X$, there exists a curve $C$ such that $D\cdot C \leq 7$;
\item if $D^{2}>9$ and $|K_{X}+D|$ does not separate any two points $x,y\in X$, there exists a curve $C$ such that $D\cdot C \leq 17$;
\end{itemize}
\end{itemize}
\end{proposition}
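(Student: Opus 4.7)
The plan is to adapt Sakai's Reider-style argument on a blow-up, feeding it into Proposition \ref{q_ell} through Corollary \ref{push}. In each of the four cases I take a suitable birational $\pi\colon Y\to X$ together with an auxiliary divisor $\tilde{D}$ on $Y$:
\begin{itemize}
\item base-point at $x$: blow up $x$ with exceptional $E_{x}$, and set $\tilde{D}=\pi^{*}D-2E_{x}$;
\item non-separation of distinct points $x\ne y$: blow up both, and set $\tilde{D}=\pi^{*}D-2E_{x}-2E_{y}$;
\item non-separation of a tangent direction at $x$: blow up $x$, and set $\tilde{D}=\pi^{*}D-3E_{x}$.
\end{itemize}
A standard pushforward computation gives $\pi_{*}\oo_{Y}(K_{Y}+\tilde{D})\cong \mathcal{I}\otimes\oo_{X}(K_{X}+D)$, where $\mathcal{I}$ equals $\jj_{x}$, $\jj_{\{x,y\}}$ or $\mathfrak{m}_{x}^{2}$ respectively. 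Failure of base-point freeness or of separation forces $H^{1}$ of this ideal twist to be nonzero, hence $H^{1}(Y,\oo_{Y}(K_{Y}+\tilde{D}))\neq 0$, and Serre duality on $Y$ upgrades this to $H^{1}(Y,\oo_{Y}(-\tilde{D}))\neq 0$, matching the hypothesis of Corollary \ref{push}.

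Once $\tilde{D}$ is big -- so that, together with $\tilde{D}^{2}=D^{2}-\alpha>0$ for $\alpha$ equal to $4$, $8$, or $9$, the numerical hypothesis in the statement is exactly what is needed -- Proposition \ref{q_ell} gives that $\tilde{D}$ is $m$-unstable, with $m=3$ for $p=3$ and $m=4$ for $p=2$. Applying Corollary \ref{push} to $\pi$, noting $D=\pi_{*}\tilde{D}$ is nef by hypothesis, produces a nonzero effective divisor $E$ on $X$ with $0\le D\cdot E< m\alpha/2$. Since $D$ is nef, every irreducible component $C\subset E$ satisfies $D\cdot C\le D\cdot E$, and substituting the four pairs $(m,\alpha)\in\{(3,4),(4,4),(3,9),(4,9)\}$ produces the stated integer bounds $D\cdot C\le 5,\,7,\,13,\,17$; in the two separation statements the worst bound comes from the tangent-direction subcase ($\alpha=9$), the distinct-points subcase ($\alpha=8$) yielding the strictly better bounds $\le 11$ and $\le 15$.

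The main technical point is bigness of $\tilde{D}$, which Proposition \ref{q_ell} demands beyond the numerical $\tilde{D}^{2}>0$, and which in positive characteristic cannot be imported from Kodaira-type vanishing. I handle it directly: since $\pi_{*}\oo_{Y}(n\tilde{D})$ is the twist of $\oo_{X}(nD)$ by an appropriate power of the maximal ideal at each blown-up point, the imposed vanishing conditions cut out codimension $\tfrac{\alpha}{2}n^{2}+O(n)$ in $H^{0}(X,nD)$, so
\[
h^{0}(Y,n\tilde{D}) \;\ge\; h^{0}(X,nD) \;-\; \tfrac{\alpha}{2}n^{2} + O(n).
\]
Combined with $h^{0}(X,nD)\ge \tfrac{n^{2}}{2}D^{2}+O(n)$ from bigness of $D$, this yields $h^{0}(Y,n\tilde{D})\ge \tfrac{n^{2}}{2}(D^{2}-\alpha)+O(n)$, positive precisely under the stated hypothesis. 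With bigness secured, the remainder of the argument is the routine numerical bookkeeping indicated above.
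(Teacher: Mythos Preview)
Your proof is correct and follows essentially the same approach as the paper: blow up at the obstruction point(s), observe $H^{1}(Y,\oo_{Y}(-\tilde{D}))\neq 0$ (via Serre duality), invoke Proposition \ref{q_ell} on $Y$ to obtain $m$-instability with $m=p$ or $m=p^{2}$, and then apply Corollary \ref{push} with $\alpha=4$ or $9$ to read off the integer bounds. Your explicit verification of bigness of $\tilde{D}$ via section counting is a welcome addition---the paper tacitly relies on the standard Riemann--Roch criterion (a divisor $L$ with $L^{2}>0$ and $L\cdot(\text{big})>0$ is big)---and your separate treatment of the distinct-points and tangent-direction subcases makes the origin of $\alpha=9$ transparent, whereas the paper just remarks ``we allow $x=y$''.
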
 

\proof
We start with the case $p=3$. Assume that $|K_{X}+D|$ has a base point at $x\in X$. Let $\pi: Y\rightarrow X$ be the blow up at $x$. Since $x$ is a base point we have that 
$H^{1}(Y,\oo_{Y}(K_{Y}+\pi^{*}D-2F))\neq 0$, where $F$ is the exceptional divisor of $\pi$. Let $\tilde{D}:= \pi^{*}D-2F$. By assumption we have that $\tilde{D}^{2}>0$. By Theorem \ref{q_ell} we can find an effective divisor $\tilde{E}$ such that $p\tilde{D}-2\tilde{E}$ is big and $(p\tilde{D}-\tilde{E})\cdot \tilde{E}\leq 0$. The previous inequality easily implies that $\tilde{E}$ is not a positive multiple of the exceptional divisor and in particular $E:=\pi_{*}\tilde{E}$ is a non-zero effective divisor. Moreover, $D=\pi_{*}\tilde{D}$ is nef by assumption, thus we can apply Corollary \ref{push}. Since $\alpha=(D^{2}-\tilde{D}^{2})=4$, the first inequality of the corollary implies that $D\cdot E\leq 5$. 
\\The statement on separation of points follows exactly in the same way. Note that we allow the case $x=y$.

The bounds for the case $p=2$ can be obtained the same way, remarking that $\tilde{D}$ is $p^2$-unstable in this case.
\endproof

The previous results can be used to derive effective very ampleness statements for quasi elliptic surfaces when $D$ is an ample divisor.

\begin{thm}\label{main_va_qell}
Let $D$ be an ample Cartier divisor on a smooth quasi-elliptic surface $X$ with $\kappa(X)=1$. Then 
\begin{itemize}
\item if $p=3$, the divisor $K_X+kD$ is base-point free for any $k\geq 4$ and it is very ample for any $k\geq 8$;
\item if $p=2$, the divisor $K_X+kD$ is base-point free for any $k\geq 5$ and it is very ample for any $k\geq 19$.
\end{itemize}

In particular, if $N$ is any nef divisor, $K_{X}+k D+N$ is always very ample for any $k\geq 8$ (resp. $k\geq 19$) in characteristic $3$ (resp. $2$).
\end{thm}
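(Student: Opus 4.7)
The plan is to apply Proposition \ref{quasi_ell_reider} with $L := kD$, making use not only of its stated numerical bound on $L\cdot C$ but also of the finer estimate $E^{2}\leq (L\cdot E)^{2}/L^{2}$ from Corollary \ref{push}, which appears in the proof of the proposition though not in its statement. Since $D$ is ample Cartier, $D^{2}\geq 1$ and $D\cdot C\geq 1$ for every integral curve $C$, so $L^{2}\geq k^{2}$ and $L\cdot C\geq k$; in particular the thresholds $L^{2}>4$ (BPF) and $L^{2}>9$ (VA) hold throughout the claimed ranges.

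For base-point freeness I will suppose $|K_{X}+L|$ has a base point at $x$, blow up $x$ to get $\pi\colon Y\to X$ with $\tilde L=\pi^{*}L-2F$ (so that $\alpha=4$), and invoke both the $m$-instability of $\tilde L$ from Proposition \ref{q_ell} ($m=3$ if $p=3$, $m=4$ if $p=2$) and Corollary \ref{push}. The push-forward $E:=\pi_{*}\tilde E$ then satisfies
\[
0\leq L\cdot E<m\alpha/2,\qquad mL\cdot E-m^{2}\alpha/4\leq E^{2}\leq (L\cdot E)^{2}/L^{2}.
\]
The inequality $L\cdot E=k(D\cdot E)\geq k$ forces $D\cdot E=1$ (whence $L\cdot E=k$) in the stated ranges, and the remaining estimates collapse to $mk-m^{2}\leq E^{2}\leq 1/D^{2}\leq 1$, contradicting $k\geq 4$ (char $3$) or $k\geq 5$ (char $2$).

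Very ampleness is handled by the same strategy, blowing up two (possibly infinitely near) points; the worst case is $x=y$, where $\tilde L=\pi^{*}L-3F$ gives $\alpha=9$. The Reider-type bound in Proposition \ref{quasi_ell_reider} now reads $L\cdot E\leq 13$ (char $3$) or $L\cdot E\leq 17$ (char $2$), and Corollary \ref{push} supplies the companion inequalities with $(m,\alpha)=(3,9)$ or $(4,9)$. A short case analysis on the integer $L\cdot E=k(D\cdot E)$, using once more $E^{2}\leq 1/D^{2}\leq 1$ whenever $D\cdot E=1$ (and noting that $D\cdot E\geq 2$ is excluded by $L\cdot E<m\alpha/2$ in the claimed ranges), rules out every possibility once $k\geq 8$ (char $3$) or $k\geq 19$ (char $2$).

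Finally, the "in particular" assertion follows by rerunning the very ampleness argument with $L:=kD+N$ in place of $kD$: ampleness of $D$ and nefness of $N$ still give $L^{2}\geq k^{2}D^{2}\geq k^{2}$ and $L\cdot C\geq k(D\cdot C)+N\cdot C\geq k$ for every curve $C$, so the same numerical input feeds into the same Reider-type analysis. The main difficulty is the joint bookkeeping of the two inequalities from Corollary \ref{push}: one must observe that ampleness of $D$ pins $E^{2}$ above by $1/D^{2}\leq 1$ as soon as $D\cdot E=1$, while the instability lower bound $mL\cdot E-m^{2}\alpha/4$ is forced to exceed $1$ in exactly the claimed ranges of $k$; the slack in characteristic $2$, where $m=p^{2}=4$ enters through Proposition \ref{q_ell}, is what accounts for the larger bound $k\geq 19$.
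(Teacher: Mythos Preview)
Your proposal is correct and follows essentially the same approach as the paper: both arguments feed $L=kD$ (or $L=kD+N$) into the Reider-type machinery of Proposition \ref{quasi_ell_reider} and then contradict the pair of inequalities $mL\cdot E-m^{2}\alpha/4\le E^{2}\le (L\cdot E)^{2}/L^{2}$ from Corollary \ref{push} once $k$ exceeds the stated threshold. The paper's proof is terser (it cites Proposition \ref{quasi_ell_reider} for the existence of $E$ and only displays the char $3$ base-point-free computation, dismissing the rest as ``similar computations''), whereas you unwind the blow-up and the values of $(m,\alpha)$ explicitly; but the numerical content is identical.
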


\proof
The proof consists in explicitly computing the minimal multiple of $D$ which contradicts the second inequality of Corollary \ref{push}.

Let us start with base-point-freeness for $p=3$. Assume that $k\ge 5$, $K_X+kD$ has a base point and define $D':=kD$. Then, by Proposition \ref{quasi_ell_reider}, we know that there exists an effective divisor $E$ such that $(D'\cdot E)\le 5$. This implies
$$(D\cdot E)\le 1.$$
Now use the second inequality of Corollary \ref{push} on $D'$ to deduce
$$ 15 - 9\le 3(D'\cdot E)-9 \le \frac{(D'\cdot E)^2}{(D'^2)}\le 1.$$
This is a contradiction.

Similar computations give the other bounds.
\endproof

We now deal with surfaces of general type. The analogous of Proposition \ref{quasi_ell_reider} is the following.

\begin{proposition}\label{gen_type_reider}
Let $X$ be a surface of general type and let $D$ be a big and nef divisor on $X$. Then the following holds.
\begin{itemize}
\item $p\ge3:$ 
\begin{itemize}
\item if $D^{2}>\vol(X)+4$ and $|K_{X}+D|$ has a base point at $x\in X$, there exists a curve $C$ such that $D\cdot C \leq 1$;
\item if $D^{2}>\vol(X)+9$ and $|K_{X}+D|$ does not separate any two points $x,y\in X$, there exists a curve $C$ such that $D\cdot C \leq 2$;
\end{itemize}
\item $p=2:$ 
\begin{itemize}
\item if $D^{2}>\vol(X)+6$ and $|K_{X}+D|$ has a base point at $x\in X$, there exists a curve $C$ such that $D\cdot C \leq 7$;
\item if $D^{2}>\vol(X)+11$ and $|K_{X}+D|$ does not separate any two points $x,y\in X$, there exists a curve $C$ such that $D\cdot C \leq 17$;
\end{itemize}
\end{itemize}
\end{proposition}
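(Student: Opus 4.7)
The plan is to transplant the argument of Proposition \ref{quasi_ell_reider} to surfaces of general type, substituting Theorem \ref{eff_reid} (and, in characteristic $2$ with $\chi(\oo_X)<0$, Proposition \ref{char_2}) for Proposition \ref{q_ell} as the source of $m$-unstability on the blow-up. In all four sub-cases the template is identical: blow up the locus witnessing failure of base-point-freeness or of separation, produce an effective divisor on the blow-up via instability, push it forward using Corollary \ref{push}, and read off the numerical bound on $D\cdot C$ for $C$ an irreducible component of the resulting divisor.

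Concretely, starting from a base point $x$ of $|K_X+D|$, or from a length-$2$ subscheme witnessing failure of separation (allowing $x=y$, i.e.\ failure of tangent separation), I would form the blow-up $\pi\colon Y\to X$ of its support. Setting $\tilde D := \pi^*D - 2F$ in the base-point case, $\tilde D := \pi^*D - 2F_1 - 2F_2$ for two distinct points, and $\tilde D := \pi^*D - 3F$ in the tangent case, the cohomological failure translates into $H^1(Y,\oo_Y(K_Y+\tilde D))\ne 0$, with $\tilde D^2 = D^2 - \alpha$ and $\alpha\in\{4,8,9\}$, while $\vol(Y)=\vol(X)$. The worst case in the separation analysis is $\alpha=9$.

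Now I apply the instability dichotomy. For $p\ge 3$, Theorem \ref{eff_reid} makes $\tilde D$ $1$-unstable once $\tilde D^2 > \vol(X)$, matching the numerical hypothesis $D^2 > \vol(X)+\alpha$. For $p=2$, if $\chi(\oo_X)\ge 0$ one again uses Theorem \ref{eff_reid} but now under the threshold $\tilde D^2 > \vol(X)-3\chi(\oo_X)+2$, absorbed into the $+2$ shift in the hypothesis; if $\chi(\oo_X)<0$, Proposition \ref{char_2} gives $4$-unstability under the weaker threshold $\tilde D^2 > \vol(X)$. Taking the stronger hypothesis and the larger $m=4$ uniformly in characteristic $2$ accounts for the constants $+6$ and $+11$ in the statement. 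In every case one produces an effective $\tilde E$ with $m\tilde D - 2\tilde E$ big and $(m\tilde D-\tilde E)\cdot \tilde E \le 0$; exactly as in Proposition \ref{quasi_ell_reider}, bigness forbids $\tilde E$ from being a nonnegative combination of exceptional curves, so $E := \pi_*\tilde E$ is a nonzero effective divisor on $X$, and $D = \pi_*\tilde D$ is nef by assumption.

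Corollary \ref{push} then yields
\[
0 \le D\cdot E < \frac{m\alpha}{2}, \qquad m\, D\cdot E - \frac{m^2\alpha}{4} \le E^2 \le \frac{(D\cdot E)^2}{D^2},
\]
and taking $C$ to be any irreducible component of $E$ gives $D\cdot C \le D\cdot E$. The first inequality immediately settles the $p=2$ cases ($D\cdot E \le 7$ for $\alpha=4$, $D\cdot E \le 17$ for $\alpha=9$) and the $p\ge 3$ base-point case ($D\cdot E\le 1$). The main obstacle is the $p\ge 3$ separation case: the first inequality alone only yields $D\cdot E\le 4$, and one has to combine it with the Hodge-type second inequality and the bound $D^2 > \vol(X)+9$ to eliminate the intermediate values and conclude $D\cdot C\le 2$. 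Carrying out this final sharpening cleanly—in particular handling small values of $\vol(X)$, where the Hodge inequality is tightest—is where the arithmetic has to be done carefully.
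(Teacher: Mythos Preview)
Your approach is correct and matches the paper's proof, which likewise transports the argument of Proposition~\ref{quasi_ell_reider} to the general-type setting by invoking Theorem~\ref{eff_reid} for $p\ge 3$ and the combination of Theorem~\ref{eff_reid} and Proposition~\ref{char_2} for $p=2$. The sharpening you flag in the $p\ge 3$ separation case is in fact immediate: since $\vol(X)\ge 1$ forces $D^2>9$, the second inequality of Corollary~\ref{push} rules out $D\cdot E\in\{3,4\}$ exactly as in the classical Reider argument, with no delicate dependence on small values of $\vol(X)$.
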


\proof
The proof is basically the same as Proposition \ref{quasi_ell_reider}.
Let $p\ge 3$ and assume that $|K_{X}+D|$ has a base point at $x\in X$. Using the same notation as Proposition \ref{quasi_ell_reider}, we can blow up $x$ and deduce the existence of an effective divisor $\tilde{E}$ such that $\tilde{D}-2\tilde{E}$ is big and $(\tilde{D}-\tilde{E})\cdot \tilde{E}\leq 0$ (in order to deduce $1$-instability we use Theorem \ref{eff_reid}). Also here, the first inequality of Corollary \ref{push} implies that $(D\cdot E)\leq 1$. 
\\The statement on separation of points follow the same way.

The bounds for the case $p=2$ we use the same strategy, using a combination of Theorem \ref{sb_char_2} and Proposition \ref{char_2}.
\endproof

The following effective very ampleness statement can be deduced. Applying directly Proposition \ref{gen_type_reider} would provide bounds that depend on the volume. It is possible to get a uniform bound if we work with linear systems of the type $|2K_{X}+mD|$. Note that we get sharp statements for those linear systems. 

\begin{thm}\label{mainc}
Let $D$ be an ample Cartier divisor on a smooth surface $X$ of general type. Then 
\begin{itemize}
\item if $p\ge 3$, the divisor $2K_X+kD$ is base-point free for any $k\geq 3$ and it is very ample for any $k\geq 4$.
\item if $p=2$ the divisor $2K_X+kD$ is base-point free for any $k\geq 5$ and it is very ample for any $k\geq 19$.
\end{itemize}

In particular, if $N$ is any nef divisor, $2K_{X}+k D+N$ is always very ample for any $k\geq 4$ (resp. $k\geq 19$) in characteristic $p\ge3$ (resp. $p=2$).
\end{thm}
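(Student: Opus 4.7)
The plan is to apply Proposition \ref{gen_type_reider} with $D_1:=K_X+kD$, so that $|K_X+D_1|=|2K_X+kD|$. The volume-dependence of the thresholds in that proposition will be absorbed using log-concavity (Theorem \ref{logconc}): since both $K_X$ (general type) and $D$ (ample) are big, one has $\vol(K_X+kD)\geq \bigl(\sqrt{\vol(X)}+k\sqrt{D^2}\bigr)^2\geq \vol(X)+k^2 D^2\geq \vol(X)+k^2$. For each of the stated lower bounds on $k$, this estimate exceeds the relevant threshold $\vol(X)+4$, $\vol(X)+6$, $\vol(X)+9$ or $\vol(X)+11$ appearing in Proposition \ref{gen_type_reider}.

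First I would check that $D_1$ is big and nef for every $k\geq 1$. Bigness is immediate; for nefness I use the well-known fact that on a general-type surface every irreducible curve $C$ with $K_X\cdot C<0$ is a $(-1)$-curve with $K_X\cdot C=-1$, so $D_1\cdot C\geq k-1\geq 0$. The same computation yields the crucial pointwise lower bound $D_1\cdot C\geq k-1$ for every irreducible curve $C$, with equality only at a $(-1)$-curve meeting $D$ transversely in one point.

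The $p\geq 3$ cases now close immediately: $k\geq 3$ gives $D_1\cdot C\geq 2$, contradicting the base-point-free threshold $D_1\cdot C\leq 1$; $k\geq 4$ gives $D_1\cdot C\geq 3$, contradicting the very ampleness threshold $D_1\cdot C\leq 2$. The case $p=2$, $k\geq 19$ is handled the same way: the pointwise bound $D_1\cdot C\geq 18$ extends by additivity to $D_1\cdot E\geq 18$ for every non-zero effective $E$, contradicting the threshold $D_1\cdot E\leq 17$ of Proposition \ref{gen_type_reider}.

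The main obstacle will be $p=2$ base-point-freeness with $k\geq 5$, because the bound $D_1\cdot C\geq 4$ is not enough to beat the threshold $D_1\cdot E\leq 7$ on its own. Here I would invoke Corollary \ref{push} with $m=4$ and $\alpha=4$, giving simultaneously $E^2\geq 4(D_1\cdot E)-16$ and $E^2\leq (D_1\cdot E)^2/D_1^2$. Writing $E=\sum a_iC_i$, every component satisfies $D_1\cdot C_i\geq 4$, so $D_1\cdot E\leq 7$ forces $\sum a_i=1$ and $E$ is irreducible. A short case analysis on $D_1\cdot E\in\{4,5,6,7\}$, combined with adjunction to compute $E^2$ and the estimate $D_1^2\geq \vol(X)+25$ to control $(D_1\cdot E)^2/D_1^2<2$, rules out every subcase: $D_1\cdot E=4$ forces $E$ to be a $(-1)$-curve with $E^2=-1$, contradicting the lower bound $E^2\geq 0$; in every other subcase the lower bound on $E^2$ dwarfs the upper bound. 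The ``in particular'' statement with a nef twist $N$ follows by repeating the argument with $D_1+N$ in place of $D_1$, since this preserves nefness and only improves every numerical inequality involved.
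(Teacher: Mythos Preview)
Your proof is correct and follows essentially the same route as the paper: set $L=K_X+kD$, use that $K_X$-negative curves on a general-type surface are $(-1)$-curves to get $L\cdot C\geq k-1$, use log-concavity of the volume (Theorem \ref{logconc}) to guarantee $L^2>\vol(X)+\text{(threshold)}$, and then apply Proposition \ref{gen_type_reider}. In fact you are more explicit than the paper in the one delicate case, $p=2$ base-point-freeness with $k\geq 5$: there the first inequality of Corollary \ref{push} alone gives only $L\cdot E\leq 7$, which is not beaten by $L\cdot C\geq 4$, and your case-by-case use of the \emph{second} inequality of Corollary \ref{push} (with $m=4$, $\alpha=4$) is precisely what is needed---the paper subsumes this under ``the same strategy gives the result for $p=2$''.
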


\proof
Since negative extremal rays of general type surfaces have length $1$, if $m\geq 3$, we know that $L:=K_{X}+mD$ is an ample divisor and $L\cdot C\geq 2$ for any irreducible curve $C\subset X$. Moreover, 
by log-concavity of the volume function (see Theorem \ref{logconc}) we have that
\begin{equation}\notag
L^{2}=\vol(L)\geq \vol(K_{X})+9 D^{2}> \vol(X)+4.
\end{equation} 

Proposition \ref{gen_type_reider} implies that $K_{X}+L=2K_{X}+kD$ is base point free for any $k\geq 4$. A similar computation allows us to derive very ampleness. 

The same strategy gives the result for $p=2$.
\endproof

\proof[Proof of Theorem \ref{main2}]
This is simply given by Theorem \ref{main_va_qell} and Theorem \ref{mainc}
\endproof

\begin{rmk}
In \cite{terakawa}, similar results can be found. Nonetheless our approach allows to deduce effective base point free and very ampleness also on quasi-elliptic surfaces and arbitrary surfaces of general type.
\end{rmk}

\section{A Kawamata-Viehweg-type Vanishing Theorem in positive characteristic}

In this section we give an extension of the results in \cite{terakawa}. In that paper, the author uses the results in \cite{shepherdbarron} to deduce a Kawamata-Viehweg-type theorem for non-pathological surfaces. Using our methods we are able to discuss pathological surfaces and obtain an effective Kawamata-Viehweg type theorem in positive characteristic.

Let us first recall the classical Kawamata-Viehweg vanishing theorem in its general version (see \cite{kollarmori} for the general notation).

\begin{thm}\label{kv}
Let $(X,B)$ be a klt pair over an algebraically closed field of characteristic zero and let $D$ be a Cartier divisor on $X$ such that $D-(K_X + B)$ is big and nef. Then 
$$H^i(X, \oo_X(D)) = 0$$
for any $i > 0$.
\end{thm}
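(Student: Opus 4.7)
The plan is to reduce Theorem \ref{kv} to the classical Kodaira vanishing theorem via two standard steps: a log resolution of the pair $(X,B)$ and Kawamata's cyclic covering construction, which together handle the fractional boundary and the big-and-nef (rather than ample) hypothesis.

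First I would pass to a log resolution $\pi\colon Y\to X$ of $(X,B)$ and write
\[\pi^{*}(K_{X}+B)=K_{Y}+B_{Y}-F,\]
where $B_{Y}$ is an effective $\qq$-divisor with simple normal crossings support and coefficients strictly less than $1$ (this is exactly the klt hypothesis translated to $Y$), and $F$ is an effective $\pi$-exceptional $\qq$-divisor. Set $N:=\pi^{*}D+\lceil F\rceil$, which is an integral divisor, and $\Delta:=B_{Y}+\lceil F\rceil-F$, which is again snc with coefficients in $[0,1)$. Then
\[N-(K_{Y}+\Delta)=\pi^{*}\bigl(D-(K_{X}+B)\bigr)\]
is big and nef on $Y$. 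Because $\lceil F\rceil$ is effective and $\pi$-exceptional, once I prove $H^{i}(Y,\oo_{Y}(N))=0$ for $i>0$, the Leray spectral sequence together with $R^{j}\pi_{*}\oo_{Y}(N)=0$ for $j>0$ and $\pi_{*}\oo_{Y}(N)=\oo_{X}(D)$ (a relative version of the same vanishing) will yield $H^{i}(X,\oo_{X}(D))=0$.

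So the core matter is the log-smooth case: on smooth $Y$, with $\Delta$ snc and coefficients in $[0,1)$ and $N-(K_{Y}+\Delta)$ big and nef, show $H^{i}(Y,\oo_{Y}(N))=0$. Here I would invoke Kawamata's covering construction to produce a finite Galois cover $\rho\colon W\to Y$ branched along the components of $\Delta$ so that $\rho^{*}\Delta$ is integral; on $W$ the pullback of $N-(K_{Y}+\Delta)$ remains big and nef up to an effective contribution from the ramification divisor. I would then apply Kodaira vanishing for big and nef integral divisors on a smooth projective variety (which itself reduces to classical Kodaira via the Esnault--Viehweg perturbation trick, writing a big and nef $\qq$-divisor as ample plus a small effective term) to obtain vanishing on $W$, and descend to $Y$ by noting that $\oo_{Y}(N)$ appears as a direct summand of $\rho_{*}\oo_{W}(\rho^{*}N-\lfloor R\rfloor)$ for the appropriate ramification divisor $R$, via the Galois action on cohomology.

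The main obstacle is the careful bookkeeping of fractional parts: one must verify that after rounding, $\Delta$ still has snc support and coefficients in $[0,1)$, that the Kawamata cover can be arranged so the eigenspace decomposition of $\rho_{*}\oo_{W}(\cdot)$ isolates precisely $\oo_{Y}(N)$, and that the ramification contribution on $W$ preserves the big and nef property of the relevant divisor. All of these are standard in the literature (cf. \cite{kollarmori}) but require meticulous accounting; no genuinely new ideas are needed beyond Kodaira vanishing once the setup is in place.
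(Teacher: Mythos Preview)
The paper does not actually prove Theorem~\ref{kv}; it is merely recalled as the classical Kawamata--Viehweg vanishing theorem with a pointer to \cite{kollarmori} for background. So there is no ``paper's own proof'' to compare against. Your sketch is the standard argument one finds in that reference: log resolution to reduce to the log-smooth case, then a Kawamata cyclic cover to eliminate the fractional boundary, descending via the eigenspace decomposition, with Kodaira vanishing for big and nef divisors (obtained from ample Kodaira by the perturbation trick) at the bottom. One small caution: in your Leray step you invoke $R^{j}\pi_{*}\oo_{Y}(N)=0$ for $j>0$ as if it were already known, but this is itself a relative vanishing of Kawamata--Viehweg type; the usual way around the apparent circularity is either to prove the relative statement first over an affine base by the same cover argument, or to argue directly on $Y$ and push the result down at the end. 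Apart from that bookkeeping point your outline is correct and entirely in line with the literature the paper cites.
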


In positive characteristic, even for non-pathological smooth surfaces, there are counterexamples to Theorem \ref{kv}: Xie provided examples in \cite{xie1} of relatively minimal irregular ruled surfaces in every characteristic where Theorem \ref{kv} fails.

Nonetheless, assuming $B=0$, we have the following result (cf. \cite{mukai}).

\begin{thm}\label{char_pat}
Let $X$ be a smooth surface in positive characteristic. Assume that there exists a big and nef Cartier divisor $D$ on $X$ such that
$$H^1(X,\oo_X(K_X+D))\neq 0.$$
Then:
\begin{itemize}
\item $X$ is either quasi-elliptic of Kodaira dimension one or of general type;
\item up to a sequence of blow-ups, $X$ has the structure of a fibered surface over a smooth curve such that every fibre is connected and singular.
\end{itemize}
\end{thm}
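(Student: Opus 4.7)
The strategy is the Tango--Ekedahl--Shepherd-Barron machinery applied to the extension bundle detecting $H^1$-non-vanishing. By Serre duality the hypothesis is equivalent to $H^1(X,\oo_X(-D))\neq 0$, and a non-trivial class produces a non-split extension
\[
0\to \oo_X\to \mathcal{E} \to \oo_X(D)\to 0.
\]
Since $D$ is big and nef on a surface, $D^2>0$, hence $c_1^2(\mathcal{E})-4c_2(\mathcal{E})=D^2>0$: Bogomolov's inequality fails for $\mathcal{E}$. Theorem \ref{pe_unstable} then produces a $p^e$-purely inseparable cover $\rho\colon Y\to X$ with $(F^*)^e\mathcal{E}$ unstable, and by Construction \ref{constr}
\[
K_Y \equiv \rho^*\Big(K_X - \frac{p^e-1}{p^e}D'\Big),
\]
where $D'=D_1-D_2$ is big.

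For the first bullet I would split on the (semi-)stability of $\mathcal{E}$. If $\mathcal{E}$ is semi-stable, the theorem of Shepherd-Barron stated just before Corollary \ref{unir} gives the dichotomy directly from the failure of Bogomolov: $X$ is of general type or else quasi-elliptic with $\kappa(X)=1$. If $\mathcal{E}$ is unstable, then the composition of the section $\oo_X\hookrightarrow \mathcal{E}$ with the destabilizing quotient is non-zero (otherwise, as in Remark \ref{rem_eff} with $e=0$, the extension would split), and one obtains a non-zero effective divisor $E$ with $D-2E$ big and $(D-2E)^2>0$. This configuration is impossible on non-pathological surfaces, because Sakai's Theorem \ref{sakai_1} is equivalent to Bogomolov's inequality for semi-stable rank-two bundles by \cite{dicerbo}, and Bogomolov holds on surfaces which are neither of general type nor quasi-elliptic with $\kappa(X)=1$ by \cite[Theorem 7]{shepherdbarron}.

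For the second bullet I would exploit the negative contribution of $D'$ to $K_Y$. In the quasi-elliptic case $X$ already fibres over a smooth curve with cuspidal fibres, giving the conclusion for free; in the general type case with $p=2$ and $\chi(\oo_X)<0$, Theorem \ref{sb_char_2} provides the fibration in singular rational curves directly. In the remaining general-type cases I would produce a one-parameter family of rational curves on $Y$ via bend-and-break (Theorem \ref{b-a-b}) applied to curves on which $K_Y$ is negative: this family sweeps out $Y$ and yields a fibration $h\colon Y\to B'$, which after suitable blow-ups descends along the purely inseparable $\rho$ to a fibration on a birational modification of $X$ whose general fibre is the image of a rational curve on $Y$. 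Stein factorization secures connectedness, and singularity of the fibres follows from the interplay between the bend-and-break output and the $p^e$-purely inseparable nature of $\rho$. The main obstacle is precisely this last step in the residual general-type cases: Shepherd-Barron's uniruledness criterion for $Y$ requires the quantitative bound $c_1^2(\mathcal{E})-4c_2(\mathcal{E})>\vol(X)/(p-1)^2$, which is not a priori available in our setting and must be recovered by iterating the Frobenius pullback construction and playing the extension data against the bend-and-break estimates.
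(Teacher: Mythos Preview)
The paper does not actually prove Theorem~\ref{char_pat}; it quotes the result from \cite{mukai}. So there is no ``paper's own proof'' to compare against, and your proposal has to be judged on its own merits.

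\medskip

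\textbf{First bullet.} Your semi-stable branch is fine: Shepherd-Barron's dichotomy (the theorem preceding Corollary~\ref{unir}) gives exactly what is needed. The unstable branch, however, is not an argument as written. Obtaining an effective $E$ with $D-2E$ big and $(D-E)\cdot E\le 0$ is the \emph{conclusion} of Sakai's theorem, not a contradiction; your sentence ``this configuration is impossible because Sakai $\Leftrightarrow$ Bogomolov and Bogomolov holds'' is circular. What you actually need is the elementary Hodge-index computation: from $D$ nef, $D^2>0$, $(D-E)\cdot E\le 0$ and $D-2E$ big one gets $E^2\ge D\cdot E\ge 0$ and $(D\cdot E)^2\ge D^2\cdot E^2\ge D^2\cdot(D\cdot E)$, whence either $D\cdot E\ge D^2$ (contradicting $(D-2E)\cdot D>0$) or $D\cdot E=E^2=0$ (contradicting $E\neq 0$ via Hodge index). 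This is precisely Terakawa's vanishing in case~(1), stated immediately after Theorem~\ref{char_pat}; once you invoke it, the first bullet follows by contraposition.

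\medskip

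\textbf{Second bullet.} Here there is a genuine gap, and you correctly locate it: the bend-and-break route through Shepherd-Barron's uniruledness criterion requires $D^2>\vol(X)/(p-1)^2$, which you do not have. ``Iterating the Frobenius pullback'' does not help, because the discriminant of $F^{e*}\mathcal{E}$ scales by $p^{2e}$ while the volume bound does not improve. The argument in \cite{mukai} (and implicitly in \cite{ekedahl}) bypasses this entirely. Since $D$ is big and nef, Szpiro's result (used in the paper just before Corollary~\ref{sz}) gives $H^1(X,\oo_X(-mD))=0$ for $m\gg 0$; hence there is a largest $k$ with $H^1(X,\oo_X(-kD))\neq 0$, and for this $k$ the Frobenius map $F^*\colon H^1(-kD)\to H^1(-pkD)$ is not injective. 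A nonzero class in the kernel is exactly a Tango structure: it produces (after blow-ups) a $p$-closed $1$-foliation on $X$ whose leaves give the fibration over a smooth curve, with connected fibres by Stein factorization and singular fibres because the foliation has isolated singular points on each fibre. This works uniformly for all $X$, with no case split on $\kappa(X)$, $p$, or $\chi(\oo_X)$, and it is the mechanism the paper itself exploits in the proof of Theorem~\ref{injectivity}.
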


Furthermore, Terakawa deduced the following vanishing result in \cite{terakawa}, using the techniques in \cite{shepherdbarron}.

\begin{thm}
Let $X$ be a smooth projective surface over an algebraically closed field of characteristic $p>0$ and let $D$ be a big and nef Cartier divisor on $X$.
Assume that either
\begin{enumerate}
\item $\kappa(X)\neq 2$ and $X$ is not quasi-elliptic with $\kappa(X)=1$; or
\item $X$ is of general type with
\begin{itemize}
\item $p\ge 3$ and $(D^2)>\vol(X);$ or
\item $p=2$ and $(D^2)>\max\{\vol(X), \vol(X)-3\chi(\oo_X)+2\}.$
\end{itemize}
\end{enumerate}
Then
$$H^i(X, \oo_X(K_X + D)) = 0$$
for all $i > 0$.
\end{thm}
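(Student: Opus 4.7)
The plan is to reduce the vanishing to the single case $i = 1$ and then handle the two alternative hypotheses through different structural inputs. For $i \geq 3$ the cohomology vanishes automatically on a surface. For $i = 2$, Serre duality gives $H^2(X, \oo_X(K_X+D)) \cong H^0(X, \oo_X(-D))^*$, and this vanishes because if $-D$ had a nonzero section with effective divisor $E' \sim -D$, then $H \cdot E' = -H \cdot D < 0$ for any ample $H$, contradicting the effectiveness of $E'$. So it suffices to prove $H^1(X, \oo_X(K_X+D)) = 0$, equivalently (again by Serre duality) $H^1(X, \oo_X(-D)) = 0$.

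Case (1) should be a clean contrapositive of Theorem \ref{char_pat}: a nonzero $H^1(X, \oo_X(K_X+D))$ with $D$ big and nef would force $X$ to be either quasi-elliptic of Kodaira dimension one or of general type, both of which are excluded by hypothesis.

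For case (2), suppose for contradiction that $H^1(X, \oo_X(-D)) \neq 0$. The numerical hypotheses on $(D^2)$ are precisely those under which Theorem \ref{eff_reid} (which in characteristic $2$ relies on Proposition \ref{char_2}) concludes that $D$ is $1$-unstable. Combined with $H^1(-D) \neq 0$, the definition of $1$-unstability produces a nonzero effective divisor $E$ with $D - 2E$ big and $(D - E) \cdot E \leq 0$. The plan is to show that this configuration is inconsistent with $D$ being nef. Nefness of $D$ and bigness of $D - 2E$ give $D \cdot (D - 2E) \geq 0$, so $D \cdot E \leq D^2/2$. Combined with $D \cdot E \geq 0$ (nef) and $D \cdot E \leq E^2$ (from $(D-E) \cdot E \leq 0$), we split into two subcases. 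If $D \cdot E = 0$, then Hodge index (using $D^2 > 0$) forces $E \equiv 0$ numerically, contradicting the positivity of intersection of a nonzero effective divisor with any ample class. If $D \cdot E > 0$, then $E^2 \geq D \cdot E > 0$ and Hodge index yields
\[
D^2 \;\leq\; \frac{(D \cdot E)^2}{E^2} \;\leq\; D \cdot E \;\leq\; \frac{D^2}{2},
\]
forcing $D^2 \leq 0$, contradicting the bigness of $D$.

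The only non-routine ingredient is Theorem \ref{eff_reid}, which is exactly the Reider-type substitute for the characteristic-zero Sakai dichotomy that the paper has already secured for general type surfaces in positive characteristic. I do not anticipate any further obstacle: once $1$-unstability is in hand, the nefness of $D$ together with two lines of Hodge index bookkeeping close case (2), and case (1) is immediate. The main piece of care is simply matching the numerical hypotheses of the theorem with those of Theorem \ref{eff_reid}, which coincide on the nose.
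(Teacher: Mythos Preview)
The paper does not supply its own proof of this theorem; it is quoted as a result of Terakawa \cite{terakawa} and stated without argument. Your proposed proof is correct and is the natural one: the reduction to $i=1$ is routine, case~(1) is the contrapositive of Theorem~\ref{char_pat}, and in case~(2) the hypotheses match those of Theorem~\ref{eff_reid} on the nose, after which your Hodge-index computation showing that a nef $D$ admits no destabilising $E$ is clean and standard.

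One small inaccuracy worth removing: your parenthetical that Theorem~\ref{eff_reid} ``in characteristic $2$ relies on Proposition~\ref{char_2}'' is not correct. Theorem~\ref{eff_reid} is Shepherd-Barron's standalone result with the hypothesis $(D^2)>\max\{\vol(X),\vol(X)-3\chi(\oo_X)+2\}$ when $p=2$, which is exactly what is assumed here; Proposition~\ref{char_2} covers a \emph{different} regime (namely $\chi(\oo_X)<0$ with the weaker hypothesis $(D^2)>\vol(X)$) and only yields $4$-instability. It plays no role in the present statement, so the aside should simply be deleted.
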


Our aim is to improve this theorem for arbitrary surfaces, via bend-and-break techniques.

More generally, we want to deduce some results on the injectivity of cohomological maps
$$H^1(X,\oo_X(-D)) \xrightarrow{F^*} H^1(X,\oo_X(-pD))$$
where $D$ is a big divisor on $X$.

The following result by  Koll\'ar is an application of bend-and-break lemmas (cf. Theorem \ref{b-a-b}), specialised in our two-dimensional setting.

\begin{thm}\label{kollar}
Let $X$ be a smooth projective variety over a field of positive characteristic and let $D$ be a Cartier divisor on $X$ such that:
\begin{enumerate}
\item $H^1(X,\oo_X(-mD)) \xrightarrow{F^*} H^1(X,\oo_X(-pmD))$ is not injective for some integer $m>0$;
\item there exists a curve $C$ on $X$ such that
$$(p-1)(D\cdot C) - (K_X\cdot C) >0.$$
\end{enumerate}
Then through every point $x$ of $C$ there is a rational curve $C_x$ such that 
\begin{equation}\label{alg_rel}
[C]\overset{e}{\approx} k_0 [C_x] +\sum_{i\neq 0} k_i[C_i]
\end{equation} 
(as algebraic cycles), with $k_i\ge 0$ for all $i$ and
$$(p-1)(D\cdot C_x)-(K_X\cdot C_x)\le \dim(X)+1.$$
\end{thm}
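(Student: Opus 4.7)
The plan is to combine the non-injectivity of Frobenius (hypothesis (1)) with the Tango--Ekedahl--Shepherd-Barron extension-bundle construction to manufacture a purely inseparable cover of $X$ on which a curve numerically related to $C$ has negative canonical degree, then apply Mori's bend-and-break on the cover and push the resulting rational curve back down.

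First, a nonzero class in $\ker(F^* \colon H^1(X,\oo_X(-mD)) \to H^1(X,\oo_X(-pmD)))$ corresponds to a non-split extension
$$0 \to \oo_X \to \mathcal E \to \oo_X(mD) \to 0$$
whose Frobenius pullback $F^*\mathcal E \cong \oo_X \oplus \oo_X(pmD)$ is split. The sub-line-bundle $\oo_X(pmD) \hookrightarrow F^*\mathcal E$ thus violates Bogomolov's inequality for $F^*\mathcal E$ (after restricting to a suitable very ample surface section if $\dim X > 2$, so that $(mD)^2 > 0$ on the slice). Theorem \ref{pe_unstable} and Construction \ref{constr}, together with Remark \ref{rem_eff}, then produce a purely inseparable degree-$p$ cover $\rho\colon Y \to X$ sitting inside $\pp(\mathcal E)$, with
$$K_Y \;\equiv\; \rho^*\!\left(K_X - \tfrac{p-1}{p}(mD - 2E)\right)$$
for some effective divisor $E \geq 0$ on $X$.

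Second, let $\tilde C \subset Y$ denote the reduced preimage of $C$, so that $\rho_*[\tilde C]$ is a positive multiple of $[C]$. The formula above, the projection formula and the effectivity of $E$ give
$$(K_Y \cdot \tilde C) \;\leq\; \frac{\deg(\rho|_{\tilde C})}{m}\Bigl((K_X \cdot C) - (p-1)(D \cdot C)\Bigr) \;<\; 0$$
by hypothesis (2). Since $Y$ is a Cartier divisor in the smooth $\pp^1$-bundle $\pp(\mathcal E)$, it has local complete intersection singularities and generically meets its own smooth locus along $\tilde C$. For any $x \in C$ fix a preimage $\tilde x \in \tilde C$; Theorem \ref{b-a-b} then yields a rational curve $\tilde C_{\tilde x} \subset Y$ through $\tilde x$ with $-(K_Y \cdot \tilde C_{\tilde x}) \leq \dim Y + 1 = \dim X + 1$, together with an effective algebraic equivalence $[\tilde C] \overset{e}{\approx} k_0[\tilde C_{\tilde x}] + \sum_{i\neq 0} k_i [\tilde C_i]$ in $Z_1(Y)$.

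Finally, set $C_x := \rho(\tilde C_{\tilde x})$, which is rational because the purely inseparable image of a rational curve is rational. Pushing the algebraic equivalence forward by $\rho_*$ and, if necessary, clearing a common factor produces the relation \eqref{alg_rel} on $X$ with nonnegative coefficients. Transporting the bound $-(K_Y \cdot \tilde C_{\tilde x}) \leq \dim X + 1$ back through the formula for $K_Y$ via the projection formula yields $(p-1)(D \cdot C_x) - (K_X \cdot C_x) \leq \dim X + 1$. The main obstacle I expect is the careful bookkeeping of degrees, multiplicities and $p$-th powers under the purely inseparable maps: one must verify that the pushed-forward coefficient of $C_x$ is strictly positive (so $C_x$ genuinely contributes to the equivalence on $X$) and that the effective correction $E$ from Remark \ref{rem_eff} enters the final numerical inequality in the favourable direction. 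The higher-dimensional reduction to a surface setting for the covering construction is a secondary but routine point.
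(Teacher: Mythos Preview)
Your overall strategy matches the paper's: manufacture a finite cover $\pi\colon Y\to X$ from the Frobenius-killed cohomology class so that $K_Y$ is $\pi^*K_X$ minus a positive multiple of $\pi^*D$, run bend-and-break on the preimage of $C$, and push the resulting rational curve back down to $X$. The paper, however, does not route this through Bogomolov instability and Construction~\ref{constr}; it invokes Koll\'ar's Construction~II.6.1.6 directly, which works in arbitrary dimension and produces the clean formula $K_Y=\pi^*(K_X+k(1-p)D)$ with no effective correction term.

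Your detour creates two genuine gaps. First, Theorem~\ref{pe_unstable} and Construction~\ref{constr} are stated for surfaces only, so invoking them for a smooth projective $X$ of arbitrary dimension is not legitimate; your proposed fix of slicing down to a very ample surface section is not what the paper does and would require substantial extra work to transport both the cover and the rational curves back to $X$. Second, you underuse your own observation that $F^*\mathcal E$ actually \emph{splits}: this forces the destabilising sub-line-bundle to be $\oo_X(pmD)$ on the nose, hence $E=0$ in the notation of Remark~\ref{rem_eff}, so the ``effective correction'' you flag as the main obstacle simply does not arise. Relatedly, your displayed $K_Y$ formula has dropped a factor of $p$: with $D'=pmD-2E$ one obtains $K_Y\equiv\rho^*\bigl(K_X-(p-1)mD+\tfrac{2(p-1)}{p}E\bigr)$, not $\rho^*\bigl(K_X-\tfrac{p-1}{p}(mD-2E)\bigr)$. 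Once you exploit the splitting (so $E=0$) and bypass the surface-only instability machinery, what remains is exactly Koll\'ar's argument as sketched in the paper.
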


\proof
This is essentially a slight modification of \cite[Theorem II.6.2]{kollar}. For the reader's convenience, we sketch it ab initio. Assumption (1) allows us to construct a finite morphism
$$\pi\colon Y \rightarrow X,$$
where $Y$ is defined as a Cartier divisor in the projectivisation of a non-split rank-two bundle over $X$ (cf. \cite[Construction II.6.1.6]{kollar}, which is a slight modification of Construction \ref{constr}). Furthermore, the following property holds:
$$K_Y=\pi^*(K_X+(k(1-p)D)),$$
where $k$ is the largest integer for which $H^1(X,-kD)\not=0$.

Now take the curve given in (2) and consider $C':=\red \pi^{-1}(C)$. The hypothesis on the intersection numbers and the formula for the canonical divisor of $Y$ guarantee that $(K_Y\cdot C')<0$. Let $y \in C'$ be a pre-image of $x$ in $Y$. So we can apply Theorem \ref{b-a-b} and deduce the existence of a rational curve $C'_y$ passing through $y$. Using the projection formula, we obtain a curve $C_x$ on $X$ for which:
$$(p-1)(D\cdot C_x)-(K_X\cdot C_x)\le \dim(X)+1.$$
\endproof

If we assume the dimension to be two and the divisor $D$ to be big and nef, the asymptotic condition
$$H^1(X,\oo_X(-mD))=0,$$
for $m$ sufficiently large is guaranteed by a result of Szpiro (cf. \cite{szpiro}).

This remark gives the following corollary.

\begin{corollary}\label{sz}
Let $X$ be a smooth projective surface over a field of positive characteristic and let $D$ be a big and nef Cartier divisor on $X$ such that $H^1(X,\oo_X(-D))\neq 0$. Assume there exists a curve $C$ on $X$ such that
$$(p-1)(D\cdot C) - (K_X\cdot C) >0.$$

Then through every point $x$ of $C$ there is a rational curve $C_x$ such that 
$$(p-1)(D\cdot C_x)-(K_X\cdot C_x)\le 3.$$
\end{corollary}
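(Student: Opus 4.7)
The plan is to reduce the statement to a direct application of Theorem \ref{kollar} with $\dim X = 2$, so that the bound $\dim(X)+1 = 3$ on the right-hand side of the conclusion of Theorem \ref{kollar} produces exactly the inequality claimed here. Hypothesis (2) of Theorem \ref{kollar} is precisely the intersection hypothesis on $C$ we are given, so the only thing to verify is hypothesis (1): namely, that there exists a positive integer $m$ for which the Frobenius pullback
$$F^{*}\colon H^1(X,\oo_X(-mD)) \longrightarrow H^1(X,\oo_X(-pmD))$$
fails to be injective.

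To produce such an $m$, I would use the asymptotic vanishing provided by Szpiro's theorem (cf. \cite{szpiro}): since $D$ is big and nef on the surface $X$, one has $H^1(X,\oo_X(-kD)) = 0$ for all sufficiently large $k$. Consider now the tower of iterated Frobenius pullbacks
$$H^1(X,\oo_X(-D)) \xrightarrow{F^{*}} H^1(X,\oo_X(-pD)) \xrightarrow{F^{*}} H^1(X,\oo_X(-p^{2}D)) \xrightarrow{F^{*}} \cdots$$
Pick a nonzero class in the leftmost group, which exists by hypothesis. Since $p^k\to\infty$, its image under the composition eventually lies in a vanishing cohomology group, so there is a smallest integer $j\ge 0$ at which the iterated pullback of this class becomes zero. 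Then the single arrow $F^{*}\colon H^1(X,\oo_X(-p^{j}D)) \to H^1(X,\oo_X(-p^{j+1}D))$ has nontrivial kernel, and setting $m := p^{j}$ verifies hypothesis (1) of Theorem \ref{kollar}.

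Theorem \ref{kollar} then applies and produces, through every point $x\in C$, a rational curve $C_x$ satisfying the cycle relation (\ref{alg_rel}) and the inequality $(p-1)(D\cdot C_x) - (K_X\cdot C_x) \le 3$, which is exactly the statement of the corollary. The argument is essentially mechanical once Theorem \ref{kollar} and Szpiro's asymptotic vanishing are in hand, so I do not anticipate any serious obstacle; the only bookkeeping point is that Szpiro is invoked along the subsequence $\{p^k D\}_{k\ge 0}$ rather than on every $mD$, which is harmless since $p^k\to\infty$.
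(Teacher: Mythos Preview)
Your proof is correct and is exactly the argument the paper intends: the paper states Szpiro's asymptotic vanishing $H^1(X,\oo_X(-mD))=0$ for $m\gg0$ immediately before the corollary and then simply says ``This remark gives the following corollary,'' which is precisely your reduction to Theorem \ref{kollar} via the Frobenius tower. Your write-up merely spells out the detail of extracting a specific $m=p^{j}$ at which $F^*$ has nontrivial kernel, but the approach is identical.
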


We will show later how Corollary \ref{sz} can be used to deduce an effective version of Kawamata-Viehweg-type vanishing for arbitrary smooth surfaces


In what follows, we will also need the following lemma on fibered surfaces, which explicitly gives a bound on the genus of the fibre with respect to the volume of the surface.

\begin{lemma}\label{volume_genus}
Let $f\colon X \rightarrow C$ be a fibered surface of general type and let $g$ be the arithmetic genus of the general fibre $F$. Then
$$\vol(X)\ge g-4.$$
\end{lemma}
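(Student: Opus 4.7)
The strategy is to pass to the minimal model and then combine a Noether-type inequality with a lower bound on the geometric genus coming from the fibration via the Leray spectral sequence.

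First I would replace $X$ by its minimal model. The volume is a birational invariant on varieties of general type, and the fibration $f$ descends because any exceptional $(-1)$-curve contracted in the minimal model process is necessarily contained in a fiber of $f$ (it cannot dominate $C$). Thus we may assume that $K_X$ is nef and big with $\vol(X)=K_X^2$. Adjunction on the general fiber, using $F^2=0$, gives $K_X\cdot F = 2g-2$, and Riemann--Roch on $F$ yields
\[ h^0(F, K_X|_F) \;\ge\; \chi(F, K_X|_F) \;=\; (2g-2)+(1-g) \;=\; g-1. \]
Via the Leray spectral sequence, $p_g(X)=h^0(C, f_*\omega_X)$, so $f_*\omega_X$ is a torsion-free sheaf on $C$ of generic rank at least $g-1$.

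For $g \le 5$ the inequality $\vol(X)\ge g-4$ is immediate since $K_X^2$ is a positive integer. For $g \ge 6$, the plan is to establish $p_g(X)\ge g-1$ and then invoke Noether's inequality $K_X^2\ge 2 p_g(X)-4$, which holds in arbitrary characteristic, to conclude $\vol(X)\ge 2(g-1)-4 = 2g-6 \ge g-4$.

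The main obstacle is obtaining $p_g(X)\ge g-1$ in positive characteristic, where Fujita-type semi-positivity of $f_*\omega_{X/C}$ can fail. To circumvent this, I would exploit the bigness of $K_X$ by iterating the restriction sequence
\[ 0 \to \oo_X(D-F) \to \oo_X(D) \to \oo_F(D|_F) \to 0 \]
with $D = K_X-kF$ over the range where $K_X-kF$ remains pseudo-effective (roughly $0 \le k \le K_X^2/(2g-2)$, the natural numerical threshold coming from the bigness criterion), accumulating the contributions of order $g-1$ from each restriction via Riemann--Roch on $X$ to extract the required lower bound on $p_g(X)$.
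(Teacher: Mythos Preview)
Your strategy has a genuine gap at its crux. You reduce everything to the inequality $p_g(X)\ge g-1$ for $g\ge 6$, but the argument you sketch---iterating the restriction sequences
\[
0\to\oo_X(K_X-(k{+}1)F)\to\oo_X(K_X-kF)\to\oo_F(K_F)\to 0
\]
and ``accumulating contributions''---produces \emph{upper} bounds on $h^0$, not lower bounds: each sequence gives $h^0(K_X-kF)\le h^0(K_X-(k{+}1)F)+g$. To bound $p_g$ from below you would need surjectivity of the restriction maps, i.e.\ control of $H^1$, and that is precisely the semi-positivity statement that fails in characteristic $p$. Concretely, over $\pp^1$ nothing in your outline excludes $f_*\omega_X\simeq\oo(-1)^{\oplus(g-1)}\oplus\oo(\chi(\oo_X)-2)$, which is compatible with $\chi(\oo_X)\ge 0$ yet gives $p_g$ far below $g-1$. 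The Riemann--Roch ``threshold'' you mention bounds how long $K_X-kF$ stays big, but this does not translate into sections of $K_X$ itself.

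A secondary issue: your claim that every $(-1)$-curve lies in a fibre, so that the fibration descends to the absolute minimal model, is only valid when $g(C)\ge 1$ (a rational curve cannot dominate a curve of positive genus). Over $\pp^1$ a $(-1)$-curve can be horizontal. The paper avoids this by passing to the \emph{relatively} minimal model, keeping the fibration and making $K_{X/C}$ nef.

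The paper's argument is entirely different and does not go through $p_g$ or Noether's inequality. It splits on $b:=g(C)$. For $b\ge 2$ the identity $K_X^2=K_{X/C}^2+8(g-1)(b-1)$ and nefness of $K_{X/C}$ give the bound immediately. For $b\le 1$ the key inputs are Shepherd-Barron's theorem that $\chi(\oo_X)\ge 0$ (an essential positive-characteristic ingredient you do not invoke), the Riemann--Roch formula $\deg f_*\omega_{X/C}=\chi(\oo_X)-(g-1)(b-1)$, and the structure theory of vector bundles on the base (Atiyah's classification for $b=1$, Grothendieck's splitting for $b=0$). These are used to exhibit an explicit effective divisor linearly equivalent to $2K_X-F$ (when $b=1$) or $K_X+3F$ (when $b=0$), which is then intersected with the nef divisor $K_{X/C}$ to extract the numerical bound.
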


\proof
We divide our analysis according to the genus $b$ of the base, after having assumed the fibration is relatively minimal (i.e. that $K_{X/C}$ is nef).
\begin{description}
\item [$b\ge 2$] In this case we can deduce a better estimate. Indeed, 
$$\vol(X)\ge (K_X^2) = (K_{X/C}^2)+8(g-1)(b-1) \ge 8g-8.$$
\item [$b=1$] In this case we need a more careful analysis, since in positive characteristic we cannot assume the semipositivity of $f_*K_{X/C}$. Nonetheless the following general formula holds:
\begin{equation}\label{rr}
\deg(f_*K_{X/C}) = \chi(\oo_X) - (g-1)(b-1),
\end{equation}
which specialises to
$$\deg(f_*K_{X/C}) = \chi(\oo_X)\ge 0.$$
Formula (\ref{rr}) can be obtained via Riemann-Roch, since we know that $R^1f_*K_{X/C}=\oo_C$ and that $R^1f_*nK_{X/C}=0$ for $n\ge 2$ by relative minimality.
The last inequality can be assumed by \cite[Theorem 8]{shepherdbarron1}. Furthermore, one can apply the following formula
$$\deg(f_*(nK_{X/C})) = \deg(f_*K_{X/C}) + \frac{n(n-1)}{2}(K_{X/C}^2).$$
Since $K_{X/C}$ is big, we deduce that
$$\deg(f_*(2K_{X/C}))\ge 1.$$
As a consequence, we can apply the results of \cite{atiyah} and deduce a decomposition of $f_*(2K_{X/C})$ into indecomposable vector bundles
$$f_*(2K_{X/C})= \bigoplus_i E_i,$$
where we can assume that $\deg(E_1)\ge 1$. This implies that all quotient bundles of $E_1$ have positive degree. We want to show now that there exists a degree-one divisor $L_1$ on $C$ such that $h^0(C,f_*(2K_{X/C})\otimes \oo_C(-L_1))\neq0$. 
\\But this is clear, since, for all degree-one divisor $L$ on $C$, one has that all quotient bundles of $f_*(2K_{X/C})\otimes \oo_C(-L)$ have degree zero and, up to a twisting by a degree-zero divisor on $C$, one can assume there exists a quotient
$$f_*(2K_{X/C}\otimes \oo_C(-L_1)) \rightarrow \oo_C \rightarrow 0.$$
This implies that $h^0(X,\oo_X(2K_{X/C}-F)) (= h^0(C,f_*(2K_{X/C})\otimes \oo_C(-L_1)))\neq0$, where $F$ is the general fibre of $f$ and, since $K_X = K_{X/C}$ is nef, that
$$(K_X \cdot (2K_X-F))\ge 0.$$
This gives the bound
$$\vol(X)\ge (K_X^2) \ge g-1.$$
\item[$b=0$] Also in this case we can assume that $\chi(\oo_X)\ge0$ and, as a consequence, that
$$\deg(f_*K_{X/\pp^1}) = \chi(\oo_X) + g-1 \ge g-1.$$
If $g\ge 6$,
$$\deg(f_*K_{X/\pp^1}) \ge 5.$$
This implies that $\deg(f_*K_X\otimes\oo_{\pp^1}(3)) \ge 0$ and, as a consequence of Grothendieck's theorem on vector bundles on $\pp^1$,
$$h^0(X,\oo_X(K_X-f^*\oo_C(-3)))\neq 0.$$
As before, we have assumed that $K_{X/\pp^1}= K_X + f^*\oo_C(2)$ is nef, so
$$((K_X + f^*\oo_C(2))\cdot (K_X-f^*\oo_C(-3)))\ge 0.$$
So, in this case
$$\vol(X)\ge (K_X^2) \ge 2g-2.$$
If $g\le 5,$ we simply use the trivial inequality $\vol(X)\ge 1$ to deduce
$$\vol(X) \ge g-4.$$
\end{description}
\endproof

Our result in this setting is an effective bound, depending only on the birational geometry of $X$, to guarantee the injectivity on the induced Frobenius map on the $H^1$'s.

\begin{thm}\label{injectivity}
Let $X$ be a smooth surface in characteristic $p>0$ and let $D$ be a big Cartier divisor $D$ on $X$.
Then, for all integers
$$m >m_0=\frac{2\vol(X)+9}{p-1},$$ 
the induced Frobenius map 
$$H^1(X,\oo_X(-mD)) \xrightarrow{F^*} H^1(X,\oo_X(-pmD))$$
is injective.
\\(If $\kappa(X)\neq 2$, the volume $\vol(X)=0$).
\end{thm}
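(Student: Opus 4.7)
The plan is to argue by contradiction, combining Theorem~\ref{kollar} with the genus bound of Lemma~\ref{volume_genus} to produce a covering family of rational curves on $X$ from the non-injectivity of Frobenius. Suppose that for some $m > m_0$ the Frobenius $F^*\colon H^1(X,\oo_X(-mD)) \to H^1(X,\oo_X(-pmD))$ has non-trivial kernel. I apply Theorem~\ref{kollar} with the divisor $mD$ in place of the theorem's $D$ (and the theorem's integer parameter set to $1$): hypothesis~(1) is then the non-injectivity above. For hypothesis~(2), I choose $C$ to be a general member of $|nA|$ for some ample $A$ and large $n$; since $D$ is big, $D\cdot A > 0$, and for $m > m_0$ we have $(p-1)m(D\cdot C) > (K_X\cdot C)$, while $C$ moves in a covering family of $X$. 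The theorem then yields, through every $x\in C$, a rational curve $C_x$ with
$$(p-1)m(D\cdot C_x) - (K_X\cdot C_x) \le 3.$$

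As $x$ varies in $C$ and $C$ varies in its covering family, the curves $\{C_x\}$ have bounded numerical class and cover $X$. Extracting an irreducible component of their Chow variety, I obtain a one-parameter family of rational curves covering $X$, which after blowing up and taking Stein factorisation gives a fibration $f\colon X'\to B$ whose general fibre $F$ is a possibly singular rational curve with $F^2=0$ and $(K_X\cdot F) = 2g-2$ by adjunction, where $g = p_a(F)$. Moreover $(D\cdot F)\ge 1$: writing the Zariski decomposition $D = P + N$ with $P$ nef and big, one has $P\cdot F > 0$ for general $F$ (since $F$ avoids the finite null locus of $P$) and $N\cdot F \ge 0$ (as $F$ moves). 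Hence
$$(p-1)m \le (p-1)m(D\cdot F) \le (K_X\cdot F) + 3 = 2g+1.$$

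It remains to bound $g$. If $X$ is of general type, Lemma~\ref{volume_genus} applied to $f$ gives $g \le \vol(X)+4$. Otherwise, the existence of a covering family of rational curves forces $X$ to be birationally ruled ($g=0$) or to carry a quasi-elliptic fibration with cuspidal rational fibres ($g=1$); in both subcases $\vol(X)=0$ and $g\le 1$. Therefore $m(p-1) \le 2g+1 \le 2\vol(X) + 9$ in every case, contradicting $m > m_0$. The principal technical obstacles are the verification of hypothesis~(2) of Theorem~\ref{kollar} for our chosen $C$, and the extraction of a genuine one-parameter covering family from $\{C_x\}$ so that the adjunction-based analysis of a general fibre is valid.
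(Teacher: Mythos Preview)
Your overall strategy---contradiction via Theorem~\ref{kollar} and then Lemma~\ref{volume_genus}---is the same as the paper's, but the order in which you invoke the two ingredients is reversed, and this creates a genuine gap.

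The paper first produces the fibration. From the non-injectivity one has $H^{1}(X,\oo_{X}(K_{X}+mD))\neq 0$, and Theorem~\ref{char_pat} then yields, after blowing up, a fibration $\pi\colon X'\to C$ whose general fibre $F$ is a singular rational curve of arithmetic genus $g$. Lemma~\ref{volume_genus} gives $g\le \vol(X)+4$, hence $(K_{X'}\cdot F)=2g-2\le 2\vol(X)+6$. Only now is Theorem~\ref{kollar} applied, with $C=F$: since $(D'\cdot F)\ge 1$ and $(K_{X'}\cdot F)\le 2\vol(X)+6$, hypothesis~(2) is immediate from $m>m_0$, and the algebraic-equivalence relation~(\ref{alg_rel}) forces $C_{x}=F$ (exactly as in Corollary~\ref{b-a-b-cor}), so the bound $(p-1)m(D'\cdot F)-(K_{X'}\cdot F)\le 3$ contradicts $(p-1)m>2\vol(X)+9$ directly.

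Your argument instead tries to verify hypothesis~(2) for a general curve $C\in|nA|$ with $A$ ample, and this is where it fails. The condition $(p-1)m(D\cdot C)>(K_{X}\cdot C)$ reduces to $(p-1)m(D\cdot A)>(K_{X}\cdot A)$, and the assumption $m>m_0=\frac{2\vol(X)+9}{p-1}$ does \emph{not} imply this: the ratio $(K_{X}\cdot A)/(D\cdot A)$ is not controlled by $\vol(X)$; it depends on $D$ and on the choice of $A$, and on a surface of general type it can be made arbitrarily large by choosing $D$ big but numerically ``small'' relative to $K_{X}$. Scaling $n$ does not help, since both sides scale the same way. So hypothesis~(2) of Theorem~\ref{kollar} is unverified, and without it you cannot produce the covering family of rational curves on which the rest of your argument depends. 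The paper avoids this entirely by using the fibre class $F$, whose intersection with $K_{X}$ is bounded \emph{a priori} via Lemma~\ref{volume_genus}.

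A secondary issue: even granting the covering rational curves $C_{x}$, your passage to a fibration and the subsequent claim that the general fibre $F$ still satisfies the bend-and-break bound requires justification (the bound is for $C_{x}\subset X$, while $F\subset X'$ after blow-ups), and your dichotomy ``$g\le 1$ if $X$ is not of general type'' needs more care, since you must exclude fibrations in singular rational curves of arithmetic genus $\ge 2$ on surfaces with $\kappa\le 1$. The paper sidesteps all of this by having the fibration in hand from the start.
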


\begin{rmk}
The previous result is trivial if $H^1(X,\oo_{X}(-D))=0$, Furthermore, combined with Corollary \ref{sz}, gives an effective version of Kawamata-Viehweg theorem (cf. Corollary \ref{finale}) in the case of big and nef divisors. Our hope is to generalise this strategy in order to deduce effective vanishing theorems also in higher dimension.
\end{rmk}

\proof
Assume, by contradiction, that
$$H^1(X,\oo_X(-\lceil m_0\rceil D)) \xrightarrow{F^*} H^1(X,\oo_X(-p\lceil m_0\rceil D))$$
has a nontrivial kernel. Then, after a sequence of blow-ups $f\colon X' \rightarrow X$, we can assume the existence of a (relatively minimal) fibration (possibly with singular general fibre) of arithmetic genus $g$
$$\pi\colon X' \rightarrow C.$$
We remark that we can reduce to prove our result on $X'$, since $D':=f^*D$ is a big divisor we have the following commutative diagram
$$\begin{xymatrix}
{
  H^1(X', \oo_{X'}(-\lceil m_0\rceil D')) \ar[r]^{F^*} \ar[d]_{\simeq} &  H^1(X', \oo_{X'}(-p\lceil m_0\rceil D') \ar[d]^{\simeq}\\
  H^1(X, \oo_X(-\lceil m_0\rceil D))  \ar[r]^{F^*}   &  H^1(X, \oo_X(-p\lceil m_0\rceil D))
 }
 \end{xymatrix},$$
where the vertical isomorphisms holds because of $R^1f_*\oo_{X'}=0.$
We can now apply Theorem \ref{kollar} to $\lceil m_0\rceil D'$: we can choose $C$ to be a general fibre $F$ of $\pi$, which certainly intersects positively $D',$ and we can use Lemma \ref{volume_genus} and obtain
\begin{equation}\label{>3}
(p-1)\lceil m_0\rceil (D'\cdot F) - (K_{X'}\cdot F)  \ge (p-1)\lceil m_0\rceil  -(2g-2) > 3.
\end{equation}
So we can apply Theorem \ref{kollar}: fix a point $x \in F$ and find a rational curve $C_x$ such that
$$(p-1)m_0(D\cdot C_x) - (K_X\cdot C_x) \le 3.$$
Notice that, by construction, $F=C_x$, because of (\ref{alg_rel}) in Theorem \ref{kollar}. But this is a contradiction, because of (\ref{>3}).
\endproof

We finally obtain our effective vanishing theorem.

\begin{corollary}\label{finale}
Let $X$ be a smooth surface in characteristic $p>0$ and let $D$ be a big and nef Cartier divisor $D$ on $X$.
Then, 
$$H^1(X, \oo_X(K_X+mD))=0$$
for all integers $m >m_0,$ 
where
\begin{itemize}
\item $m_0=\frac{3}{p-1}$ if $X$ is quasi-elliptic with $\kappa(X)=1$;
\item $m_0=\frac{2\vol(X)+9}{p-1}$ if $X$ is of general type.
\end{itemize}
\end{corollary}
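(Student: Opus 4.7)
The strategy is to rewrite the statement via Serre duality as a vanishing for $H^{1}(X,\oo_{X}(-mD))$, and then combine the Frobenius injectivity of Theorem \ref{injectivity} with the Szpiro asymptotic vanishing, iterating the Frobenius pullback to move into a range where Szpiro applies.

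Since $X$ is a smooth surface, Serre duality yields
\[
H^{1}(X,\oo_{X}(K_{X}+mD)) \;\cong\; H^{1}(X,\oo_{X}(-mD))^{\vee},
\]
so it suffices to prove $H^{1}(X,\oo_{X}(-mD))=0$. For $X$ of general type with $m>m_{0}:=(2\vol(X)+9)/(p-1)$, Theorem \ref{injectivity} gives the injectivity of $F^{*}\colon H^{1}(X,\oo_{X}(-mD)) \to H^{1}(X,\oo_{X}(-pmD))$. Since $p\geq 2$ implies $p^{e}m>m_{0}$ for every $e\geq 0$, every iterate of the Frobenius pullback is also injective, yielding an embedding
\[
H^{1}(X,\oo_{X}(-mD)) \hookrightarrow H^{1}(X,\oo_{X}(-p^{e}mD)).
\]
As $D$ (hence $p^{e}mD$) is big and nef, Szpiro's asymptotic vanishing recalled just before Corollary \ref{sz} forces the right-hand side to vanish for $e \gg 0$, so $H^{1}(X,\oo_{X}(-mD))=0$.

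For $X$ quasi-elliptic with $\kappa(X)=1$ we rerun the bend-and-break step from the proof of Theorem \ref{injectivity} using the natural quasi-elliptic Iitaka fibration $\pi\colon X\to C$, rather than invoking Lemma \ref{volume_genus} (which assumes general type and would only give the weaker bound $9/(p-1)$). The general fibre $F$ is a cuspidal rational curve of arithmetic genus $g=1$: since $F^{2}=0$, adjunction gives $(K_{X}\cdot F)=2g-2=0$, while bigness of $D$ combined with Hodge index (an equality $(D\cdot F)=0$ together with $F^{2}=0$ would force $D^{2}\leq 0$) yields $(D\cdot F)\geq 1$. Thus for $m>3/(p-1)$,
\[
(p-1)m(D\cdot F)-(K_{X}\cdot F) \;\geq\; (p-1)m > 3.
\]
If the Frobenius pullback were not injective, Theorem \ref{kollar} applied to $F$ would produce, through any $x\in F$, a rational curve $C_{x}$ with $(p-1)m(D\cdot C_{x})-(K_{X}\cdot C_{x})\leq 3$, while the algebraic equivalence (\ref{alg_rel}) forces $C_{x}=F$, contradicting the displayed strict inequality. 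Iteration and Szpiro conclude as in the previous paragraph.

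The main delicate point is the quasi-elliptic case, where one must check that the intersection numbers $(K_{X}\cdot F)=0$ and $(D\cdot F)\geq 1$ hold on $X$ itself, not merely on its relatively minimal model, so that the sharp bound $3/(p-1)$ is attained: the first follows because a general fibre of the quasi-elliptic fibration avoids every exceptional $(-1)$-curve arising from non-minimality, and the second is the Hodge-index observation above. Once these are in place, the bend-and-break step goes through verbatim with $g=1$, and the Serre-duality, iteration and Szpiro wrapping are formal.
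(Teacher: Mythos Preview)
Your proof is correct and follows essentially the same approach as the paper. The paper's own proof is a two-sentence sketch: for general type it invokes Theorem \ref{injectivity}, and for quasi-elliptic it notes $(K_{X}\cdot F)=0$; your proposal makes explicit the Serre duality, the Frobenius iteration, and the appeal to Szpiro's asymptotic vanishing that the paper leaves implicit (though alluded to in the remark preceding Theorem \ref{injectivity}), and carefully justifies the intersection numbers in the quasi-elliptic case.
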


\proof
For surfaces of general type, one simply applies the previous result. For quasi-elliptic surfaces, a better bound can be obtained, since, in this case, $(K_X\cdot F)=0$, where $F$ is the general fibre.

\endproof

\bibliographystyle{alpha}
\bibliography{./bibliografia}

\end{document}